\begin{document}

\theoremstyle{plain}

\newtheorem{thm}{Theorem}[section]

\newtheorem{lem}[thm]{Lemma}
\newtheorem{Problem B}[thm]{Problem B}

\newtheorem{pro}[thm]{Proposition}
\newtheorem{conj}[thm]{Conjecture}
\newtheorem{cor}[thm]{Corollary}
\newtheorem{que}[thm]{Question}
\newtheorem{prob}[thm]{Problem}
\newtheorem{rem}[thm]{Remark}
\newtheorem{defi}[thm]{Definition}
\newtheorem{cond}[thm]{Condition}

\newtheorem*{thmA}{Theorem A}
\newtheorem*{thmB}{Theorem B}
\newtheorem*{corB}{Corollary B}
\newtheorem*{thmC}{Theorem C}
\newtheorem*{thmD}{Theorem D}
\newtheorem*{thmE}{Theorem E}
 
\newtheorem*{thmAcl}{Main Theorem$^{*}$}
\newtheorem*{thmBcl}{Theorem B$^{*}$}
\newcommand{\dd}{\mathrm{d}}

\newtheorem{thml}{Theorem}
\renewcommand*{\thethml}{\Alph{thml}}   
\newtheorem{conjl}[thml]{Conjecture}
\newtheorem{condl}[thml]{Condition}

\newcommand{\Maxn}{\operatorname{Max_{\textbf{N}}}}
\newcommand{\Syl}{\operatorname{Syl}}
\newcommand{\Lin}{\operatorname{Lin}}
\newcommand{\U}{\mathbf{U}}
\newcommand{\R}{\mathbf{R}}
\newcommand{\dl}{\operatorname{dl}}
\newcommand{\Con}{\operatorname{Con}}
\newcommand{\cl}{\operatorname{cl}}
\newcommand{\Stab}{\operatorname{Stab}}
\newcommand{\Aut}{\operatorname{Aut}}
\newcommand{\Ker}{\operatorname{Ker}}
\newcommand{\InnDiag}{\operatorname{InnDiag}}
\newcommand{\fl}{\operatorname{fl}}
\newcommand{\Irr}{\operatorname{Irr}}
\newcommand{\FF}{\mathbb{F}}
\newcommand{\EE}{\mathbb{E}}
\newcommand{\normal}{\trianglelefteq}
\newcommand{\sn}{\normal\normal}
\newcommand{\Bl}{\mathrm{Bl}}
\newcommand{\NN}{\mathbb{N}}
\newcommand{\N}{\mathbf{N}}
\newcommand{\bfC}{\mathbf{C}}
\newcommand{\bfO}{\mathbf{O}}
\newcommand{\bfF}{\mathbf{F}}
\def\GGG{{\mathcal G}}
\def\HHH{{\mathcal H}}
\def\HH{{\mathcal H}}
\def\irra#1#2{{\rm Irr}_{#1}(#2)}

\renewcommand{\labelenumi}{\upshape (\roman{enumi})}

\newcommand{\PSL}{\operatorname{PSL}}
\newcommand{\PSU}{\operatorname{PSU}}
\newcommand{\alt}{\operatorname{Alt}}

\providecommand{\V}{\mathrm{V}}
\providecommand{\E}{\mathrm{E}}
\providecommand{\ir}{\mathrm{Irm_{rv}}}
\providecommand{\Irrr}{\mathrm{Irm_{rv}}}
\providecommand{\re}{\mathrm{Re}}

\numberwithin{equation}{section}
\def\irrp#1{{\rm Irr}_{p'}(#1)}

\def\ibrrp#1{{\rm IBr}_{\Bbb R, p'}(#1)}
\def\C{{\mathbb C}}
\def\Q{{\mathbb Q}}
\def\irr#1{{\rm Irr}(#1)}
\def\irrp#1{{\rm Irr}_{p^\prime}(#1)}
\def\irrq#1{{\rm Irr}_{q^\prime}(#1)}
\def \c#1{{\cal #1}}
\def \aut#1{{\rm Aut}(#1)}
\def\cent#1#2{{\bf C}_{#1}(#2)}
\def\norm#1#2{{\bf N}_{#1}(#2)}
\def\zent#1{{\bf Z}(#1)}
\def\syl#1#2{{\rm Syl}_#1(#2)}
\def\normal{\triangleleft\,}
\def\oh#1#2{{\bf O}_{#1}(#2)}
\def\Oh#1#2{{\bf O}^{#1}(#2)}
\def\det#1{{\rm det}(#1)}
\def\gal#1{{\rm Gal}(#1)}
\def\ker#1{{\rm ker}(#1)}
\def\normalm#1#2{{\bf N}_{#1}(#2)}
\def\alt#1{{\rm Alt}(#1)}
\def\iitem#1{\goodbreak\par\noindent{\bf #1}}
   \def \mod#1{\, {\rm mod} \, #1 \, }
\def\sbs{\subseteq}

\def\gc{{\bf GC}}
\def\ngc{{non-{\bf GC}}}
\def\ngcs{{non-{\bf GC}$^*$}}
\newcommand{\notd}{{\!\not{|}}}

\newcommand{\Z}{\mathbf{Z}}
\newcommand{\Out}{{\mathrm {Out}}}
\newcommand{\Mult}{{\mathrm {Mult}}}
\newcommand{\Inn}{{\mathrm {Inn}}}
\newcommand{\IBR}{{\mathrm {IBr}}}
\newcommand{\IBRL}{{\mathrm {IBr}}_{\ell}}
\newcommand{\IBRP}{{\mathrm {IBr}}_{p}}
\newcommand{\cd}{\mathrm{cd}}
\newcommand{\ord}{{\mathrm {ord}}}
\def\id{\mathop{\mathrm{ id}}\nolimits}
\renewcommand{\Im}{{\mathrm {Im}}}
\newcommand{\Ind}{{\mathrm {Ind}}}
\newcommand{\diag}{{\mathrm {diag}}}
\newcommand{\soc}{{\mathrm {soc}}}
\newcommand{\End}{{\mathrm {End}}}
\newcommand{\sol}{{\mathrm {sol}}}
\newcommand{\Hom}{{\mathrm {Hom}}}
\newcommand{\Mor}{{\mathrm {Mor}}}
\newcommand{\Mat}{{\mathrm {Mat}}}
\def\rank{\mathop{\mathrm{ rank}}\nolimits}
\newcommand{\Tr}{{\mathrm {Tr}}}
\newcommand{\tr}{{\mathrm {tr}}}
\newcommand{\Gal}{{\rm Gal}}
\newcommand{\Spec}{{\mathrm {Spec}}}
\newcommand{\ad}{{\mathrm {ad}}}
\newcommand{\Sym}{{\mathrm {Sym}}}
\newcommand{\Char}{{\mathrm {Char}}}
\newcommand{\pr}{{\mathrm {pr}}}
\newcommand{\rad}{{\mathrm {rad}}}
\newcommand{\abel}{{\mathrm {abel}}}
\newcommand{\PGL}{{\mathrm {PGL}}}
\newcommand{\PCSp}{{\mathrm {PCSp}}}
\newcommand{\PGU}{{\mathrm {PGU}}}
\newcommand{\codim}{{\mathrm {codim}}}
\newcommand{\ind}{{\mathrm {ind}}}
\newcommand{\Res}{{\mathrm {Res}}}
\newcommand{\Lie}{{\mathrm {Lie}}}
\newcommand{\Ext}{{\mathrm {Ext}}}
\newcommand{\Alt}{{\mathrm {Alt}}}
\newcommand{\AAA}{{\sf A}}
\newcommand{\SSS}{{\sf S}}
\newcommand{\DDD}{{\sf D}}
\newcommand{\QQQ}{{\sf Q}}
\newcommand{\CCC}{{\sf C}}
\newcommand{\SL}{{\mathrm {SL}}}
\newcommand{\Sp}{{\mathrm {Sp}}}
\newcommand{\PSp}{{\mathrm {PSp}}}
\newcommand{\SU}{{\mathrm {SU}}}
\newcommand{\GL}{{\mathrm {GL}}}
\newcommand{\GU}{{\mathrm {GU}}}
\newcommand{\Spin}{{\mathrm {Spin}}}
\newcommand{\CC}{{\mathbb C}}
\newcommand{\CB}{{\mathbf C}}
\newcommand{\RR}{{\mathbb R}}
\newcommand{\QQ}{{\mathbb Q}}
\newcommand{\ZZ}{{\mathbb Z}}
\newcommand{\bfN}{{\mathbf N}}
\newcommand{\bfZ}{{\mathbf Z}}
\newcommand{\PP}{{\mathbb P}}
\newcommand{\cG}{{\mathcal G}}
\newcommand{\OO}{\mathcal O}
\newcommand{\cH}{{\mathcal H}}
\newcommand{\cQ}{{\mathcal Q}}
\newcommand{\GA}{{\mathfrak G}}
\newcommand{\cT}{{\mathcal T}}
\newcommand{\cL}{{\mathcal L}}
\newcommand{\IBr}{\mathrm{IBr}}
\newcommand{\cS}{{\mathcal S}}
\newcommand{\cR}{{\mathcal R}}
\newcommand{\GCD}{\GC^{*}}
\newcommand{\TCD}{\TC^{*}}
\newcommand{\FD}{F^{*}}
\newcommand{\GD}{G^{*}}
\newcommand{\HD}{H^{*}}
\newcommand{\GCF}{\GC^{F}}
\newcommand{\TCF}{\TC^{F}}
\newcommand{\PCF}{\PC^{F}}
\newcommand{\GCDF}{(\GC^{*})^{F^{*}}}
\newcommand{\RGTT}{R^{\GC}_{\TC}(\theta)}
\newcommand{\RGTA}{R^{\GC}_{\TC}(1)}
\newcommand{\Om}{\Omega}
\newcommand{\eps}{\epsilon}
\newcommand{\varep}{\varepsilon}
\newcommand{\al}{\alpha}
\newcommand{\chis}{\chi_{s}}
\newcommand{\sigmad}{\sigma^{*}}
\newcommand{\PA}{\boldsymbol{\alpha}}
\newcommand{\gam}{\gamma}
\newcommand{\lam}{\lambda}
\newcommand{\la}{\langle}
\newcommand{\genf}{F^*}
\newcommand{\ra}{\rangle}
\newcommand{\hs}{\hat{s}}
\newcommand{\htt}{\hat{t}}
\newcommand{\tG}{\hat G}
\newcommand{\St}{\mathsf {St}}
\newcommand{\bfs}{\boldsymbol{s}}
\newcommand{\bfl}{\boldsymbol{\lambda}}
\newcommand{\tn}{\hspace{0.5mm}^{t}\hspace*{-0.2mm}}
\newcommand{\ta}{\hspace{0.5mm}^{2}\hspace*{-0.2mm}}
\newcommand{\tb}{\hspace{0.5mm}^{3}\hspace*{-0.2mm}}
\def\skipa{\vspace{-1.5mm} & \vspace{-1.5mm} & \vspace{-1.5mm}\\}
\newcommand{\tw}[1]{{}^#1\!}
\newcommand{\Irrg}[1]{\Irr_{p',\sigma}(#1)}
\renewcommand{\mod}{\bmod \,}

\newcommand{\bG}{\mathbf{G}}
\newcommand{\bbG}{\mathbb{G}}
\newcommand{\bg}{\mathbf}
\newcommand{\type}{\operatorname}
\newcommand{\wt}{\widetilde}
\newcommand{\sym}{\mathfrak{S}}

\marginparsep-0.5cm

\newcommand{\mandicomment}{\textcolor{teal}}

\renewcommand{\thefootnote}{\fnsymbol{footnote}}
\footnotesep6.5pt
\title{On almost $p$-rational characters in principal blocks}
\author[A. Mar\'oti, J. M. Mart\'inez, A. A. Schaeffer Fry, C. Vallejo]{Attila Mar\' oti, J. Miquel Mart\'inez, A. A. Schaeffer Fry and Carolina Vallejo}
\keywords{Galois action on characters, principal blocks, height zero character}
\subjclass{$20$C$15$ ($20$C$20$)}

\thanks{The first-named author was supported by the National Research, Development
and Innovation Office (NKFIH) Grant No.~K138596, No.~K132951 and Grant
No.~K138828. The second-named author is funded by the national project PRIN 2022- 2022PSTWLB - Group Theory and Applications - CUP B53D23009410006. The third-named author gratefully acknowledges support from the National Science
Foundation, Award No. DMS-2100912, and her former institution, Metropolitan
State University of Denver, which holds the award and allows her to serve as PI. The fourth-named author, as part of the GNSAGA, is grateful for the support of the Istituto Nazionale di
Alta Matematica (INdAM). She also acknowledges support from the Rita Levi-Montalcini 2019 programme. Part of this work was carried out while the second-named author visited the University of Florence during the fall of 2023, funded by a {\em Borsa Ferran Sunyer i Balaguer} 2023 from {\em l'Institut d'Estudis Catalans}. He thanks {\em l'Institut} for the support and the {\em Dipartamento di Matematica e Informatica `Ulisse Dini'} for the warm hospitality. The authors thank Gunter Malle and the anonymous reviewers for a thorough reading of earlier versions of this manuscript and for suggestions that helped improve the exposition.}

\maketitle

\begin{abstract}
Let $p$ be a prime. In this paper we provide a lower bound for the number of almost $p$-rational characters of degree coprime to $p$ in the principal $p$-block of a finite group of order divisible by $p$. We further describe the $p$-local structure of the groups for which the above-mentioned bound is sharp. 
\end{abstract}

\section{Introduction}

Let $G$ be a finite group and let $p$ be a prime dividing $|G|$. For a positive integer $k$, let $\xi_k$ be a primitive $k$th root of unity. We denote by $\mathbb{Q}_k=\mathbb{Q}(\xi_k)$ the $k$th cyclotomic extension of $\mathbb{Q}$.
Let $\sigma \in {\rm Gal}(\QQ^{\mathrm{ab}}/\QQ)$ be the Galois automorphism that sends $p$-power roots of unity to their $1+p$ power and fixes roots of unity of order coprime to $p$ (by abusing notation, we use $\sigma$ to denote the restriction to any cyclotomic extension of $\mathbb{Q}$). 
Although the definition of $\sigma$ depends on the prime $p$, it will be clear from context throughout the article. 
Following \cite{Hun-Mal-Mar22}, we say that a character $\chi$ is almost $p$-rational if all the values of $\chi$ lie in some cyclotomic extension $\QQ_{pm}$ where $(p,m)=1$. 
Write $|G|=p^a n$ with $(p,n)=1$ and $a$ is a positive integer. If $p$ is odd, then ${\rm Gal}(\QQ_{|G|}/\QQ_{pn})=\langle \sigma \rangle$. In particular, a character $\chi$ of $G$ is almost $p$-rational if, and only if, $\chi$ is $\sigma$-invariant. 
If $p=2$, then $\QQ_{2n}=\QQ_n$, so almost $2$-rational characters are $2$-rational. As ${\rm Gal}(\QQ_{|G|}/\QQ_{n})$ is not generally cyclic, we cannot characterize $2$-rational characters as those invariant under the action of $\langle \sigma \rangle$ (for instance, dihedral groups of order $2^k>8$ have irreducible characters that are $\sigma$-invariant but not 2-rational). However, when dealing with irreducible characters $\chi$ of odd degree, it was first noticed in \cite[Theorem 2.5]{Val23} that
$\chi$ is $2$-rational if, and only if, $\chi$ is $\sigma$-invariant (or equivalently $\langle \sigma\rangle$-fixed). 

Let $G$ be a finite group and let $p$ be a prime. For a $p$-block $B$ of $G$, let $k(B)$ denote the number of complex irreducible characters in $B$. In \cite{Het-Kul00}, H\'ethelyi and K\"ulshammer conjecture that if $k(B)>1$ then $$k(B)\geq2\sqrt{p-1}.$$ This conjecture remains open even for solvable groups.
Recently, Hung and the third-named author of this article have shown that the H\' ethelyi--K\"ulshammer conjecture holds for principal blocks in \cite{Hun-Sch23}. 
Our aim in this article is to show that a much stronger bound is true for principal blocks. More precisely, we show that $2\sqrt{p-1}$ is also a lower bound for the
number of almost $p$-rational characters of degree coprime to $p$ lying in the principal $p$-block of groups of order divisible by $p$. From the discussion above, this means we are interested in studying 
$$k_{0, \sigma}(B_0(G)),$$
 the size of the set $\displaystyle \Irr_{p', \sigma}(B_0(G))$
of irreducible characters of degree coprime to $p$ in the principal $p$-block of $G$ fixed by the action of $\langle \sigma \rangle$.
We care to remark that the numbers $k_{0, \sigma}(B_0(G))$ have been recently shown to influence the number of generators of the Sylow $p$-subgroups in \cite{Nav-Tie19, Riz-Sch-Val20, Val23}. We denote by $\Phi(G)$ the Frattini subgroup of a group $G$ and by $\lceil x \rceil \in \ZZ$ the ceiling of a real number $x$.
 Our main result is the following.

\begin{thml}\label{thm:maintheorem}
Let $G$ be a finite group, $p$ a prime and $P\in\Syl_p(G)$. If $P>1$, then 
$$\displaystyle k_{0, \sigma}(B_0(G))\geq  \lceil 2\sqrt{p-1} \rceil\, $$
with equality if, and only if, the following conditions hold:
\begin{enumerate}
\item $\lceil 2\sqrt{p-1}\rceil=e + \frac {p-1} e$ for some divisor $e$ of $p-1$;
\item $P$ is cyclic; and 
\item the local quotient group $\norm G P/\Phi(P)\oh{p'}{\norm G P}$ is isomorphic to one of the following Frobenius groups $\displaystyle\mathsf{C}_p\rtimes \mathsf{C}_{e}$ or $\displaystyle \mathsf{C}_p\rtimes \mathsf{C}_{\frac{p-1} e}$.
\end{enumerate}
\end{thml}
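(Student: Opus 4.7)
The approach is a two-stage reduction: first from $G$ to $N := \norm G P$ via the Galois-equivariant McKay correspondence on principal blocks, then from $N$ to $\bar N := N/\Phi(P)\bfO_{p'}(N)$ via inflation, followed by a Frobenius-group count combined with AM--GM, and an equality analysis that runs the chain in reverse. For the first reduction I invoke the Galois-refined McKay correspondence for principal blocks at $\sigma$, available by combining results such as \cite{Nav-Tie19, Riz-Sch-Val20, Val23} with the proof of Navarro's refined conjecture for principal blocks, to obtain
\[
k_{0,\sigma}(B_0(G)) = k_{0,\sigma}(B_0(N)).
\]
By Schur--Zassenhaus, $N = P \rtimes H_0$ with $H_0$ a $p'$-group, so $\bar N \cong V \rtimes H$ where $V := P/\Phi(P)$ is elementary abelian of rank $d$ and $H$ is a $p'$-group acting faithfully on $V$ (since $\bfO_{p'}(\bar N) = 1$). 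Inflation embeds $\Irr(B_0(\bar N))$ into $\Irr(B_0(N))$; moreover every $\chi \in \Irr(\bar N)$ is of $p'$-degree (by Clifford, $\chi(1) = [H:\cent H \lambda]\,\psi(1)$ with both factors dividing $|H|$) and is $\sigma$-fixed (the map $\sigma:\zeta\mapsto\zeta^{1+p}$ is the identity on $p$-th roots of unity, hence on $\Irr(V)$ since $V$ is elementary abelian; characters of the $p'$-group $\cent H \lambda$ are automatically $\sigma$-invariant; and the canonical extension $\hat\lambda(vh)=\lambda(v)$ inherits $\sigma$-invariance from $\lambda$). Therefore
\[
k_{0,\sigma}(B_0(G)) \geq |\Irr(B_0(\bar N))|.
\]

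The combinatorial core is to bound $|\Irr(B_0(\bar N))|$ from below. By the class equation, $k(\bar N) = \sum_{[\lambda] \in \Irr(V)/H} k(\cent H \lambda)$, and the extremal case is $d = 1$: then $H$ embeds faithfully in $\Aut(V) \cong \mathsf{C}_{p-1}$ and so is cyclic of order $e \mid p-1$; the group $\bar N \cong \mathsf{C}_p \rtimes \mathsf{C}_e$ is Frobenius, all $e + (p-1)/e$ of its ordinary characters lie in the (unique) principal block, and AM--GM yields $e + (p-1)/e \geq 2\sqrt{p-1}$, so, being an integer, $\geq \lceil 2\sqrt{p-1}\rceil$. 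The main technical obstacle is to show that when $d \geq 2$ or $H$ acts non-freely, $|\Irr(B_0(\bar N))|$ still exceeds $\lceil 2\sqrt{p-1}\rceil$; this requires careful block-theoretic control of $V \rtimes H$, since in higher rank not every character of $\bar N$ need lie in $B_0(\bar N)$. The guiding observation is that higher rank creates more $H$-orbits on $\Irr(V)$ and non-free actions produce nontrivial stabilisers each contributing $k(\cent H \lambda) \geq 1$, so one must count these contributions inside $B_0(\bar N)$ and compare against the minimum of $d + (p-1)/d$ over divisors of $p-1$.

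For the characterisation of equality one runs the chain in reverse. Tightness at the combinatorial step forces $V \cong \mathsf{C}_p$ (so $d = 1$, hence $P$ cyclic, giving (ii)), $H$ cyclic of order $e$ (or equivalently $(p-1)/e$, whence the two isomorphism types in (iii) reflecting the symmetry $e \leftrightarrow (p-1)/e$), and the equality $e + (p-1)/e = \lceil 2\sqrt{p-1}\rceil$, which is (i). Tightness at the inflation step requires every $\sigma$-invariant $p'$-degree character of $B_0(N)$ to come from $\bar N$ by inflation; when $P$ is cyclic this is automatic, because for odd $p$ the Sylow $p$-subgroup of $\Aut(P)$ is cyclic generated by $\sigma$'s action and intersects the image of $H_0$ trivially (as $H_0$ is a $p'$-group), so the $\sigma$-fixed characters of $N$ are precisely those with $\Phi(P)$ in the kernel; the $p=2$ case is handled directly using that $\Aut(P)$ is then a $2$-group. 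Conversely, assuming (i)--(iii), a direct verification in the Frobenius group $\mathsf{C}_p \rtimes \mathsf{C}_e$ (which has exactly $e + (p-1)/e$ irreducible characters, all $p'$-degree, all $\sigma$-invariant, and all in $B_0$) gives $k_{0,\sigma}(B_0(G)) = \lceil 2\sqrt{p-1}\rceil$.
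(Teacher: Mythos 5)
Your proposal breaks at the very first step: the identity
\[
k_{0,\sigma}(B_0(G)) = k_{0,\sigma}(B_0(\norm G P))
\]
is \emph{not} a theorem. It is precisely the statement of the Alperin--McKay--Navarro conjecture (in the $\langle\sigma\rangle$-equivariant form) specialised to the principal block, and it is open. The references you cite (\cite{Nav-Tie19, Riz-Sch-Val20, Val23}) prove various consequences of, and partial results toward, that conjecture, but none of them establishes such an equivariant bijection between $\Irr_{p'}(B_0(G))$ and $\Irr_{p'}(B_0(\norm G P))$ for arbitrary $G$. The paper is explicit about this: the Introduction states that the conclusion of Theorem~A ``can be proven to be a consequence of the Alperin--McKay--Navarro conjecture\dots which as of the writing of this article is quite far from being proved,'' and Section~\ref{sec:final} carries out exactly the reduction you propose as a conditional remark (via Proposition~\ref{prop:AMN} and the subsequent paragraph), emphasising that it is not the proof. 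The entire technical content of the paper --- the reduction to a unique minimal normal subgroup, the CFSG-dependent Theorem~\ref{thm:simple groups} for noncyclic Sylow $p$-subgroups, the cyclic-defect analysis of Section~\ref{sec:cyclicSylow} using the verified inductive Alperin--McKay conditions, and the improved conjugacy-class bound of Theorem~\ref{rank2} --- exists precisely to circumvent this unproven reduction.

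Once the local group is in hand, the rest of your sketch is essentially the paper's: by Lemma~\ref{lem:normalsylow}, $k_{0,\sigma}(B_0(\norm G P))$ equals $k(\norm G P / \Phi(P)\oh{p'}{\norm G P})$ (an equality, so the inflation inequality you state is not even needed), and the numerical lower bound and equality analysis come from \cite{Mar16} and Theorem~\ref{rank2}. You also needn't worry that ``not every character of $\bar N$ need lie in $B_0(\bar N)$'': with $\oh{p'}{\bar N}=1$ and a normal Sylow $p$-subgroup, $\bar N$ has a unique $p$-block. But none of this rescues the argument, because the passage from $G$ to $\norm G P$ is missing. To make your approach work you would either have to prove AMN for principal blocks (which would be a far stronger result than the theorem itself), or replace that step with the global-to-local machinery the paper actually develops.
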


The conclusion of Theorem A can be proven to be a consequence of the Alperin--McKay--Navarro conjecture \cite[Conjecture B]{Nav04}.
In this sense, Theorem A provides new evidence of the validity of this conjecture, which as of the writing of this article is quite far from being proved. (For example, it has not yet  been reduced to a problem on simple groups.) We give further details in Section \ref{sec:AMN}, where we also speculate about the possibility of extending the inequality in Theorem \ref{thm:maintheorem} to arbitrary blocks.
 
 \smallskip
 
 Given any finite group $G$ of order divisible by a prime $p$, it was proven in \cite{Mar16}, by the first-named author of this article, that the number $k(G)$ 
of conjugacy classes of $G$ (or equivalently the number of irreducible complex characters of $G$) is bounded below by $2\sqrt{p-1}$. 
 Moreover, the results of  \cite{Mar16} show that
the bound $k(G)=2\sqrt{p-1}$ is attained only if $a=\sqrt{p-1}\in \mathbb N$ and $G$ is isomorphic to the Frobenius group ${\sf C}_p\rtimes {\sf C}_a$. 
Our proof of the equality in Theorem \ref{thm:maintheorem} ultimately depends on the 
analysis of the equality $k(G)=\lceil 2 \sqrt{p-1}\rceil$ in groups $G$ with a nontrivial normal elementary abelian Sylow $p$-subgroup. 
We carry out this analysis, improving upon \cite[Theorem 1.1]{Mar16}, in Section \ref{sec:conjugacyclasses}.
Additionally, our proof of Theorem \ref{thm:maintheorem} depends on the following statements regarding mostly finite simple groups.

\begin{thml}\label{thm:simple groups}
Let $S$ be a finite nonabelian simple group of order divisible by a prime $p\geq 5$. Let  $A=\Aut(S)$. 
\begin{enumerate}
\item There is some $X$-invariant $1_S\neq \theta\in\Irrg{B_0(S)}$, where $X/S\in\Syl_p(A/S)$.
\item If $S$ has non-cyclic Sylow $p$-subgroups and $S\leq T\leq A$ with $p\nmid|T:S|$ then
\begin{enumerate}
\item $k_{0,\sigma}(B_0(T))>\lceil 2\sqrt{p-1} \rceil$,
\item the number of $T$-orbits on $\Irrg{B_0(S)}$ is at least $2(p-1)^{1/4}$.
\end{enumerate}
\item Suppose that $S$ has cyclic Sylow $p$-subgroups. Write $N=S^t$ with $t\geq 2$ and suppose that $N\normal H$ with $p\nmid|H:N|$. 
Then $$k_{0,\sigma}(B_0(H))=k_{0,\sigma}(B_0(\norm H P))>\lceil2\sqrt{p-1} \rceil$$ where $P\in \Syl_{p}({H})$.
\end{enumerate}
\end{thml}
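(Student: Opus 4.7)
My plan is to prove Theorem B via the Classification of Finite Simple Groups, treating alternating groups, sporadic groups, and groups of Lie type in turn.

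For \textbf{part (1)}, I would exhibit the required non-trivial $X$-invariant $\theta \in \Irrg{B_0(S)}$ directly. For alternating groups $A_n$ with $p \geq 5$, the Sylow $p$-subgroup of $\Out(A_n)$ is trivial, so any non-trivial $\sigma$-fixed $p'$-degree character in $B_0(A_n)$ works; such characters are identified via the Nakayama conjecture (for the principal block) together with the Galois action on partition-indexed characters of $\sym_n$. For sporadic groups, direct verification in the ATLAS suffices. For Lie-type groups, I would use the explicit description of the Galois action on principal block characters (e.g.\ unipotent characters and their Jordan-decomposition extensions, analysed in work of Srinivasan--Vinroot and Malle), together with the known action of outer automorphisms on these characters.

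For \textbf{part (2)}, in the non-cyclic Sylow case, I would combine lower bounds on $|\Irr_{p'}(B_0(S))|$---larger for non-cyclic defect by Brauer's height zero theorem and the Malle--Navarro bounds specialised to principal blocks---with estimates on Galois orbit sizes to deduce the strict inequality $k_{0,\sigma}(B_0(T)) > \lceil 2\sqrt{p-1}\rceil$. The orbit bound $\geq 2(p-1)^{1/4}$ then follows by dividing the total count of $\sigma$-fixed characters by the maximal $T$-orbit length, which is controlled by $|T:S|$.

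For \textbf{part (3)}, with $N = S^t$, $t \geq 2$, $p \nmid |H:N|$, I would first establish $k_{0,\sigma}(B_0(H)) = k_{0,\sigma}(B_0(\norm H P))$ as an instance of the Alperin--McKay--Navarro (AMN) conjecture. Since $S$ has cyclic Sylow, $B_0(S)$ has cyclic defect, where AMN is known; the tensor-product structure $B_0(N) = B_0(S)^{\otimes t}$ transports the AMN bijection to $B_0(N)$ equivariantly under both $\sigma$ and $H/N$, and the $\sigma$-equivariant Harris--Knörr correspondence (using $p \nmid |H:N|$) transfers AMN from $N$ to $H$. For the strict inequality, part (1) applied to $S$ gives $|\Irrg{B_0(S)}| \geq 2$, hence $|\Irrg{B_0(N)}| \geq 2^t$; bounding $H/N$-orbit lengths on $\Irrg{B_0(N)}$ (again using $p \nmid |H:N|$) then shows that the number of orbits---a lower bound for $k_{0,\sigma}(B_0(H))$---exceeds $\lceil 2\sqrt{p-1}\rceil$. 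The main obstacle will be the detailed case analysis for groups of Lie type in part (1), where the Galois action on principal block characters in cross characteristic is arithmetic-sensitive (depending on the order of $p$ modulo $q$, the Weyl group, and cyclotomic factorisations), together with the uniform verification of AMN for $B_0(S)$ across these cases in part (3).
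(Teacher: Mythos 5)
Your overall case analysis for parts (1) and (2) is broadly in the spirit of the paper's proof (splitting into alternating, sporadic, and Lie type, and in Lie type using unipotent and semisimple characters together with Galois-action control), though you miss a useful simplification: when $p\geq 5$ is odd, \emph{every} character of an alternating group is almost $p$-rational, essentially because $\sigma$ has $p$-power order and a rational character of $\mathfrak{S}_n$ restricts to at most two constituents on $\mathfrak{A}_n$; this avoids the partition/Nakayama analysis entirely (the paper's Lemma~\ref{lem:altspor}). Similarly, in defining characteristic the paper applies Tiep--Zalesski to show all characters are $\sigma$-invariant, which again shortcuts the Galois bookkeeping.

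For part (3) there are two genuine gaps. First, the ``$\sigma$-equivariant Harris--Kn\"orr step'' you invoke to transfer the bijection from $N=S^t$ to $H$ is not actually available in the form you need: the issue is Clifford theory, since one must control not only which characters of $N$ lie under which characters of $H$, but also the full block character triple structure. The correct tool is the $\sim_b$-relation of Navarro--Sp\"ath, and what makes the transport work is that $B_0(S)$ satisfies the \emph{inductive} Alperin--McKay conditions (Koshitani--Sp\"ath), which is strictly stronger than knowing AMN holds for cyclic defect. One then builds a $\langle\sigma\rangle\times\norm{G}{P}$-equivariant bijection on $\Irr(B_0(N))$ with $(G_\theta,N,\theta)\sim_b(\norm{G}{P}_\theta,\norm{N}{P},\Omega(\theta))$, and uses Fong--Reynolds to obtain the count equality (Propositions~\ref{pro:triple isomorphism} and \ref{thm:simple groups ConjectureD} in the paper). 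Second, and more seriously, your argument for the strict inequality $>\lceil 2\sqrt{p-1}\rceil$ does not work: starting from $|\Irrg{B_0(N)}|\geq 2^t$ and dividing by $H/N$-orbit lengths gives no useful lower bound, since for $t=2$ you get only $4$ characters, and $|H:N|$ is unbounded, so the number of orbits could easily be as small as $2$. What the paper actually does after the count equality $k_{0,\sigma}(B_0(H))=k_{0,\sigma}(B_0(\norm{H}{P}))$ is to apply Lemma~\ref{lem:normalsylow} to reduce to $k\bigl(\norm{H}{P}/\Phi(P)\oh{p'}{\norm{H}{P}}\bigr)$, observe that $P/\Phi(P)$ is elementary abelian of rank $\geq t\geq 2$, and invoke the improved conjugacy class bound of Theorem~\ref{rank2} (which says $k(G)\geq 2\sqrt{p-1}+1$ when there is a non-cyclic elementary abelian minimal normal $p$-subgroup with $p'$-quotient). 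That last step is a key new ingredient that your sketch does not supply.
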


We note that Theorem \ref{thm:simple groups}(i) implies that there is a $\sigma$-invariant extension of $\theta$ to $X$, using \cite[Cors. 6.2 and 6.4]{Nav18}. With this in mind, we prove a stronger statement in the case that $S$ is a  simple group of Lie type in non-defining characteristic in Lemma \ref{lem:existsprincseriesunip} below, which might be useful for related future applications.

Notice also that Theorem \ref{thm:simple groups}(iii) states that the principal $p$-block of a group $H$ as described in its statement satisfies \cite[Conjecture D]{Isa-Nav02}. In \cite[Theorem 4.4]{Hun-Mal-Mar22}, the authors prove a weaker version of the statement of Theorem B(iii) by using the Classification of Finite Simple Groups (CFSG, for short) when proving that $2\sqrt{p-1}$
is a lower bound for the number of almost $p$-rational irreducible characters of degree coprime to $p$ in a group of order divisible by $p$. 
Remarkably, we prove Theorem \ref{thm:simple groups}(iii) without invoking the CFSG in Section \ref{sec:cyclicSylow}.

\smallskip

This paper is organized as follows. In Section \ref{sec:conjugacyclasses} we show that if $G$ is a finite group of order divisible by $p$ and $k(G)=\lceil 2\sqrt{p-1}\rceil$ then $G$ has cyclic Sylow $p$-subgroups. Section \ref{sec:princblockfolklore} contains known results on principal blocks. We prove Theorem \ref{thm:simple groups}(i) and (ii) in Section \ref{sec:simples} and Theorem \ref{thm:simple groups}(iii) in Section \ref{sec:cyclicSylow}. We complete the proof of Theorem \ref{thm:maintheorem} in  Section \ref{sec:main}. In Section \ref{sec:final} we discuss some related problems.

\section{An improved bound on the number of conjugacy classes}\label{sec:conjugacyclasses}
The aim of this section is to characterize the finite groups $G$ with an elementary abelian nontrivial Sylow $p$-subgroup 
such that $k(G)=\lceil 2\sqrt{p-1}\rceil$.  
 By \cite{Mar16}, the inequality $k(G)\geq \lceil 2\sqrt{p-1}\rceil$ always holds.
 We will show that $k(G)=\lceil 2\sqrt{p-1}\rceil$ implies that $|G|_p=p$, where $n_p$ is the $p$-part of the number $n$.
 In particular, $k(G)=a+\frac{p-1}a$ for some divisor $a$ of $p-1$, and $G$ is isomorphic to one of the Frobenius groups
 ${\sf C}_p\rtimes {\sf C}_a$ or  $\displaystyle {\sf C}_p\rtimes {\sf C}_{\frac{p-1}a}$.

 \subsection{Statement of results}

Let $G$ be a finite group and $p$ a prime. Let $k(G)$ be the number of conjugacy classes of $G$. For a normal subgroup $V$ in $G$, let $n(G,V)$ denote the number of orbits of $G$ on $V$ (acting by conjugation).
	
The purpose of this section is to prove the following. 	
	
\begin{thm}
\label{rank2}
If $G$ is a finite group having an elementary abelian minimal normal subgroup $V$ of $p$-rank at least $2$ and $|G/V|$ is not divisible by the prime $p$, then $k(G) \geq 2 \sqrt{p-1} +1$. 
\end{thm}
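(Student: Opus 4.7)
The plan is to begin with the standard reductions. By Schur--Zassenhaus, $G = V \rtimes H$ with $(|H|, p) = 1$; since $C_H(V) \trianglelefteq G$ and $k(G) \ge k(G/C_H(V))$ while $V$ remains a minimal normal subgroup of $G/C_H(V)$ of the same rank, one may further assume that $H$ acts faithfully and, by minimality of $V$, irreducibly on $V = \mathbb{F}_p^d$ with $d \ge 2$. The main tool will be the class formula
\[
k(G) \;=\; \sum_{[v]\in V/H} k(C_H(v)) \;=\; k(H) + \sum_{i=1}^r k(H_i),
\]
valid for any normal abelian subgroup with complement of coprime order (via standard Clifford theory together with Brauer's permutation lemma), where $v_1, \ldots, v_r$ represent the non-trivial $H$-orbits and $H_i := C_H(v_i) < H$ are their stabilizers. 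In particular, $k(G) \ge k(H) + r$.

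I would then split into three cases according to $|H|$ and the structure of $H$. First, if $|H| \le p-1$, then every non-trivial orbit has size at most $|H|$, so
\[
r \;\ge\; \frac{p^d-1}{|H|} \;\ge\; \frac{p^2-1}{p-1} \;=\; p+1,
\]
yielding $k(G) \ge p+2$; one verifies $p+2 \ge 2\sqrt{p-1}+1$ for all primes~$p$. Second, if $|H| \ge p+1$ and $H$ is cyclic, then Schur's lemma embeds $H$ into $\mathbb{F}_{p^d}^*$ (acting semi-regularly on $V \setminus \{0\}$), so
\[
k(G) \;=\; |H| \;+\; \frac{p^d-1}{|H|} \;\ge\; 2\sqrt{p^d-1} \;\ge\; 2\sqrt{p^2-1},
\]
and a direct numerical comparison confirms $2\sqrt{p^2-1} \ge 2\sqrt{p-1}+1$ for every prime $p$.

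The main obstacle will be the remaining case, $|H| \ge p+1$ with $H$ non-cyclic and (by Schur's lemma applied to the faithful irreducible action) non-abelian. Here $k(H)$ can be much smaller than $|H|$, so the naive bound $k(H)+r$ is too weak; the plan is to retain the full sum $\sum_i k(H_i)$ and use additional structural input. I would distinguish the subcase in which $H$ acts transitively on $V \setminus \{0\}$ (so $r = 1$), in which Hering's classification of affine $2$-transitive groups confines $H$ to a short list of families that can be examined one at a time, from the subcase $r \ge 2$, where a combination of AM-GM on the orbit sizes $|H|/|H_i|$ together with the Nagao-style inequality $k(H_i) \ge k(H)/[H:H_i]$ should close the gap. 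The delicate step will be ensuring that the $+1$ improvement over Maróti's bound $2\sqrt{p-1}$ emerges uniformly across the Hering families, as several of them come close to being sharp for the original bound.
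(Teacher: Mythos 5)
Your opening reductions are sound: passing to $G/\cent{H}{V}$, invoking Schur--Zassenhaus to write $G=V\rtimes H$ with $(|H|,p)=1$ and $H$ faithful and irreducible on $V\cong\mathbb{F}_p^{\,d}$ ($d\geq 2$), and then using the class-counting formula for coprime split extensions are all correct. (One small care: the exact formula $k(G)=\sum_{[\lambda]}k(H_\lambda)$ runs over $H$-orbit representatives of $\Irr(V)$; replacing $\Irr(V)$ by $V$ is legitimate for coprime action but deserves a word.) Your Case~1 ($|H|\leq p-1$) and Case~2 ($H$ cyclic of order $\geq p+1$, Schur's lemma, semiregularity, AM--GM) are complete and the numerical comparisons check out. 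These correspond roughly to the solvable-side and abelian-subgroup-heavy parts of the actual argument.

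However, Case~3 ($|H|\geq p+1$, $H$ non-abelian) is exactly where the theorem is hard, and your plan there is not a proof: ``Hering plus AM--GM plus a Nagao-style inequality'' is an aspiration, not an argument. In fact your diagnosis that the naive bound $k(H)+r\geq 2\sqrt{p-1}+1$ is ``too weak'' in this regime is off the mark --- the paper's core technical result (its Theorem~\ref{module}) is precisely that this naive inequality does hold for non-solvable $H$ acting faithfully and irreducibly with $p\nmid|H|$. Establishing it requires a genuine structural analysis: the paper first reduces to $p\geq 53$ (using the Vera-L\'opez--Sangroniz classification of non-nilpotent groups with at most $14$ classes), disposes of solvable $H$ via the H\'ethelyi--K\"ulshammer bound $k(X)\geq(49p+1)/60$, and then runs the full machinery of Mar\'oti's earlier paper [Mar16] --- imprimitivity decompositions, the class $\mathcal{C}_q$, Seager's orbit bound for primitive solvable modules, P\'alfy--Pyber for top quotients, Propositions~\ref{quasisimple} and~\ref{symplectic} for the quasisimple/symplectic-type cases, and finally Hering's theorem --- each retuned to squeeze out the extra~$+1$. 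Your sketch offers no replacement for this analysis: the inequality $k(H_i)\geq k(H)/[H:H_i]$ is too lossy when orbits are long and $k(H)$ is small, and ``examining Hering's families one at a time'' is precisely the nontrivial content of [Mar16, Sections~5--8] that you would need to reproduce and sharpen. As written, the proposal has a genuine gap at the case that carries all the difficulty.
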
 	
	
We make a few remarks on Theorem \ref{rank2}. 

In order to prove Theorem \ref{rank2}, we may assume that $G$ is not nilpotent. For if $G$ is nilpotent, then $k(G) \geq |V| \geq p^{2} \geq 2 \sqrt{p-1} +1$. In the paper \cite{Ver-San07} and its prequels, all non-nilpotent finite groups are classified with at most $14$ conjugacy classes. By going through these lists of groups, we see that no group $G$ is a counterexample to Theorem \ref{rank2} with $k(G) \leq 14$. This means that in Theorem \ref{rank2} we can assume that $2\sqrt{p-1}+1 \geq 15$. In other words, $p \geq 53$. 	
	
H\'ethelyi and K\"ulshammer \cite{Het-Kul03} proved that if $X$ is a finite solvable group whose order is divisible by the square of a prime $p$ then $k(X) \geq (49p + 1)/60$. Thus if $G$ is solvable (and $p \geq 53$), then $k(G) \geq (49p + 1)/60 \geq 2 \sqrt{p-1} + 1$. We may assume in Theorem \ref{rank2} that $G$ is not solvable.	
	
Since $k(G) \geq k(G/V) + n(G,V) - 1$ by the Clifford-Gallagher formula (see the comment after \cite[Theorem 1.10a]{Sch07} and also \cite[Proposition 3.1b]{Sch07}), it is sufficient to show that $k(G/V) + n(G,V) \geq 2 \sqrt{p-1} + 2$. Since $k(G/V) \geq k(G/\cent{G}{V})$ and $n(G,V) = n(G/\cent{G}{V},V)$, we may assume that $G/V$ acts faithfully on $V$, that is, $V$ is a faithful and irreducible $G/V$-module. 

We find that Theorem \ref{rank2} would be a consequence of the following statement (where the new $G$ is $G/V$ in Theorem \ref{rank2}).

\begin{thm}
\label{module}
Let $V$ be an irreducible and faithful $FG$-module for some finite non-solvable group $G$ and finite field $F$ of characteristic $p$ at least $53$. If $p$ does not divide $|G|$, then $k(G) + n(G,V) -1 \geq 2\sqrt{p-1} +1$.  
\end{thm}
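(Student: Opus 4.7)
The strategy combines a crude orbit lower bound on $n(G,V)$ with a dichotomy on the size of $k(G)$, exploiting the non-solvability of $G$ to control the structure.

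First, since $G$ is non-solvable and hence non-abelian, any faithful irreducible $FG$-module must have $\dim_F V\geq 2$; otherwise $G$ would embed into the abelian group $F^\times$. Thus $|V|\geq |F|^2\geq p^2$, and since each nonzero $G$-orbit has size at most $|G|$,
\[
n(G,V)\;\geq\;1+\frac{|V|-1}{|G|}\;\geq\;1+\frac{p^2-1}{|G|}.\qquad(\ast)
\]
In particular, whenever $|G|\leq (p^2-1)/(2\sqrt{p-1}+1-k(G))$, the target inequality $k(G)+n(G,V)\geq 2\sqrt{p-1}+2$ follows immediately from $(\ast)$.

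I would then split into two regimes according to the value of $k(G)$. When $k(G)\leq 14$, the Vera-L\'opez--Sangr\'oniz classification \cite{Ver-San07} places $G$ inside a finite explicit list of non-solvable groups, so $|G|$ is bounded by an absolute constant. For each such $G$ and each prime $p\geq 53$ with $p\nmid|G|$, Landazuri--Seitz-type lower bounds on the minimum dimension of a faithful irreducible module in cross characteristic force $|V|$ to greatly exceed $p^2$, so that $(\ast)$ already gives $n(G,V)\geq 2\sqrt{p-1}+2-k(G)$. When instead $k(G)\geq 15$, for $p=53$ the inequality $k(G)+n(G,V)\geq 15+2=17>2\sqrt{52}+2$ already closes the case; for larger $p$ one controls $|G|$ via Landau-type estimates applied through the generalized Fitting subgroup $F^*(G)$, which by non-solvability must have a non-trivial layer $E(G)$, and then reapplies $(\ast)$.

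The principal obstacle is the intermediate regime in which $|G|$ is moderate and $k(G)$ is only modest. Here the naive bound $|V|\geq p^2$ is too weak, and I expect to need CFSG-based control over the minimum cross-characteristic dimensions of faithful modules of almost quasi-simple sections of $G$ (via Landazuri--Seitz together with the Brauer character tables of the small simple groups appearing as composition factors). An auxiliary tool is Brauer's permutation lemma, which under the coprimality $p\nmid|G|$ identifies $n(G,V)$ with the number of $G$-orbits on $\Irr(V)$, potentially enabling a uniform orbit/character count inside the semidirect product $V\rtimes G$ that is compatible with the lower bound of \cite{Mar16} applied to $V\rtimes G$.
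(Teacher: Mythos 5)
Your proposal diverges substantially from the paper's argument and, as you yourself flag, leaves a real gap in the ``intermediate regime'' that is not merely a technicality — it is exactly the regime the paper's proof is built to handle. The paper's proof of Theorem~\ref{module} is obtained by re-running, with modified constants, the elaborate structural analysis of \cite{Mar16}: one classifies $(G,V)$ by how far $V$ is from being $1$-dimensional (the class $\mathcal{C}_q$, imprimitive actions, quasisimple and symplectic-type generalized Fitting subgroup cases), proves orbit bounds such as Theorem~\ref{orbit} and the $|G|<|V|^{3/2}$ bound of Lemma~\ref{seged}, and then closes the residual small-dimensional cases with Hering's theorem. This produces, in effect, the very inequalities relating $|G|$ to $|V|$ that your bound $(\ast)$ presupposes but never establishes.

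Concretely, two parts of your plan do not go through. First, the crude estimate $n(G,V)\geq 1+(|V|-1)/|G|$ is useless unless $|V|$ is comparable to $|G|$, and that is exactly what fails in the hard cases: the paper's Proposition~\ref{symplectic} and Theorem~\ref{orbit}(2/b) isolate $2$-dimensional modules over $\mathbb{F}_q$ with $53\leq q\leq 2351$ and $|G|$ up to roughly $24(q-1)$, for which $(\ast)$ gives $n(G,V)\gtrsim q/24$, far short of $2\sqrt{q-1}+1$. In these cases $k(G)$ need not be $\leq 14$ (so Vera-L\'opez--Sangr\'oniz does not apply) nor large enough to dominate $2\sqrt{p-1}$; landing in that window requires the detailed structural work you defer to ``CFSG-based control,'' i.e.\ essentially reproducing the bulk of the proof. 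Second, the suggestion to apply \cite{Mar16} to $V\rtimes G$ via Brauer's permutation lemma goes in the wrong direction: the Clifford--Gallagher bound gives only $k(V\rtimes G)\geq k(G)+n(G,V)-1$, so the inequality $k(V\rtimes G)\geq 2\sqrt{p-1}$ from \cite{Mar16} yields no lower bound on $k(G)+n(G,V)-1$, and certainly not the improved $+1$. You would need equality in the Clifford--Gallagher step (trivial inertia for all nontrivial $\theta\in\Irr(V)$), which is a strong and unavailable hypothesis. So the plan is not a proof, and the missing ingredient is precisely the case analysis of \cite{Mar16} that the paper adapts.
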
 	
 
\subsection{Preliminaries} 
 
The size of the field $F$ will be denoted by $q$, the dimension of $V$ over $F$ by $n$, and the center of $\GL(n,q)$ by $Z$. 

We will use the following trivial observation throughout the paper. 

\begin{lem}
	\label{trivial}
	With the notation and assumptions above, $|V|/|G| \leq n(G,V)$.
\end{lem}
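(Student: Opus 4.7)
The plan is to apply the orbit--stabilizer theorem and the fact that the $G$-orbits partition $V$. By definition, $V$ decomposes as a disjoint union of its $n(G,V)$ orbits under the action of $G$, and for any $v \in V$ the size of the orbit $v^G$ equals $|G|/|\Stab_G(v)|$, which is at most $|G|$.

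Summing over all orbits, I would obtain $|V| = \sum_{v^G} |v^G| \leq n(G,V)\cdot |G|$, and dividing both sides by $|G|$ yields the desired inequality $|V|/|G| \leq n(G,V)$. No hypothesis on the module structure of $V$ or on $G$ is used beyond the existence of the action, so the lemma is really a general statement about finite group actions.

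There is no genuine obstacle here; the lemma is labeled trivial precisely because it is a one-line consequence of the orbit--stabilizer theorem. The interest of the statement lies not in its proof but in how it will be deployed: in the arguments of Theorem \ref{module} and Theorem \ref{rank2} it furnishes a clean lower bound on $n(G,V)$ whenever $|V|$ is large compared to $|G|$, which is the regime relevant to establishing the inequality $k(G) + n(G,V) - 1 \geq 2\sqrt{p-1} + 1$.
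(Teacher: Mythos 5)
Your proof is correct and is the standard argument: each orbit has size at most $|G|$, so summing over the $n(G,V)$ orbits gives $|V| \leq n(G,V)\cdot|G|$. The paper does not actually supply a proof for this lemma (it is flagged as a trivial observation), and your one-liner via the orbit decomposition and orbit--stabilizer is exactly what is intended.
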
 
 
Suppose that $G$ transitively permutes a set $\{ V_{1}, \ldots , V_{t} \}$ of proper subspaces of $V$ with $t$ an integer with $1 \leq t \leq n$ as large as possible with the property that $V = V_{1} \oplus \cdots \oplus V_{t}$. Let $B$ be the kernel of this action of $G$ on the set of subspaces. Note that $G/B$ is a transitive permutation group of degree $t$. The subgroup $B$ is isomorphic to a subdirect product of $t$ copies of a finite group $T$. In other words $B$ is isomorphic to a subgroup of $T_{1} \times \cdots \times T_{t}$ where for each $i$ with $1 \leq i \leq t$ the vector space $V_{i}$ is a faithful $T_{i}$-module and $T_{i} \cong T$. Let $H_{1}$ be the stabilizer of $V_{1}$ in $G$. Let $k$ be the number of orbits of $H_{1}$ on $V_1$. 

The following is \cite[Lemma 2.6]{Fou69}.

\begin{lem}
	\label{l99}
	With the above notation and assumptions, $$\max \{ t+1, k  \} \leq \binom{t+k-1}{k-1} \leq n(G,V).$$
\end{lem}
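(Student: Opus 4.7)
The plan is to exhibit a $G$-invariant on $V$ taking at least $\binom{t+k-1}{k-1}$ distinct values, and to deduce the left-hand inequality by a short binomial estimate.

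For each $i \in \{1,\dots,t\}$, fix $g_i \in G$ with $g_i V_1 = V_i$, taking $g_1 = 1$. The stabilizer $H_i$ of $V_i$ in $G$ is $g_i H_1 g_i^{-1}$, and the bijection $x \mapsto g_i x$ transfers the $k$ orbits $O_1,\dots,O_k$ of $H_1$ on $V_1$ to labeled orbits $O_j^{(i)} := g_i O_j$ of $H_i$ on $V_i$. This labeling is independent of the particular choice of $g_i$: any other valid choice differs by an element of $H_1$, which permutes its orbits on $V_1$ trivially. Every $v \in V$ decomposes uniquely as $v = v_1 + \cdots + v_t$ with $v_i \in V_i$; I attach to $v$ the multiset $\tau(v) := [j_1,\dots,j_t]$, where $v_i \in O_{j_i}^{(i)}$.

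The crux is that $\tau$ is constant on $G$-orbits. Given $g \in G$, let $\pi$ be the permutation of $\{1,\dots,t\}$ with $g V_i = V_{\pi(i)}$, so $(gv)_{\pi(i)} = g v_i$. Since $gg_i$ also sends $V_1$ to $V_{\pi(i)}$, the label of $(gv)_{\pi(i)}$ is the $H_1$-orbit of $(gg_i)^{-1}(gv_i) = g_i^{-1} v_i$, which is exactly the label of $v_i$. Hence $\tau(gv) = \tau(v)$. Since every multiset of size $t$ from $\{1,\dots,k\}$ is realized by choosing one vector in each prescribed $H_i$-orbit, $\tau$ descends to a surjection from $V/G$ onto the set of such multisets, whose cardinality is $\binom{t+k-1}{k-1}$. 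This gives $n(G,V) \geq \binom{t+k-1}{k-1}$.

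For the left-hand inequality, note that $k \geq 2$ because $\{0\}$ is an $H_1$-orbit on $V_1$, and $t \geq 2$ because no proper subspace $V_1$ can equal $V$. The $k$ constant multisets $[j,\dots,j]$, together with the $t-1$ multisets of shape $[1^{a},2^{t-a}]$ for $1 \leq a \leq t-1$, are $k + t - 1$ distinct multisets, so $\binom{t+k-1}{k-1} \geq k + t - 1 \geq \max\{k,t+1\}$. The only genuinely delicate point in the argument is verifying the coherence of the orbit-labeling under the $G$-action; this reduces to the earlier observation that the induced bijection $V_1/H_1 \leftrightarrow V_i/H_i$ does not depend on the representative chosen in the coset $g_i H_1$.
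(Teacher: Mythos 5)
Your argument is correct, and it supplies a self-contained proof where the paper simply cites Foulser's Lemma 2.6 without reproducing a proof. The invariant you construct — the multiset of $H_1$-orbit labels attached to the block components of a vector, shown to be well-defined independently of the chosen coset representatives $g_i$ and constant on $G$-orbits — is exactly the standard (Foulser-style) way to get the right-hand inequality, and your check that $t,k\geq 2$ (hence $\binom{t+k-1}{k-1}\geq t+k-1\geq\max\{t+1,k\}$) correctly handles the elementary left-hand bound. One small remark on presentation: it is worth stating explicitly that $V_1\neq\{0\}$ (which follows since the $V_i$ are transitively permuted proper subspaces whose direct sum is the nonzero module $V$), as this is what makes $\{0\}$ a proper subset of $V_1$ and hence $k\geq 2$; you implicitly use this but do not flag it.
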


We will use the following consequence of a result of Seager \cite[Theorem 1]{Sea87}.

\begin{pro}   
	\label{Seager}
	Let $W$ be a faithful primitive $FH$-module for a finite solvable group $H$ not contained in $\mathrm{\Gamma L}(1,p^{m})$ where $F$ is a field of prime order $p \geq 53$ and $|W| = p^{m}$. Then $p^{m/2}/12m < n(H,W)$. 
\end{pro}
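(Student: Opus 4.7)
The plan is to deduce this result directly from Seager's Theorem 1 in \cite{Sea87}. That theorem analyzes primitive solvable subgroups $H$ of $\GL(m,p)$ not contained in $\Gamma L(1,p^{m})$ and produces a lower bound on the rank of the associated primitive affine permutation group $W \rtimes H$, which coincides with the number $n(H,W)$ of orbits of $H$ on $W$. Her bound has the shape $n(H,W) \geq |W|^{1/2}/f(m)$ in a generic case, together with a short list of structural exceptions for $H$.

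First, in the generic case I would verify that Seager's function $f(m)$ is bounded above by $12m$, which immediately yields $n(H,W) > p^{m/2}/(12m)$. Since the hypothesis $H \not\leq \Gamma L(1,p^{m})$ rules out $m=1$, the comparison of $f(m)$ with $12m$ is a routine check on an explicit polynomial (or piecewise-polynomial) function in $m$.

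Second, for each exceptional configuration in Seager's theorem---each such configuration imposes restrictive structural conditions on $H$, typically involving a small-dimensional quasi-simple component---I would verify the inequality directly. The most useful tool here is the trivial orbit bound $n(H,W) \geq |W|/|H| = p^{m}/|H|$ from Lemma \ref{trivial}. In each exception $|H|$ is bounded by a function of $m$ alone (independent of $p$), so the resulting lower bound $p^{m}/C_m$ dominates $p^{m/2}/(12m)$ as soon as $p^{m/2} > C_m/(12m)$. The hypothesis $p \geq 53$ is chosen precisely to guarantee this uniformly across all the exceptions in Seager's list.

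The main obstacle, and really the only substantive step, is the bookkeeping required to work through Seager's exceptions and verify the inequality in each; once that is done the generic bound is essentially immediate. Nothing beyond Lemma \ref{trivial} and elementary numerical comparisons is needed, on top of Seager's classification itself.
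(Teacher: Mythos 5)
The paper gives no proof of this proposition; it is stated verbatim as a ``consequence of a result of Seager [Theorem 1]'' and left at that, so there is no argument in the paper against which to compare. Evaluated on its own terms, your outline is correct in its overall shape: the rank of the affine primitive group $W \rtimes H$ acting on $W$ equals $n(H,W)$, Seager's Theorem 1 gives a lower bound on that rank of the form $\sqrt{|W|}/f(m)$ when $H$ is a solvable primitive linear group not inside $\Gamma L(1,p^m)$, and comparing $f(m)$ with $12m$ while disposing of any exceptional cases finishes the job. You also correctly observe that the $\Gamma L(1,p^m)$ hypothesis forces $m \geq 2$.

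There is, however, a genuine factual error in your description of the exceptions. You say they ``typically involve a small-dimensional quasi-simple component.'' Seager's theorem concerns \emph{solvable} groups, which have no quasi-simple subgroups at all; the structural dichotomy in that classification (and in its later uses, cf. Propositions \ref{quasisimple} and \ref{symplectic} in this paper) is between the $\Gamma L(1,p^m)$ case and the case where a normal extraspecial/symplectic-type $r$-group $R$ is the relevant minimal non-central normal subgroup, so that $H/(RZ)$ embeds in a symplectic group. What you are describing sounds like the non-solvable branch of the analysis in \cite{Mar16}, not Seager. Relatedly, the most natural way to dispatch Seager's exceptional cases is probably not the $|W|/|H|$ comparison you propose but simply the observation that Seager's exceptional degrees are all small, so that $p \geq 53$ (with $m \geq 2$) excludes them outright. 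Your $|W|/|H|$ strategy can also work if one checks that $|H|$ is bounded in terms of $m$ for each exception, but as written this is an unverified claim, and the misidentification of the exceptions as involving quasi-simple groups means you have not actually located the list you would need to check.
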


The following is \cite[Proposition 4]{Pal-Pyb98}. 

\begin{pro}
	\label{PP}
	Let $X$ be any subgroup of the symmetric group $S_m$ whose order is coprime to a prime $p$. If $m > 1$ then $|X| < p^{m-1}$. 
\end{pro}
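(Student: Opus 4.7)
The plan is to prove $|X|<p^{m-1}$ by strong induction on $m$. The base case $m=2$ is immediate: any $p'$-subgroup of $S_2$ has order at most $2$, which is $<p$ when $p$ is odd and equals $1<2$ when $p=2$. For the inductive step I would split into three cases according to the orbit structure of $X$ on $\{1,\dots,m\}$: intransitive, transitive but imprimitive, and primitive.

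\emph{Intransitive case.} If the $X$-orbits have sizes $m_1,\dots,m_k$ with $k\ge 2$, then $X$ embeds in $S_{m_1}\times\cdots\times S_{m_k}$ via its coordinate projections $\pi_i$. Each $\pi_i(X)$ is a $p'$-subgroup of $S_{m_i}$, so by induction $|\pi_i(X)|<p^{m_i-1}$ (reading $p^0=1$ when $m_i=1$). The elementary inequality $|X|\le\prod_i|\pi_i(X)|$ then yields $|X|<p^{m-k}\le p^{m-2}<p^{m-1}$.

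\emph{Transitive imprimitive case.} Fix a nontrivial block system consisting of $k>1$ blocks of size $b>1$ with $kb=m$, so that $X$ embeds in the wreath product $S_b\wr S_k$. Let $K$ be the kernel of the induced action on the blocks. Then $K\le S_b^k$ and each projection of $K$ into $S_b$ is a $p'$-subgroup, so $|K|<p^{k(b-1)}$ by induction. The quotient $X/K$ is a transitive $p'$-subgroup of $S_k$ with $|X/K|<p^{k-1}$ by induction, and multiplying these strict inequalities gives $|X|<p^{k(b-1)+k-1}=p^{m-1}$, exactly the desired bound.

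\emph{Primitive case, the main obstacle.} If $p>m$, the constraint $p\nmid|X|$ is vacuous, and an elementary induction establishes $m!<(m+1)^{m-1}\le p^{m-1}$, settling this subcase. If $p\le m$, then $p$ divides $|A_m|$, so $X$ is necessarily a proper primitive subgroup of $S_m$ not containing $A_m$; the plan is to appeal to strong order bounds on such groups. Maróti's theorem (building on classical work of Bochert, Wielandt and Praeger--Saxl) gives $|X|\le m^{1+\lfloor\log_2 m\rfloor}$ outside of a short explicit list of exceptions, which is dominated by $p^{m-1}$ once $m$ is moderately large. The finitely many remaining low-degree configurations can be dispatched via the O'Nan--Scott dichotomy: affine primitive groups $V\rtimes X_0$ with $V=\mathbb{F}_r^d$ and $r\ne p$ reduce to bounds on $p'$-subgroups of $\GL(d,r)$, while almost simple, diagonal, product-action and twisted-wreath types follow from known orders of simple groups together with the constraint $p\nmid|X|$. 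The tightest subcase is $p=2$: here the Odd Order Theorem forces $X$ to be solvable, and the maximum order of an odd-order permutation subgroup of $S_m$ grows only like $3^{m/2}$, comfortably below $2^{m-1}$.
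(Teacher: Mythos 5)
The paper does not actually prove this proposition; it simply cites it as Proposition~4 of P\'alfy--Pyber \cite{Pal-Pyb98}. So there is no in-paper argument to compare against, and your proposal should be judged on its own merits and against the cited source.

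Your intransitive and imprimitive reductions are correct and are the standard first steps of such an induction (one small slip: for an orbit of size $m_i=1$ you only get $|\pi_i(X)|\le p^{m_i-1}$, not $<$, but the conclusion survives because $k\ge 2$ already forces $p^{m-k}\le p^{m-2}<p^{m-1}$). The subcase $p>m$ in the primitive step is also fine. The genuine gap is the primitive case with $p\le m$, which is the whole content of the proposition: here you outline a strategy (Mar\'oti's bound on primitive groups, O'Nan--Scott, Feit--Thompson) and then assert that ``the finitely many remaining low-degree configurations can be dispatched'' and that the odd-order case is ``comfortably below $2^{m-1}$.'' These claims are not carried out, and they are not automatic. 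The bound $m^{1+\lfloor\log_2 m\rfloor}<p^{m-1}$ fails for $p=2$ until roughly $m\ge 25$, and for $p=2,3$ the target $p^{m-1}$ is small enough that every primitive $p'$-group of moderate degree (affine groups of odd prime-power degree for $p=2$; affine and Suzuki-socle groups for $p=3$) has to be bounded individually, which you do not do. The heuristic ``$3^{m/2}$'' for odd-order subgroups of $S_m$ is stated without reference and without verification that it beats $2^{m-1}$ in the relevant range (indeed $3^{m/2}\ge 2^{m-1}$ for $m\le 7$). So the primitive case is a plan rather than a proof, and precisely where the statement is tightest. To close this you would need either to quote a precise order bound for primitive $p'$-groups with explicit exceptions and then actually run the finite check, or to find a cleaner structural argument for the primitive case (for instance, exploiting that $X$ is $2$-transitive when the point stabilizer acts transitively, and then invoking the classification of $2$-transitive $p'$-groups).
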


The proof of the next lemma is identical to the proof of \cite[Lemma 3.5]{Mar16}. 

\begin{lem}
	\label{abelian}
	If $G$ has an abelian subgroup of index at most ${|V|}^{1/2}/(2 \sqrt{p-1}+1)$ and $n(G,V) \leq 2\sqrt{p-1}+1$, then $k(G) \geq 2\sqrt{p-1} +1$.
\end{lem}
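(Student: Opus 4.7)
The plan is to combine the two numerical hypotheses — the small-index abelian subgroup and the upper bound on $n(G,V)$ — through the standard class-counting bound for a group containing an abelian subgroup of small index.

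First I would record the following well-known consequence of having an abelian subgroup: if $A \le G$ is abelian, then for every $a \in A$ one has $\cent{G}{a} \supseteq A$, so each $G$-conjugacy class meets $A$ in at most $[G:A]$ elements. Writing $A$ as the disjoint union of its intersections with the $G$-classes it meets then yields
$$k(G) \;\ge\; \frac{|A|}{[G:A]} \;=\; \frac{|G|}{[G:A]^{2}}.$$

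Next I would feed in the two hypotheses of the lemma. The assumed bound on the index of $A$ gives $[G:A]^{2} \le |V|/(2\sqrt{p-1}+1)^{2}$, so
$$k(G) \;\ge\; \frac{|G|\,(2\sqrt{p-1}+1)^{2}}{|V|}.$$
By Lemma~\ref{trivial} together with the assumption $n(G,V) \le 2\sqrt{p-1}+1$, one has $|V| \le |G|\,(2\sqrt{p-1}+1)$. Substituting this into the previous inequality cancels one factor of $2\sqrt{p-1}+1$ and produces exactly the desired bound $k(G) \ge 2\sqrt{p-1}+1$.

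The hard part is essentially nonexistent: this is a short telescoping of the two hypotheses with the class-count inequality for an abelian subgroup. The only points to be careful about are the direction of the inequality in Lemma~\ref{trivial}, and the observation that the abelianness of $A$ is exploited purely through the containment $\cent{G}{a} \supseteq A$ for $a \in A$, so no finer structural information about $G$ or about the $G$-action on $V$ is required beyond what the two numerical hypotheses already encode.
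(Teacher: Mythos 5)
Your proof is correct and is essentially the argument the paper refers to (the paper just cites the proof of \cite[Lemma~3.5]{Mar16} as identical, and that proof uses precisely the class-counting bound $k(G)\ge |G|/[G:A]^2$ for an abelian subgroup $A$, combined with Lemma~\ref{trivial} and the two numerical hypotheses). No gaps; the chain $k(G)\ge |G|/[G:A]^2 \ge |G|(2\sqrt{p-1}+1)^2/|V| \ge 2\sqrt{p-1}+1$ is exactly right.
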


\subsection{The class $\mathcal{C}_{q}$}

Our first aim in proving Theorem \ref{module} is to describe (as much as possible) the possibilities for $G$ and $V$ with the condition that $n(G,V) < 2 \sqrt{q-1} + 1$ where $q$ is the size of the underlying field $F$. For this we need to introduce a class of pairs $(G,V)$ which we denote by $\mathcal{C}_{q}$.  

Next we define a class of pairs $(G,V)$ where $V$ is an $FG$-module. Let $W$ be a not necessarily faithful $QH$-module for some finite field extension $Q$ of $F$ and some finite group $H$, and such that the characteristic of $Q$ does not divide $|H|$. We write $\mathrm{Stab}_{Q_{1}}^{Q}(H,W)$ for the class of pairs $(H_{1},W_{1})$ with the property that $W_{1}$ is a $Q_{1}H_{1}$-module with $F \leq Q_{1} \leq Q$ where $W_{1}$ is just $W$ viewed as a $Q_{1}$-vector space and $H_{1}$ is some group with the following property. If $\varphi : H_{1} \rightarrow \GL(W_{1})$ and $\psi : H \rightarrow \GL(W)$ denote the natural, not necessarily injective homomorphisms, then $\varphi(H_{1}) \cap \GL(W) = \psi(H)$. We write $\mathrm{Ind}(H,W)$ for the class of pairs $(H_{1},W_{1})$ with the property that $W_{1} = \mathrm{Ind}_{H}^{H_{1}}(W)$ for some group $H_{1}$ with $H \leq H_{1}$.

\begin{defi}
Inheriting the notation from the previous paragraph, we define $\mathcal{C}_{q}$ to be the class of all pairs $(G,V)$ with the property that $V$ is a finite, faithful and irreducible $FG$-module such that the characteristic of $F$ does not divide $|G|$ and that $(G,V)$ can be obtained by repeated applications of $\mathrm{Stab}_{Q_{1}}^{Q_{2}}$ and $\mathrm{Ind}$ starting with $(H,W)$ where $W$ is a $1$-dimensional $QH$-module with $Q$ a field extension of $F$. 
\end{defi}

The main results of this section are Lemmas \ref{order} and \ref{new}. 

\begin{lem}
	\label{order}
	Let $(G,V) \in \mathcal{C}_{q}$ and assume $n(G,V) < 2\sqrt{q-1} + 1$. If $q \geq 53$, then $|G| < {|V|}^{3/2}$. 
\end{lem}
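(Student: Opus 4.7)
I would proceed by induction on the length of a $\mathcal{C}_{q}$-construction of $(G,V)$.

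For the base of the induction, consider a \emph{primitive} pair, one built using only $\Stab$ operations (no $\Ind$); then $G$ embeds in the semilinear group $\Gamma\mathrm{L}_{1}(Q) = Q^{\times} \rtimes \Gal(Q/F)$ for some extension $Q \geq F$, with $V = Q$ as an $F$-vector space. This gives $|G| \leq (|Q|-1) \cdot [Q:F]$, and since $|V| = |Q| = q^{[Q:F]}$, the required $|G| < |V|^{3/2}$ reduces to $[Q:F] \leq q^{[Q:F]/2}$, an elementary estimate that holds for $q \geq 2$ and every $[Q:F] \geq 1$.

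For the inductive step, first suppose the final operation is $\Ind$, producing $(G,V)$ from $(G_{0},V_{0})$ with $t = [G:G_{0}]$, so $|V| = |V_{0}|^{t}$ and $|G| = t|G_{0}|$. Applying Lemma \ref{l99} to $(G,V)$ yields both $t+1 \leq n(G,V) < 2\sqrt{q-1}+1$ and $n(G_{0},V_{0}) \leq n(G,V)$; the latter transfers the orbit hypothesis to $(G_{0},V_{0})$, so the inductive hypothesis applies. One then estimates
\[
|G| = t|G_{0}| < t|V_{0}|^{3/2} = t|V|^{3/(2t)} \leq |V|^{3/2},
\]
the last inequality being equivalent to $t^{t} \leq |V|^{3(t-1)/2}$, which is routine given $|V| \geq q^{t}$ and $t \leq 2\sqrt{q-1}$.

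The main obstacle is the case when the final operation is $\Stab$: here $|V|$ is unchanged while $|G|$ may grow by a factor as large as $[Q_{2}:Q_{1}]$, and the orbit hypothesis does not descend cleanly to the smaller pair. I plan to circumvent this by strengthening the inductive invariant to
\[
|G|\cdot[Q:F] \leq |V|^{3/2},
\]
where $Q$ denotes the current field of definition. This stronger inequality is preserved exactly by $\Stab$ (the growth of $|G|$ by $[Q_{2}:Q_{1}]$ is compensated by the corresponding decrease in $[Q:F]$), propagates under $\Ind$ via the same computation above, and is established at the primitive base by $[Q:F]^{2} \leq q^{[Q:F]/2}$, an inequality valid for $q \geq 53$. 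The original bound $|G| < |V|^{3/2}$ then follows since $[Q:F] \geq 1$.
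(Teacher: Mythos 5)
Your induction on the length of a $\mathcal{C}_q$-construction is a genuinely different route from the paper's, which follows \cite[Lemma~4.1]{Mar16}: there one works directly with the maximal imprimitivity decomposition $V = V_1 \oplus \cdots \oplus V_t$ set up before Lemma~\ref{l99}, bounding $|G|\le |G/B|\,|B|$ using Foulser's lemma (Lemma~\ref{l99}) to control $t$ and P\'alfy--Pyber (Proposition~\ref{PP}) to control $|G/B|$. Your $\mathrm{Ind}$ step is basically sound and in fact avoids P\'alfy--Pyber, since $|G| = t|G_0|$ is an exact count and $n(G_0,V_0)\le n(G,V)$ does hold (a conjugating element sending a $V_0$-supported vector to a $V_0$-supported vector must stabilize the block $V_0$).

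The $\mathrm{Stab}$ case, however, has a genuine gap that the strengthened invariant does not close. Under $\mathrm{Stab}$, passing from $(H,W)$ over $Q_2$ to $(H_1,W_1)$ over $Q_1$ keeps the underlying set the same while \emph{enlarging} the acting group (since $\psi(H)\le\varphi(H_1)$), so $n(H,W)\ge n(H_1,W_1)$: the orbit count of the inner pair is \emph{larger}, and you have no upper bound on it. Hence the hypothesis $n(\cdot,\cdot)<2\sqrt{q-1}+1$ need not hold for $(H,W)$ and you cannot invoke the inductive hypothesis there, no matter how strong you make its conclusion. Separately, the claim that $|H_1|\le[Q_2:Q_1]\,|H|$ under $\mathrm{Stab}$ is not a consequence of the definition: the only condition is $\varphi(H_1)\cap\GL(W)=\psi(H)$, which does not force $\varphi(H_1)$ to lie in $N_{\GL(W_1)}(\GL(W))$, and so places no a priori bound of $[Q_2:Q_1]$ on $|\varphi(H_1)/\psi(H)|$. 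The same issue undermines the base-case claim that a $\mathrm{Stab}$-only construction forces $G\le\Gamma\mathrm{L}_1(Q)$; this structural control is exactly what the paper's argument extracts from the imprimitivity decomposition (and then pays for with P\'alfy--Pyber on the top group), rather than reading it off the recursive definition of $\mathcal{C}_q$.
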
 

Lemma \ref{order} is a slight variation of Lemma 4.1 of \cite{Mar16}. The proof of Lemma \ref{order} may be obtained from the proof of Lemma 4.1 of \cite{Mar16} by minor changes in places (and using Lemma \ref{l99} and Proposition \ref{PP}).

After minor changes to the proof of Lemma 4.2 of \cite{Mar16}, the condition $p \geq 59$ may be relaxed to $p \geq 53$ in the statement and the two occurrences of $2 \sqrt{p-1}$ may be changed to $2 \sqrt{p-1} + 1$. We obtain the following.

\begin{lem}
	\label{new}
	Let $(G,V) \in \mathcal{C}_{q}$ and assume $n(G,V) < 2\sqrt{q-1} + 1$. If $p \geq 53$, then at least one of the following holds.
	\begin{enumerate}
		\item $G$ has an abelian subgroup of index at most ${|V|}^{1/2}/(2\sqrt{p-1}+1)$.
		
		\item $|F| = p$, the module $V$ is induced from a $1$-dimensional module, and $G$ has a factor group isomorphic to $\mathfrak{A}_{n}$ or $\mathfrak{S}_{n}$ where $n = \dim_{F} (V)$. In this case we either have $n=1$, or $15 \leq n \leq 180$ and $p < 8192$.
	\end{enumerate}
\end{lem}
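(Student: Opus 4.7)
The plan is to adapt the proof of \cite[Lemma 4.2]{Mar16} to the slightly weakened hypothesis $p\geq 53$ together with the slightly sharpened conclusion in which $2\sqrt{p-1}$ is replaced by $2\sqrt{p-1}+1$. I would proceed by induction on the construction of the pair $(G,V)\in\mathcal{C}_q$, following the inductive definition of $\mathcal{C}_q$ via the operations $\mathrm{Stab}^{Q_2}_{Q_1}$ and $\mathrm{Ind}$ starting from a one-dimensional module. In the base case, where $V$ is one-dimensional over some extension field of $F$, the group $G$ embeds in the multiplicative group of that field and is therefore cyclic, so conclusion (i) holds trivially with $G$ itself as the abelian subgroup.

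Next I would handle the $\mathrm{Stab}^{Q_2}_{Q_1}$ step: here the new group contains the old one as a normal subgroup with cyclic quotient of order dividing $[Q_2:Q_1]$, and the number of orbits changes by a controlled amount, so the inductive hypothesis transfers with only routine bookkeeping. The core of the argument is the $\mathrm{Ind}$ step, in which $V=V_1\oplus\cdots\oplus V_t$ with $t\geq 2$ and $G$ permutes the summands transitively. Let $B$ be the kernel of the action on the set of summands and $T$ the group induced on one $V_i$. Since $p\nmid|G|$, Proposition \ref{PP} gives $|G/B|<p^{t-1}$ unless $G/B$ contains $\mathrm{Alt}(t)$. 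Lemma \ref{order} supplies the global bound $|G|<|V|^{3/2}$, and Lemma \ref{l99} links $n(G,V)$ to $t$ and to the number $k$ of $H_1$-orbits on $V_1$. When the primitive component $(T,V_1)$ is not contained in $\Gamma L(1,q_1^{n/t})$ one applies Proposition \ref{Seager} to get a strong lower bound on $n(T,V_1)$ (and hence on $n(G,V)$); when it is, one uses the large abelian subgroup of $\Gamma L(1,\cdot)$ and Lemma \ref{abelian} to land in case (i).

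The only surviving scenario is when $G/B$ contains $\mathrm{Alt}(t)$ acting naturally, $B$ is large enough that no small abelian subgroup exists, and $V$ is induced from a one-dimensional module. Combining $|G|<|V|^{3/2}=p^{3n/2}$ with $|G|\geq |\mathrm{Alt}(n)|\sim n!/2$ (and the Stirling estimate) forces both $n\leq 180$ and $p<8192$; a direct check rules out $n<15$, yielding the explicit range in conclusion (ii). That $|F|=p$ in this case follows from the induction structure together with the constraint that the one-dimensional module lies over $F$ itself (otherwise one could have enlarged $t$).

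The main obstacle is purely numerical: at each step of the case analysis in \cite{Mar16} the constants are tight, and one must verify that replacing $2\sqrt{p-1}$ by $2\sqrt{p-1}+1$ and relaxing $p\geq 59$ to $p\geq 53$ does not upset the chain of inequalities in the three or four places where these constants appear (notably in the application of Proposition \ref{Seager} and in the comparison of $|G/B|<p^{t-1}$ with $n(G,V)<2\sqrt{q-1}+1$). Since the slack gained by the stronger conclusion $2\sqrt{p-1}+1$ compensates for lowering the threshold to $p\geq 53$, no structural change to the argument of \cite[Lemma 4.2]{Mar16} is required beyond updating these constants; I would execute the full proof of that lemma with the new numerical parameters substituted throughout.
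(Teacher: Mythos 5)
Your plan---rerun the proof of \cite[Lemma 4.2]{Mar16} with $p\geq 53$ in place of $p\geq 59$ and $2\sqrt{p-1}+1$ in place of $2\sqrt{p-1}$, checking that the slack carries through the handful of numerical estimates---is exactly what the paper does; the authors say nothing more than that the result is obtained ``after minor changes to the proof of Lemma 4.2 of \cite{Mar16}.'' One small slip in your sketch of the base case: when $V$ is one-dimensional over $F$ itself with $|F|=p$, conclusion (i) does \emph{not} hold, since $|V|^{1/2}/(2\sqrt{p-1}+1)=\sqrt{p}/(2\sqrt{p-1}+1)<1$ forbids any subgroup of that index; this is precisely why conclusion (ii) allows $n=1$, and that is the branch the base case falls into.
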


\subsection{Some absolutely irreducible representations} 

The following is Proposition 5.1 of \cite{Mar16} with two occurrences of $59$ changed to $53$ and one occurrence of $2 \sqrt{q-1}$ changed to $2 \sqrt{q-1} + 1$. 

\begin{pro}
	\label{quasisimple}
	Suppose that $p$ is a prime at least $53$. Let $H$ be a finite subgroup of $\GL(n,q)$ with generalized Fitting subgroup a quasisimple group where $q$ is a power of $p$. Put $G = Z \circ H$ where $Z$ is the multiplicative group of $F$. Furthermore suppose that $V$ is an absolutely irreducible $FT$-module for every non-central normal subgroup $T$ of $G$. Suppose also that $|G|$ is not divisible by $p$. Then $n(G,V) \geq 2\sqrt{q-1} + 1$ unless possibly if $n=2$, $q$ is in the range $53 \leq q \leq 14 389$, it is congruent to $\pm 1$ modulo $10$, and $G = Z \circ 2.\mathfrak{A}_{5}$. 
\end{pro}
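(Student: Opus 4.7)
The plan is to follow almost verbatim the strategy of \cite[Proposition 5.1]{Mar16}, adjusting only the numerical thresholds: the lower bound $p\geq 53$ replaces $p\geq 59$, and the target inequality strengthens from $2\sqrt{q-1}$ to $2\sqrt{q-1}+1$. Let $L$ denote the simple quotient of the quasisimple generalized Fitting subgroup $F^*(H)$, so that $H/\zent H$ embeds into $\Aut(L)$. Since $|G|$ is coprime to $p$, the module $V$ restricts to a faithful absolutely irreducible cross-characteristic representation of the quasisimple preimage of $L$ in $H$ (the hypothesis $p\nmid|G|$ excludes the defining characteristic of $L$ except for a handful of small simple groups $L$ with $p\nmid|L|$).

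The first main step is to apply the Classification of Finite Simple Groups together with the Landazuri--Seitz--Tiep lower bounds on the minimal cross-characteristic representation degree. For each family of simple groups this forces $n\geq n_0(L)$, where $n_0(L)$ grows polynomially with $|L|$. Combining this with Lemma \ref{trivial}, namely $n(G,V)\geq |V|/|G|=q^n/|G|$, and with the standard estimate $|G|\leq (q-1)\cdot|\Mult(L)|\cdot|\Aut(L)|$, one is reduced to a finite list of pairs $(L,n)$ for which $q^n/|G|<2\sqrt{q-1}+1$. The numerical slack built into the estimates of \cite{Mar16} easily absorbs both the shift $p\geq 53$ and the additive $+1$ in the target, so the case-by-case treatment of alternating, classical, exceptional, and sporadic groups carries over with only routine reverifications.

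The main obstacle, and the source of the stated exception, lies in the small-dimensional cases, specifically $n=2$. Here the quasisimple normal subgroup is necessarily a central cover of $\PSL(2,r)$ for some $r$, and the representation $2.\mathfrak{A}_5\hookrightarrow\GL(2,q)$ exists precisely when $\mathbb{F}_q$ contains $\sqrt{5}$, i.e.\ when $q\equiv\pm 1\pmod{10}$. For $G=Z\circ 2.\mathfrak{A}_5$ one has $|G|=60(q-1)$ and $|V|=q^2$, so a direct orbit count (exploiting that a generic vector has trivial stabilizer in $2.\mathfrak{A}_5$ modulo its centre) yields $n(G,V)=q/60+O(1)$. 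Comparing with $2\sqrt{q-1}+1$ pins down the exceptional window $53\leq q\leq 14\,389$. The remaining small-dimensional candidates---larger $\PSL(2,r)$, or the covers $2.\mathfrak{A}_4$ and $2.\mathfrak{S}_4$ in dimension two---are eliminated either by the non-existence of a suitable faithful cross-characteristic representation or by a strictly larger generic orbit count; the shift to the new target $2\sqrt{q-1}+1$ introduces no additional exceptions.
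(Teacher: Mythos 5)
Your proposal follows exactly the approach the paper itself takes: the paper's entire "proof" of Proposition \ref{quasisimple} is the one-line remark that it is \cite[Prop. 5.1]{Mar16} with the two occurrences of $59$ changed to $53$ and the target $2\sqrt{q-1}$ raised to $2\sqrt{q-1}+1$. Your sketch of the underlying mechanism in \cite{Mar16} (CFSG plus Landazuri--Seitz--Tiep degree bounds to reduce to a finite list, then $n(G,V)\geq|V|/|G|$ from Lemma \ref{trivial}, then explicit small-dimensional analysis) is a faithful description of that argument, so in substance you are doing exactly what the paper does, just with more words.

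Two small cautions. First, the remark about $2.\mathfrak{A}_4$ and $2.\mathfrak{S}_4$ is a red herring: those groups are solvable, so they cannot arise as the quasisimple $F^*(H)$ in this proposition; they are genuinely irrelevant rather than merely "eliminated." Second, and more importantly, the claim that "comparing with $2\sqrt{q-1}+1$ pins down the exceptional window $53\leq q\leq 14\,389$" deserves scrutiny rather than assertion: the crude estimate $q^2/(60(q-1))\geq 2\sqrt{q-1}+1$ has its crossover slightly above $q=14\,400$, so raising the target by $1$ could in principle admit a few more prime powers into the exceptional window. The upper end $14\,389$ must be re-verified against the actual orbit count for $Z\circ 2.\mathfrak{A}_5$, not just read off from the statement; your proposal implicitly assumes it rather than re-deriving it, which is exactly the gap the paper also leaves implicit. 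Since Proposition \ref{quasisimple} feeds into Theorem \ref{orbit} and Theorem \ref{main3}, where the exceptional range is narrowed further, it suffices that $14\,389$ be an upper bound for the genuinely bad $q$, but a careful write-up should say a sentence about why this holds for the strengthened target.
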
 

Suppose that the group $G$ has a unique normal subgroup $R$ which is minimal subject to being non-central. Suppose that $R$ is an $r$-group of symplectic type for some prime $r$ (this is an $r$-group all of whose characteristic abelian subgroups are cyclic). Suppose that $V$ is an absolutely irreducible $FR$-module. Let $|R/Z(R)| = r^{2a}$ for some positive integer $a$. Then the dimension of the module is $n = r^{a}$. Suppose that $Z \leq G$. The group $G/(R Z)$ can be considered as a subgroup of the symplectic group $\Sp_{2a}{(r)}$. As always, we assume that $q \geq p \geq 53$.

The following is Proposition 5.2 of \cite{Mar16} with the obvious changes ($59$ became $53$ and $2 \sqrt{q-1}$ is changed to $2 \sqrt{q-1}+1$) together with a relaxed condition, the bound $2297$ is now $2351$.  

\begin{pro}
	\label{symplectic}
	Suppose that $V$ and $G$ satisfy the assumptions of the previous paragraph. If $n(G,V) < 2\sqrt{q-1}$, then $n=2$, $59 \leq q=p \leq 2351$, and $|G/Z| \leq 24$.  
\end{pro}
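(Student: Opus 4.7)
The plan is to follow the strategy of the proof of Proposition 5.2 in \cite{Mar16}, making only the numerical adjustments forced by the weakened hypothesis $p \ge 53$ (in place of $p \ge 59$) and refining the upper bound on $q$ from $2297$ to $2351$. The structural inputs remain the same: $G = Z\cdot G_{0}$ where $G_{0}/R$ embeds into $\Sp_{2a}(r)$, so
\[ |G| \le (q-1)\cdot r^{1+2a}\cdot |\Sp_{2a}(r)|, \]
and Lemma \ref{trivial} combined with the hypothesis $n(G,V) < 2\sqrt{q-1}$ yields the master inequality
\[ q^{r^{a}} < 2\sqrt{q-1}\cdot (q-1)\cdot r^{1+2a}\cdot |\Sp_{2a}(r)|. \]

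The next step is a case analysis on $n = r^{a}$. Since this crude master inequality alone is too weak to rule out small values of $r^{a}$, I would recycle the refined orbit-counting bounds from \cite{Mar16}, which analyze orbits of subgroups of $N_{\GL(V)}(R)/R \le \Sp_{2a}(r)$ on $V \setminus \{0\}$ via the classical action of $\Sp_{2a}(r)$ on its natural symplectic $\mathbb{F}_{r}$-module. With these refined bounds, the case $r^{a} \ge 3$ produces a contradiction for every $q \ge 53$, after a routine recalibration of the thresholds used in \cite{Mar16} (which originally required $q \ge 59$). This forces $r = 2$ and $a = 1$, so $n = 2$, and hence $|G/Z| \le |R/Z(R)|\cdot|\Sp_{2}(2)| = 4\cdot 6 = 24$, as claimed.

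Finally, substituting $|G| \le 24(q-1)$ into the refined inequality yields a polynomial inequality of roughly the form $q^{2} < 48(q-1)^{3/2}$ (up to the sharpening coming from the refined orbit bound), whose largest prime solution is $p = 2351$. Proper prime powers $q = p^{f}$ with $f \ge 2$ are excluded because over $\mathbb{F}_{q}$ the subgroup $G_{0}/R$ cannot realize the full $\Sp_{2}(2)$ action, which lowers $|G/Z|$ and makes the orbit count exceed $2\sqrt{q-1}$ for all $q \ge 53$.

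The main obstacle I anticipate is purely numerical: tightening the borderline inequalities from $q \le 2297$ to $q \le 2351$ requires carefully revisiting where constants such as $\sqrt{q-1}$ versus $\sqrt{q-1}+1$ enter the estimates, and treating a handful of primes near the boundary by ad hoc computation; this is the content of the ``relaxed condition'' mentioned in the statement. A secondary subtlety is ensuring that the orbit-counting refinements from \cite{Mar16} transfer cleanly under the weakened lower bound $p\ge 53$, since a few intermediate estimates there were stated only for $p\ge 59$.
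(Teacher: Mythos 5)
Your approach matches the paper's: the paper's own ``proof'' is literally the one sentence preceding the statement, namely that this is \cite{Mar16}, Proposition~5.2 with routine numerical changes, and you likewise defer to \cite{Mar16} and recalibrate. However, two details of your reconstruction of the \cite{Mar16} argument are off. First, your explanation of where $2351$ comes from is backwards. You write that the inequality is ``roughly $q^{2} < 48(q-1)^{3/2}$, up to the sharpening coming from the refined orbit bound,'' with largest prime solution $2351$; but $q^{2} < 48(q-1)^{3/2}$ has its crossover near $q\approx 2300$ and largest prime solution $2297$, and a \emph{sharper} orbit bound would only make the inequality harder to satisfy, lowering the threshold, not raising it. The relaxation from $2297$ to $2351$ comes instead from weakening the hypothesis from $n(G,V) < 2\sqrt{q-1}$ to $n(G,V) < 2\sqrt{q-1}+1$, exactly as the prose preceding the proposition says (the printed hypothesis in the statement appears to retain $2\sqrt{q-1}$ by a slip, as one can also see by comparing with case (ii)(b) of Theorem~\ref{orbit}, which is where $2\sqrt{q-1}+1$ and the bound $2351$ are actually used). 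Second, your argument that proper prime powers $q = p^{f}$ with $f \geq 2$ are excluded ``because over $\mathbb{F}_{q}$ the group $G_0/R$ cannot realize the full $\Sp_2(2)$ action'' is both unnecessary and dubious: the embedding $G/(RZ)\hookrightarrow \Sp_{2a}(r)$ comes from the action on $R/\zent{R}$, an $\mathbb{F}_r$-space, and is not constrained by the base field $\mathbb{F}_q$ in the way you suggest. The correct and trivial reason is that once $q \le 2351$ and $n=2$ are established, the standing assumption $q \ge p \ge 53$ forces $q = p$, since $q = p^f$ with $f \ge 2$ would give $q \ge 53^2 = 2809 > 2351$.
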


\subsection{Bounding $n(G,V)$}

The following is Theorem 6.1 of \cite{Mar16} with the following changes. The occurrence of $59$ is changed to $53$, the lower bound $2 \sqrt{q-1}$ is changed to $2 \sqrt{q-1} + 1$, and $2297$ is changed to $2351$.

\begin{thm}
	\label{orbit}
	Let $V$ be a finite, faithful, coprime and irreducible $FG$-module. Suppose that the characteristic $p$ of the underlying field $F$ is at least $53$. Put $q = |F|$ and $|V| = q^{n}$. Let the center of $\GL(n,q)$ be $Z$. Then $n(G,V) \geq 2\sqrt{q-1}+1$ unless possibly if one of the following cases holds. 
	\begin{enumerate}
		
		\item $(G,V) \in \mathcal{C}_{q}$;
		
		\item $V = \mathrm{Ind}_{H}^{G}(W)$ for some $2$-dimensional $FH$-module $W$ where $H$ is as $G$ in Proposition \ref{quasisimple} or Proposition \ref{symplectic} satisfying one of the following.
		
		\begin{enumerate}
			\item $53 \leq q \leq 14 389$, $q \equiv \pm 1 \pmod{10}$, and $2.\mathfrak{A}_{5} \leq H/\cent{H}{W} \leq Z \circ 2.\mathfrak{A}_{5}$; 
			
			\item $53 \leq q=p \leq 2351$ and $|(H/\cent{H}{W})/\zent{H/\cent{H}{W}}| \leq 24$. 
		\end{enumerate}
	\end{enumerate}
\end{thm}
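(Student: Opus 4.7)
The plan is to imitate the proof of Theorem 6.1 of \cite{Mar16}, following an Aschbacher-style structural analysis of $G$ as a subgroup of $\GL(V)$, and to verify that the bookkeeping survives the three numerical weakenings (from $p\geq 59$ to $p\geq 53$, from $2\sqrt{q-1}$ to $2\sqrt{q-1}+1$, and the update of the exceptional range to $q\leq 2351$). First I would reduce to the case in which $V$ is absolutely irreducible: if not, let $Q$ be the endomorphism field of $V$ as an $FG$-module and consider the pair $(G,V)$ over $Q$; then any $\mathrm{Stab}_{Q_1}^{Q_2}$ construction showing the bound fails lands the pair in $\mathcal{C}_q$. Next I would reduce to the case in which $V$ is primitive as an $FG$-module: if $V=\mathrm{Ind}_H^G(W)$ for some proper $H<G$ and some $FH$-module $W$ of dimension $m<n$, I apply induction to $(H,W)$. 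If $(H,W)$ satisfies $n(H,W)\geq 2\sqrt{q-1}+1$, then Lemma \ref{l99} (with $t=n/m\geq 2$ or $k\geq 2\sqrt{q-1}+1$) immediately yields $n(G,V)\geq 2\sqrt{q-1}+1$. Otherwise $(H,W)$ is one of the exceptions listed in the conclusion, and tracing the induction construction places $(G,V)$ in $\mathcal{C}_q$ or in case (2) of the theorem.

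Assuming now that $V$ is primitive and absolutely irreducible, I would analyze the generalized Fitting subgroup $F^*(G)$. By primitivity every non-central normal subgroup of $G$ acts homogeneously on $V$, and after replacing $G$ by $Z\circ G$ we may assume $Z\leq G$. Either (a) $F^*(G)/Z(F^*(G))$ has a unique component, so that the layer $E(G)$ is either trivial or quasisimple, or (b) $F^*(G)$ has several components, in which case $V$ is a nontrivial tensor product of absolutely irreducible modules permuted by $G$. In case (a), if $E(G)=1$ then $F^*(G)$ is an $r$-group of symplectic type for some prime $r\neq p$, and Proposition \ref{symplectic} yields either the required bound or the range $n=2$, $53\leq q=p\leq 2351$, $|G/Z|\leq 24$, which is exactly case (2b) once $(G,V)$ is recorded as a two-dimensional constituent to be induced up; if $E(G)$ is quasisimple, Proposition \ref{quasisimple} yields either the bound or the small exception with $G=Z\circ 2.\mathfrak{A}_5$, $n=2$, $q\equiv\pm 1\pmod{10}$, $53\leq q\leq 14389$, which is case (2a).

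In case (b) of a genuine tensor decomposition $V=V_1\otimes\cdots\otimes V_s$ with $s\geq 2$, I would use the standard orbit-counting inequality $n(G,V)\geq \prod_i n(G_i,V_i)/|G/B|$ where $B$ is the kernel of the tensor-permutation action, together with the bound $n(G_i,V_i)\geq q$ that holds for any non-trivial absolutely irreducible faithful module of a group of coprime order (a dimension argument using Lemma \ref{trivial} combined with the $p'$ hypothesis on $|G|$). This forces $n(G,V)$ to dwarf $2\sqrt{q-1}+1$ unless all but one of the factors is trivial, which contradicts the primitivity reduction. Finally I would invoke Lemma \ref{abelian} to discard pairs $(G,V)$ in which $G$ has a sufficiently large abelian subgroup, which takes care of residual linear and field-extension configurations not already absorbed into $\mathcal{C}_q$.

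The main obstacle will be the combinatorial ledger of constants: in several places the proof in \cite{Mar16} just barely exceeds $2\sqrt{q-1}$, and strengthening the target to $2\sqrt{q-1}+1$ will shrink the safety margin in the inductive imprimitivity step and in the extraspecial-$2$-group case, which is why the exceptional interval must be enlarged from $q\leq 2297$ to $q\leq 2351$. I would track each inequality in the original proof and, wherever the slack is less than $1$, either sharpen the estimate using Propositions \ref{Seager} and \ref{PP} and Lemma \ref{l99}, or enlarge the list of exceptions accordingly, checking by direct computation that the new bound $2351$ is the precise threshold at which Proposition \ref{symplectic}'s estimate for $n(G,V)$ with $|G/Z|\leq 24$ and $n=2$ surpasses $2\sqrt{q-1}+1$.
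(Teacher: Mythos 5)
Your proposal takes the same route as the paper: the paper's own ``proof'' of Theorem~\ref{orbit} is nothing more than the remark that it is Theorem~6.1 of \cite{Mar16} with the prime bound relaxed from $59$ to $53$, the target raised from $2\sqrt{q-1}$ to $2\sqrt{q-1}+1$, and the numerical ceiling in case~(2b) widened from $2297$ to $2351$, and your plan --- replay the Aschbacher-style reduction of Mar\'oti (absolute irreducibility, imprimitive induction via Lemma~\ref{l99}, analysis of $F^*(G)$ feeding into Propositions~\ref{quasisimple} and~\ref{symplectic}, tensor decompositions) while auditing the slack in each inequality --- is exactly that verification spelled out. One small correction: Lemma~\ref{abelian} is not part of the proof of Theorem~\ref{orbit}, since that lemma's conclusion is a lower bound on $k(G)$ rather than on $n(G,V)$; it enters only later, in the proof of Theorem~\ref{main3} (and ultimately Theorem~\ref{module}), where $k(G)$ and $n(G,V)$ are combined, so you should drop that step from the argument for this particular theorem.
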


Part of the proof of Theorem \ref{orbit} (that is, following the proof of \cite[Theorem 6.1]{Mar16}) is Lemma \ref{seged}, which extends \cite[Lemma 4.1]{Mar16}.

\begin{lem}
	\label{seged}
	Let $(G,V)$ be a pair among the exceptions in Theorem \ref{orbit}, satisfying $n(G,V) < 2\sqrt{q-1}+1$. Then $|G| < |V|^{3/2}$.
\end{lem}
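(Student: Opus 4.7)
The plan is to handle each of the two exceptional families in Theorem~\ref{orbit} separately. Case (i), where $(G,V) \in \mathcal{C}_q$, is immediate: the inequality $|G| < |V|^{3/2}$ is exactly the conclusion of Lemma~\ref{order}, whose hypotheses $q \geq 53$ and $n(G,V) < 2\sqrt{q-1}+1$ are both granted by assumption. So no additional work is required in this case.

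For case (ii), I would exploit the induced structure $V = \Ind_{H}^{G}(W)$ with $\dim_{F} W = 2$. Setting $t = [G:H]$, we have $|V| = q^{2t}$. Let $B$ denote the kernel of the permutation action of $G$ on the $t$ conjugates $W_{1}, \ldots, W_{t}$ of $W$. Then $G/B$ embeds into $\mathfrak{S}_{t}$, and the natural maps $B \to \GL(W_{i})$ exhibit $B$ as a subdirect product of $t$ copies of $H$ (the hypotheses of Propositions~\ref{quasisimple} and~\ref{symplectic} force $\mathbf{C}_{H}(W)$ to be central and hence trivial, since $Z(H) \leq F^{\times}$ already acts faithfully by scalars on $W$). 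This yields the key structural bound
\[
|G| \;\leq\; t!\cdot |H|^{t}.
\]
Applying Lemma~\ref{l99} to this block decomposition gives $t+1 \leq n(G,V) < 2\sqrt{q-1}+1$, whence $t \leq 2\sqrt{q-1}$.

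To finish, Propositions~\ref{quasisimple} and~\ref{symplectic} supply $|H| \leq 60(q-1)$ in subcase (ii)(a) and $|H| \leq 24(q-1)$ in subcase (ii)(b), so $|H| < 60\,q$ in either case. The target inequality $|G| < |V|^{3/2} = q^{3t}$ thus reduces to $60^{t}\cdot t! < q^{2t}$. Using $t! \leq t^{t}$ together with $t \leq 2\sqrt{q-1}$, this follows from $120\sqrt{q-1} < q^{2}$, equivalently $q^{3/2} > 120\sqrt{(q-1)/q}$, which holds comfortably for $q \geq 53$ (since $53^{3/2} > 380$).

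The main obstacle, modest as it is, is the subdirect product step in case (ii): one must confirm that the image of $B$ in each $\GL(W_{i})$ really is isomorphic to $H$ rather than something larger, so that $|B| \leq |H|^{t}$. Once this is in place, the remaining inequalities carry substantial slack and the argument collapses to a single arithmetic check uniformly across the parameter ranges $q \in [53,14389]$ for subcase (a) and $q \in [53,2351]$ for subcase (b).
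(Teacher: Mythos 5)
The paper does not actually supply an explicit proof of Lemma~\ref{seged}: it only records the statement as something embedded in the proof of Theorem~\ref{orbit} (i.e.\ ``follows \cite[Theorem 6.1]{Mar16}'' and ``extends \cite[Lemma 4.1]{Mar16}''). So there is no written argument of the authors' against which to compare line by line, and your job was essentially to reconstruct the argument. Your case (i) is immediate and correct: it \emph{is} Lemma~\ref{order}. Your case (ii) follows the natural route (Clifford decomposition of the induced module, a subdirect product bound on the kernel $B$ of the block action, then the bound $t\le 2\sqrt{q-1}$ from Lemma~\ref{l99} and a short arithmetic estimate), and the final numerical inequality $120\sqrt{q-1}<q^2$ for $q\ge 53$ is right.

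However, the parenthetical claim that $\cent H W$ is central and hence trivial is a genuine error. Elements of $H=\Stab_G(W_1)$ that kill $W_1$ can perfectly well act nontrivially on $V_2\oplus\cdots\oplus V_t$; for instance in a wreath product $H_0\wr\mathfrak{S}_t$ acting on $W^{\oplus t}$ the subgroup $1\times H_0^{t-1}$ lies in $\cent H W$ and is far from central. Consequently the bound $|H|\le 60(q-1)$ does not follow from Theorem~\ref{orbit}(ii): the conditions there constrain only the quotient $H/\cent H W$, not $H$. Similarly, the maps $B\to\GL(W_i)$ do not realise $B$ as a subdirect product of copies of $H$; the image of $B$ in $\GL(W_1)$ is $B/(B\cap\cent H W)$, which embeds in $H/\cent H W$, and the other images are conjugate to this one. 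The repair is to replace $|H|$ by $|H/\cent H W|$ throughout: since $\bigcap_i\ker(B\to\GL(W_i))=1$, one gets
\[
|B|\le |H/\cent H W|^{\,t}\le \bigl(60(q-1)\bigr)^{t},
\qquad\text{hence}\qquad
|G|\le t!\cdot\bigl(60(q-1)\bigr)^{t},
\]
after which your arithmetic with $t!\le t^t$ and $t\le 2\sqrt{q-1}$ yields $|G|<q^{3t}=|V|^{3/2}$ exactly as you computed. So the outline is sound and the conclusion is correct, but as written the step through $\cent H W=1$ does not hold and must be rerouted through $H/\cent H W$.
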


\subsection{Bounding $k(G)$}

We now also have to take $k(G)$ into account. 

\begin{thm}
	\label{main3}
	Let $V$ be an irreducible and faithful $FG$-module for some finite group $G$ and finite field $F$ of characteristic $p$ at least $53$. Suppose that $p$ does not divide $|G|$. Then we have at least one of the following. 
	\begin{enumerate}
		\item $n(G,V) \geq 2 \sqrt{p-1} + 1$.
		
		\item $k(G) \geq 2 \sqrt{p-1} + 1$.
		
		\item $|V| = |F| = p$.
		
		\item Case (2/a) of Theorem \ref{orbit} holds with $p=59$ and $1 < t \leq 15$, or $p=61$ and $t=1$, or $61 \leq p \leq 119$ and $2 \leq t \leq 5$.
		
		\item Case (2/b) of Theorem \ref{orbit} holds with $t \leq 4$. 
	\end{enumerate}
\end{thm}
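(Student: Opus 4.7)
The plan is to apply Theorem~\ref{orbit}. Since $q \geq p$, if $n(G,V) \geq 2\sqrt{q-1}+1$ then conclusion~(1) already holds; otherwise Theorem~\ref{orbit} forces one of three exceptional configurations, namely $(G,V) \in \mathcal{C}_q$, case~(2/a), or case~(2/b). For each of these I aim either to push through to conclusion~(2) via a sufficient lower bound on $k(G)$, or else to land in one of the catalogued exceptions~(3),~(4),~(5). Throughout I may use Lemma~\ref{seged} (or Lemma~\ref{order} inside $\mathcal{C}_q$) to take advantage of $|G| < |V|^{3/2}$.

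First I would dispose of the case $(G,V) \in \mathcal{C}_q$ by invoking Lemma~\ref{new}. If $G$ admits an abelian subgroup of index at most $|V|^{1/2}/(2\sqrt{p-1}+1)$, then Lemma~\ref{abelian}, applied together with the standing assumption $n(G,V) < 2\sqrt{p-1}+1$, yields conclusion~(2) directly. Otherwise, by Lemma~\ref{new}(2), $V$ is induced from a one-dimensional module, $|F|=p$, and $G$ has $\mathfrak{A}_n$ or $\mathfrak{S}_n$ as a factor group, where $n = \dim_F V$. The subcase $n = 1$ is exactly conclusion~(3). For $15 \leq n \leq 180$ the bound $p < 8192$ forces $2\sqrt{p-1}+1 < 182$, and I would use $k(G) \geq k(\mathfrak{A}_n)$; since $k(\mathfrak{A}_n) \geq 195$ for $n \geq 18$, this already gives~(2) in that range. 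For the residual $n \in \{15,16,17\}$ I would use $k(G) \geq k(\mathfrak{S}_n)=p(n) \geq 176$ whenever the $\mathfrak{S}_n$-quotient is present, and otherwise refine via the Clifford--Gallagher inequality $k(G) \geq k(G/B) + n(G,B) - 1$ applied to the abelian permutation kernel $B$, combined with the constraint $|G| < |V|^{3/2}$ from Lemma~\ref{order}.

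Finally, for cases~(2/a) and~(2/b) of Theorem~\ref{orbit}, we have $V = \Ind_H^G W$ with $\dim_F W = 2$, so $G$ acts transitively on $t$ two-dimensional blocks, yielding a transitive quotient of $G$ inside $\mathfrak{S}_t$; hence $k(G) \geq k(G/N)$ where $N$ is the permutation kernel. For $t$ sufficiently large relative to $p$, any transitive subgroup of $\mathfrak{S}_t$ carries more than $2\sqrt{p-1}+1$ conjugacy classes and~(2) follows. The residual small values of $t$, combined with the explicit $p$-ranges in Theorem~\ref{orbit}(2/a)--(2/b), are precisely what is recorded in~(4) and~(5). The main obstacle will be the careful numerical bookkeeping in this last step: one must match the admissible $p$-ranges from each exception of Theorem~\ref{orbit} against the smallest $t$ guaranteeing enough classes in every transitive subgroup of $\mathfrak{S}_t$, and verify that the boundaries recorded in~(4) and~(5) are genuinely tight rather than artefacts of the argument.
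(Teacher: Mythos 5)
Your overall strategy — invoke Theorem~\ref{orbit}, and then chase down the three exceptional configurations ($\mathcal{C}_q$, case~(2/a), case~(2/b)) using Lemma~\ref{new}, Lemma~\ref{abelian}, Lemma~\ref{seged}/\ref{order}, the Clifford--Gallagher inequality, and class counts of transitive quotients of $\mathfrak{S}_t$ — is indeed the same skeleton as the paper's, which simply refers to Theorem~7.1 of~\cite{Mar16} and its proof with minor numerical adjustments. So the route is right. However there are genuine gaps in the numerics that you have left unaddressed, and one step as written does not close.

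In the $\mathcal{C}_q$ branch with $\mathfrak{A}_n$ or $\mathfrak{S}_n$ as a quotient, the blanket constraint $p<8192$ gives a target of $2\sqrt{p-1}+1 < 182$, but your proposed bounds do not reach it for the smallest admissible $n$. For $n=15$ one has $p(15)=176<182$, so $k(G)\geq k(\mathfrak{S}_{15})=176$ is strictly insufficient, and the alternating case is worse ($k(\mathfrak{A}_{15})=94$, $k(\mathfrak{A}_{16})$ and $k(\mathfrak{A}_{17})$ are similarly well below $182$; even your stated bound $k(\mathfrak{A}_n)\geq 195$ only for $n\geq 18$ concedes this). You gesture at a repair via Clifford--Gallagher together with $|G|<|V|^{3/2}$, but this is precisely where the actual work lies: Lemma~\ref{new}(2) couples $n$ and $p$ much more tightly than the crude bound $p<8192$ suggests (the permitted $p$ for $n=15,16,17$ is far smaller), and one must exploit that coupling, not just the headline $p<8192$. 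Without that refinement the argument fails at $n=15$.

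Similarly, for cases~(2/a) and~(2/b) your argument reduces to ``for $t$ large enough relative to $p$, a transitive subgroup of $\mathfrak{S}_t$ has more than $2\sqrt{p-1}+1$ classes,'' and then assigns the residual $t$ to conclusions~(4) and~(5). But you never quantify ``large enough,'' nor do you derive the specific cutoffs $t\leq 15$, $t\leq 5$, $t\leq 4$ and the accompanying $p$-ranges $p=59$, $p=61$, $61\leq p\leq 119$, and you say yourself that you have not verified them. Those cutoffs are the content of conclusions~(4) and~(5), so a proof that does not derive them has not proved the theorem; it has only shown that some set of exceptional $(p,t)$ pairs exists. In particular, to recover the stated boundaries one needs to combine the $q$-ranges of Theorem~\ref{orbit}(2/a)--(2/b), the bound $|G|<|V|^{3/2}$ from Lemma~\ref{seged}, Lemma~\ref{l99} applied with $k$ the number of $H_1$-orbits on a two-dimensional block, and Proposition~\ref{PP} to control the order of the top $\mathfrak{S}_t$-quotient; this bookkeeping is exactly what distinguishes the present statement from the weaker one in~\cite{Mar16} (where the cutoffs were $14$ and $4$), and cannot be deferred as a routine afterthought.
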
  

Theorem \ref{main3} is Theorem 7.1 in \cite{Mar16} with the following changes. One occurrence of $59$ is now $53$, in conclusions (1) and (2) the bound $2 \sqrt{p-1}$ is now $2 \sqrt{p-1} + 1$, there is a minor change in conclusion (3), and in conclusion (4) the $14$ became $15$ and the $4$ is now $5$. There are minor changes in the proof of Theorem \ref{main3} with respect to the proof of Theorem 7.1 of \cite{Mar16}. 

\subsection{Bounding $n(G,V)$ and $k(G)$}

Minor changes were made in the corresponding section of \cite{Mar16} to complete the proof of Theorem \ref{module}, using Theorem \ref{main3} and a theorem of Hering \cite[Chapter XII]{Hup-Bla82}.

\section{Known Results on principal blocks}\label{sec:princblockfolklore}
 
In the following we will denote by $B_0(G)$ the principal $p$-block of the group $G$.
Recall that 
$${1}_G \in \irr{B_0(G)}=\{ \chi \in \irr G \ | \ \sum_{{x \in G}\atop{(p, o(x))=1}}\chi(x)\neq 0 \}\, .$$
We collect useful facts about characters in principal $p$-blocks. 

 \begin{lem}\label{lem:princblockabove} Let $G$ be a finite group, let $N$ be a normal subgroup of $G$ and let $p$ be a prime. 
 \begin{enumerate}
 \item $\irr {B_0(G/N)}\sbs \irr{B_0(G)}$.
 \item If $N$ is a group of order coprime to $p$, then $\irr {B_0(G/N)}=\irr{B_0(G)}$.
 \item For any $\theta\in\irr{B_0(N)}$, there is some $\chi\in\irr{B_0(G})$ lying over $\theta$.
 \item If $B_0(G)$ is the only block covering $B_0(N)$, then for any  $\theta\in\irr{B_0(N)}$ every constituent of $\theta^G$ lies in $B_0(G)$. This happens, for instance, if $G/N$ is a $p$-group
 or if $P\cent G P \sbs N$ for some $P \in \syl p G$.
  \item For any  $\chi\in\irr{B_0(G)}$, every constituent of $\chi_N$ lies in $B_0(N)$.
 \end{enumerate}
 \end{lem}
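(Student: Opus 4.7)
The plan is to deduce all five assertions from standard results in $p$-block theory, taking them in the order (i), (v), (ii), (iii), (iv) so that (ii) drops out as a corollary of the first two.

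For (i), I would identify $\Irr(G/N)$ with $\{\chi\in\Irr G : N\subseteq \ker\chi\}$ and observe that under this identification the principal block of $G/N$ corresponds exactly to the characters in $B_0(G)$ whose kernel contains $N$; this is a direct computation with central characters, or a citation of a standard block-theory reference such as Navarro's \emph{Characters and Blocks of Finite Groups}. For (v), the key ingredient is Nagao's theorem, stating that each block of $G$ covers a unique $G$-orbit of blocks of $N$. Since $1_G\in\irr{B_0(G)}$ and $(1_G)_N=1_N\in\irr{B_0(N)}$, the block $B_0(G)$ covers $B_0(N)$; and $B_0(N)$ is $G$-invariant, being the unique block of $N$ containing the $G$-invariant character $1_N$, hence constitutes a singleton $G$-orbit. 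Therefore $B_0(G)$ covers no block of $N$ other than $B_0(N)$, which gives (v).

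Statement (ii) follows at once: when $p\nmid|N|$, every block of $N$ has defect zero and $B_0(N)=\{1_N\}$, so by (v) the restriction $\chi_N$ is a multiple of $1_N$ for every $\chi\in\irr{B_0(G)}$, whence $N\subseteq\ker\chi$ and (i) then yields $\chi\in\irr{B_0(G/N)}$. For (iii) I would invoke the companion half of Nagao's theorem, to the effect that if a block $B$ of $G$ covers a block $b$ of $N$ then every character of $b$ is covered by some character of $B$; applying this to $B=B_0(G)$ and $b=B_0(N)$ produces the required $\chi$. Finally, for (iv), any irreducible constituent of $\theta^G$ with $\theta\in\irr{B_0(N)}$ lies in a block of $G$ that covers $B_0(N)$, and by hypothesis that block must be $B_0(G)$. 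The two sufficient conditions for this uniqueness are classical: it is standard that when $G/N$ is a $p$-group there is a unique block of $G$ covering any given block of $N$, and the case $\cent{G}{P}P\subseteq N$ (for some $P\in\Syl_p(G)$) is a version of Brauer's Third Main Theorem.

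I expect essentially no obstacle here: the lemma is a compilation of well-documented folklore, and the work reduces to citing the correct references cleanly. The only real point of care is distinguishing the two directions of Nagao's theorem used in (iii) and (v), and matching the quoted sufficient conditions in (iv) to their precise formulations in the literature.
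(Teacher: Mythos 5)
Your approach mirrors the paper's in spirit: the paper proves the lemma by assembling standard block-theoretic citations, specifically from Navarro's \emph{Characters and Blocks of Finite Groups} (the definition of block domination for (i), Theorem 9.9(c) for (ii), Theorem 9.4 for (iii), Theorem 9.6 for (iv), and Theorem 9.2 together with Corollary 9.3 for (v)), plus Lemma 1.3 of [RSV21] for the second sufficient condition in (iv). Your treatments of (i), (iii), (iv), and (v) are sound and match those sources in substance; the only quibble is that attributing the $P\cent G P\subseteq N$ case to Brauer's Third Main Theorem is somewhat loose, though it is a closely related result and the paper's cited Lemma 1.3 of [RSV21] is what makes it precise.

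The one genuine gap is in your derivation of (ii) from (v) and (i). Using (v) you correctly conclude that every $\chi\in\irr{B_0(G)}$ satisfies $N\subseteq\ker{\chi}$, hence deflates to some $\bar\chi\in\irr{G/N}$. But you then assert that ``(i) then yields $\chi\in\irr{B_0(G/N)}$,'' and this does not follow: statement (i) is only the containment $\irr{B_0(G/N)}\subseteq\irr{B_0(G)}$, which points the wrong way. Knowing $N\subseteq\ker{\chi}$ tells you that $\chi$ deflates to $G/N$, but not into which block of $G/N$ the deflation lands. What is missing is that $B_0(G)$ dominates a \emph{unique} block of $G/N$: a priori, the set $\{\bar\chi : \chi\in\irr{B_0(G)}\}$ is a union of several blocks of $G/N$, only one of which need be $B_0(G/N)$. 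For $N$ a $p'$-group this uniqueness does hold (for instance because the block idempotent of $B_0(G)$ remains a primitive central idempotent in the quotient group algebra), but it is a separate fact, essentially the content of Navarro's Theorem 9.9(c), and your argument must invoke it rather than (i). As written, (ii) is incomplete.
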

 \begin{proof}
 Part (i) follows from the definition of block domination \cite[p. 198]{Nav98}, (ii) is \cite[Theorem 9.9(c)]{Nav98}, (iii) is \cite[Theorem 9.4]{Nav98}, (iv) is \cite[Theorem 9.6]{Nav98} and \cite[Lemma 1.3]{RSV21}, and (v) follows from \cite[Theorem 9.2 and Corollary 9.3]{Nav98} by noticing that $B_0(N)$ is always $G$-invariant.
 \end{proof}

\begin{lem}\label{lem:direct products}
Assume $G=H_1\times\dots\times H_t$. Then 
$$\Irr(B_0(G))=\{\theta_1\times\dots\times\theta_t\mid\theta_i\in\Irr(B_0(H_i))\}.$$
\end{lem}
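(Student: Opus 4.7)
The plan is to proceed by induction on $t$, which reduces the statement to the case $t=2$. Fix therefore $G = H_1 \times H_2$, and recall the standard fact that every $\chi \in \Irr(G)$ factors uniquely as an outer tensor product $\chi = \theta_1 \times \theta_2$ with $\theta_i \in \Irr(H_i)$; it remains only to identify which of these products lie in $B_0(G)$.

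For the inclusion ``$\subseteq$'', suppose $\chi = \theta_1 \times \theta_2 \in \Irr(B_0(G))$. Viewing $H_1 \cong H_1 \times 1$ as a normal subgroup of $G$, the restriction of $\chi$ to $H_1$ equals $\theta_2(1)\cdot\theta_1$, whose unique irreducible constituent is $\theta_1$. Lemma~\ref{lem:princblockabove}(v) then forces $\theta_1 \in \Irr(B_0(H_1))$, and symmetrically $\theta_2 \in \Irr(B_0(H_2))$.

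For the inclusion ``$\supseteq$'', I would invoke Brauer's central character criterion for block membership: fixing a maximal ideal $\mathfrak{p}$ above $p$ in the ring of algebraic integers of a sufficiently large cyclotomic field, a character $\chi \in \Irr(G)$ lies in $B_0(G)$ if and only if $\omega_\chi(\widehat{K}) \equiv \omega_{1_G}(\widehat{K}) \pmod{\mathfrak{p}}$ for every conjugacy class $K$ of $G$, where $\widehat{K}$ denotes the associated class sum in $\CC G$. The conjugacy classes of $G = H_1 \times H_2$ are precisely the products $K_1 \times K_2$ with $K_i$ a class of $H_i$, and a short direct computation gives the factorization
\[
\omega_{\theta_1 \times \theta_2}(\widehat{K_1 \times K_2}) = \omega_{\theta_1}(\widehat{K_1}) \cdot \omega_{\theta_2}(\widehat{K_2}),
\]
and similarly for $\omega_{1_G}$. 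Multiplying the block congruences $\omega_{\theta_i}(\widehat{K_i}) \equiv \omega_{1_{H_i}}(\widehat{K_i}) \pmod{\mathfrak{p}}$ (valid because $\theta_i \in \Irr(B_0(H_i))$ for $i=1,2$) then yields the desired congruence for $\theta_1 \times \theta_2$ on every conjugacy class of $G$.

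This lemma is a well-known folklore consequence of the block decomposition $\CC G \cong \CC H_1 \otimes \CC H_2$ for direct products; the only mild subtlety is verifying the factorization of central characters on product classes, which is a routine calculation. I anticipate no substantive obstacle.
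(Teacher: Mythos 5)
Your proof is correct but takes a different route from the paper. The paper's proof is a one-liner: it uses the characterization of the principal block recorded at the start of Section~3, namely that $\chi\in\Irr(B_0(G))$ if and only if $\sum_{x\in G,\ p\nmid o(x)}\chi(x)\neq 0$. Since an element $(x_1,\dots,x_t)$ of $G$ has $p'$-order exactly when each $x_i$ does, this sum factors as $\prod_i\sum_{x_i\in H_i,\ p\nmid o(x_i)}\theta_i(x_i)$, which is nonzero if and only if each factor is nonzero; both inclusions follow at once. You instead invoke the Brauer central character criterion for the inclusion ``$\supseteq$'', factoring $\omega_{\theta_1\times\theta_2}$ over product classes, and a separate argument via Lemma~\ref{lem:princblockabove}(v) for ``$\subseteq$''. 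Both of your ingredients are sound and the computation $\omega_{\theta_1\times\theta_2}(\widehat{K_1\times K_2})=\omega_{\theta_1}(\widehat{K_1})\,\omega_{\theta_2}(\widehat{K_2})$ is right, but there is mild redundancy: the central character congruence is already an if-and-only-if test, so it handles ``$\subseteq$'' too, making the appeal to Lemma~\ref{lem:princblockabove}(v) unnecessary. The paper's approach is shorter precisely because the $p$-regular trace sum criterion is manifestly multiplicative on direct products, whereas the central character route asks you to work one conjugacy class at a time.
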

\begin{proof}
This follows directly from the definition of principal block.
\end{proof}

 \begin{lem}[Alperin--Dade]\label{lem:alpdade}
 Let $N\lhd G$ and $p$ be a prime. Assume that $G/N$ is a group of order coprime to $p$ and that $G=N\cent{G}{P}$ for some $P \in \syl p N$. Then restriction yields a bijection between $\irr{B_0(G)}$ and $\irr{B_0(N)}$.
 \end{lem}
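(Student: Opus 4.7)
The plan is to prove that restriction $\chi \mapsto \chi|_N$ induces a bijection $\irr{B_0(G)} \to \irr{B_0(N)}$, by showing that every $\theta \in \irr{B_0(N)}$ is $G$-invariant and admits a unique extension lying in $B_0(G)$. First I would establish the local structure: since $|G:N|$ is coprime to $p$, any $P \in \syl p N$ is also a Sylow $p$-subgroup of $G$, and the Frattini argument combined with the hypothesis $G = N\cent G P$ yields $\norm G P = \norm N P \cent G P$ and $G/N \cong \cent G P/\cent N P$, which is itself a $p'$-group.

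The key step is the $G$-invariance of characters in $B_0(N)$; by the reduction above, it suffices to verify $\cent G P$-invariance. For this I would invoke an Alperin-type argument via the Brauer correspondence: Brauer's third main theorem identifies $B_0(N)$ with $B_0(\norm N P)$ as Brauer correspondents, and since $\cent G P$ centralizes $P$ and hence acts trivially on $\norm N P/\cent N P$, compatibility of the Brauer correspondence with the $\cent G P$-action forces pointwise stabilization of $\irr{B_0(N)}$ by $\cent G P$. I expect this to be the main obstacle, as it depends essentially on the hypothesis $G = N\cent G P$ and on character-level refinements of Brauer's main theorems.

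Granted $G$-invariance, Gallagher's theorem gives that $\theta$ extends to $G$, with $|G:N|$ extensions of the form $\{\hat\theta\lam : \lam \in \irr{G/N}\}$ for a fixed extension $\hat\theta$. A central character calculation then shows $\hat\theta\lam \in \irr{B_0(G)}$ iff $(\lam(x)-1)|K|\equiv 0$ modulo a prime above $p$ for every $p$-regular class sum $K^+$ with representative $x$; equivalently, iff $\lam(x)=1$ for all $p$-regular $x \in \cent G P$. Since every coset of $\cent N P$ in $\cent G P$ is represented by such an $x$ (the quotient being a $p'$-group), this forces $\lam = 1_{G/N}$, so exactly one extension $\hat\theta$ lies in $B_0(G)$.

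Finally, the assignment $\Psi : \theta \mapsto \hat\theta$ is injective by construction. For surjectivity, let $\chi \in \irr{B_0(G)}$; by Lemma \ref{lem:princblockabove}(v), every constituent of $\chi|_N$ lies in $\irr{B_0(N)}$, and these are $G$-invariant by the key step. Clifford's theorem then gives $\chi|_N = e\theta$ for some $\theta \in \irr{B_0(N)}$; since $\theta$ extends to $G$, Gallagher forces $e=1$, so $\chi$ is itself an extension of $\theta$, and by the uniqueness above $\chi = \Psi(\theta)$. The inverse of $\Psi$ is then restriction, completing the bijection.
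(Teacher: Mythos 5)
The paper does not reprove this lemma at all: its ``proof'' is a citation of Alperin \cite{alperin76} for solvable $G/N$ and Dade \cite{dade77} in general. Your attempt to supply a self-contained argument therefore goes well beyond the paper, and it has genuine gaps.

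The critical gap is the step you yourself flag as ``the main obstacle.'' You claim that, since $\cent G P$ acts trivially on $\norm N P/\cent N P$, ``compatibility of the Brauer correspondence with the $\cent G P$-action forces pointwise stabilization of $\irr{B_0(N)}$.'' This does not follow. Brauer's first and third main theorems give a bijection between \emph{blocks} of $N$ with defect group $P$ and blocks of $\norm N P$ with defect group $P$; they do not supply a canonical bijection between $\irr{B_0(N)}$ and $\irr{B_0(\norm N P)}$, so there is no ``character-level compatibility'' to invoke. Moreover, the premise is weaker than you need: $\cent G P$ can act nontrivially on $\cent N P$ and hence on $\irr{\norm N P}$, so triviality of the action on the quotient $\norm N P/\cent N P$ does not by itself give triviality of the action on $\irr{B_0(\norm N P)}$ either. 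In fact, the $G$-invariance of every $\theta\in\irr{B_0(N)}$ is a \emph{consequence} of the theorem (a unique $\chi\in\irr{B_0(G)}$ with $\chi|_N=\theta$ must be an extension of $\theta$, and extensions exist only for invariant characters), so asserting it without an independent argument is essentially assuming what is to be proved.

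A second gap: Gallagher's theorem does not give that a $G$-invariant $\theta$ extends; it only classifies $\irr{G\mid\theta}$ once an extension $\hat\theta$ is known to exist. Extendibility over a $p'$-quotient is not automatic --- the obstruction class in $H^2(G/N,\CC^{\times})$ can be nontrivial for $p'$-groups $G/N$ --- so this needs its own argument, presumably exploiting that $\theta$ lies in $B_0(N)$ and that $G=N\cent G P$. Your later step ``Gallagher forces $e=1$'' has the same issue: Gallagher gives $\chi|_N=\lam(1)\theta$, and $e=1$ requires knowing the relevant $\lam$ is linear, which you only deduce from the (unestablished) claim that the unique $\lam$ putting $\hat\theta\lam$ in $B_0(G)$ is $1_{G/N}$. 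The central-character computation sketch is plausible in outline, but it sits downstream of the two unjustified steps above.

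In short, the overall plan (invariance, existence and uniqueness of a $B_0$-extension, then Clifford) is the right shape of statement, but the two pivotal steps are precisely the hard content of Alperin--Dade, and the justifications offered for them do not hold up. The paper sidesteps all of this by citing the original references.
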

 \begin{proof}
 This is proved by Alperin \cite{alperin76} when $G/N$ is solvable, and by Dade \cite{dade77} in general. 
 \end{proof}

We also collect some results concerning the action of Galois automorphisms of $p$-power order on principal blocks. 
If $P$ is a $p$-group, then observe that a linear character $\lambda\in\Lin(P)$ is $\sigma$-invariant if and only if it has order $p$ or $1$. This happens if and only if $\Phi(P)\sbs\ker\lambda$. 

\begin{lem}\label{lem:normalsylow}
Let $G$ be a finite group of order divisible by $p$ and assume $G$ has a normal Sylow $p$-subgroup $P$. Then $\Irrg{B_0(G)}=\Irr({G/\oh{p'}G\Phi(P)})$.
\end{lem}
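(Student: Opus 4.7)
The plan is to first reduce to the case $\oh{p'}{G}=1$ via Lemma~\ref{lem:princblockabove}(ii), using that $\Phi(P)\oh{p'}{G}/\oh{p'}{G}$ is canonically the Frattini subgroup of the Sylow $p$-subgroup of $G/\oh{p'}{G}$. In this reduced setting, $G=P\rtimes H$ is $p$-solvable with $F^{*}(G)=\oh{p}{G}=P$, whence $\cent{G}{P}\sbs P$. Applying Lemma~\ref{lem:princblockabove}(iv) with $N=P$ (note $PC_G(P)=P$), and using that the $p$-group $P$ has a unique $p$-block, every irreducible constituent of $\theta^G$ for $\theta\in\Irr(P)$ lies in $B_0(G)$; since Clifford's theorem realises every $\chi\in\Irr(G)$ as such a constituent, I would conclude $\Irr(B_0(G))=\Irr(G)$. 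This reduces the lemma to the character-theoretic identity $\Irrg{G}=\Irr(G/\Phi(P))$.

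For the inclusion $\Irr(G/\Phi(P))\sbs\Irrg{G}$, set $\bar G=G/\Phi(P)$, with normal elementary abelian Sylow $p$-subgroup $\bar P$. By Ito's theorem applied to $\bar P$, every $\chi\in\Irr(\bar G)$ has degree dividing $|\bar G:\bar P|=|G:P|$, hence coprime to $p$. Since $\exp(\bar G)$ divides $p\cdot\exp(H)$, the values of $\chi$ lie in $\QQ_{p\exp(H)}$; as $\sigma(\zeta_p)=\zeta_p^{1+p}=\zeta_p$ and $\sigma$ fixes roots of unity of $p'$-order by definition, $\sigma$ acts trivially on this field, so $\chi^{\sigma}=\chi$.

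For the reverse inclusion $\Irrg{G}\sbs\Irr(G/\Phi(P))$, let $\chi\in\Irrg{G}$. By Clifford theory relative to $P$, $\chi$ lies over some $\lambda\in\Irr(P)$, and the $p'$-degree hypothesis forces $\lambda$ to be linear. Since $\chi=\chi^{\sigma}$ lies over both $\lambda$ and $\lambda^{\sigma}=\lambda^{1+p}$, and since $P$ acts trivially on $\Lin(P)$, there exists $h\in H$ with $\lambda^{1+p}=\lambda^{h}$; writing $\Lin(P)$ additively as a $\ZZ[H]$-module, this becomes the relation $(h-1)\lambda=p\lambda$. The technical heart, and the step I expect to be the main obstacle, is then to exploit the coprime action of $H$ on the abelian $p$-group $\Lin(P)$: the decomposition $\Lin(P)=\ker(h-1)\oplus\Im(h-1)$ together with invertibility of $h-1$ on $\Im(h-1)$ makes the above relation split into components yielding $p\lambda_1=0$ and $\lambda_2\in p^{k}\Lin(P)$ for all $k\ge 1$, where $\lambda=\lambda_1+\lambda_2$; finiteness of $\Lin(P)$ then forces $\lambda_2=0$ and hence $\lambda^{p}=1$. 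Consequently $\Phi(P)=P^{p}P'\sbs\ker\lambda$, and because $\Phi(P)$ is characteristic in $P$, it lies in the kernel of every $G$-conjugate of $\lambda$, so $\Phi(P)\sbs\ker\chi$ and $\chi\in\Irr(G/\Phi(P))$.
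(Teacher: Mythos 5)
Your argument is correct, and it is a genuinely self-contained proof where the paper itself simply cites \cite[Lemma 2.2(a)]{Riz-Sch-Val20}. The overall structure (reduce to $\oh{p'}{G}=1$ via Lemma~\ref{lem:princblockabove}(ii); use $\cent{G}{P}\subseteq P$ and Lemma~\ref{lem:princblockabove}(iv) with $N=P$ to conclude $\Irr(B_0(G))=\Irr(G)$; Ito and the exponent bound for one inclusion; Clifford theory and $\lambda^{\sigma}=\lambda^{1+p}$ for the other) is sound, and the details you cite (the Hall--Higman type containment $\cent{G}{P}\subseteq\oh{p}{G}$ for the $p$-solvable $G$, $\exp(\bar G)\mid p\exp(H)$, the Frattini subgroup of $P\oh{p'}{G}/\oh{p'}{G}$ being $\Phi(P)\oh{p'}{G}/\oh{p'}{G}$, and $\Phi(P)\trianglelefteq G$ forcing $\Phi(P)\subseteq\ker\chi$) all check out. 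The step you flag as the ``technical heart'' also works as stated --- on $\Im(h-1)$ the map $h-1$ is injective, hence bijective by finiteness, and the iteration $\lambda_2\in p^k\Lin(P)$ kills $\lambda_2$ --- but it can be replaced by a shorter computation that avoids the Fitting decomposition entirely: writing $n=|h|$ (coprime to $p$), the relation $h\lambda=(1+p)\lambda$ iterates to $\lambda=h^n\lambda=(1+p)^n\lambda$, so $\bigl((1+p)^n-1\bigr)\lambda=0$; since $(1+p)^n-1=pu$ with $u\equiv n\pmod p$ coprime to $p$, and $u$ acts invertibly on the finite abelian $p$-group $\Lin(P)$, one gets $p\lambda=0$ directly, i.e.\ $\lambda^{p}=1$. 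Either way the conclusion $\Phi(P)\subseteq\ker\lambda$ follows, and your proof is complete and correct.
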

\begin{proof}
This is \cite[Lemma 2.2(a)]{Riz-Sch-Val20}.
\end{proof}

For a finite group $G$,
let $\tau$ be an automorphism of the cyclotomic field $\QQ_{|G|}$, assume that
$\tau$ fixes all $p'$-roots of unity in $\QQ_{|G|}$ and that $\tau$ has $p$-power order. If $p=2$, the $p$-power order condition is superfluous as the group ${\rm Gal}(\mathbb Q_{|G|}/\mathbb Q_{|G|_{2'}})$ is a 2-group. If $p$ is odd,
then $\tau \in \langle \sigma \rangle$, where $\sigma$ is as defined in the Introduction.

 \begin{lem}\label{lem:p'abelianquotsigmafixed}
Let $p$ be a prime and let $N\lhd G$. Assume that $G/N$ is a group of order coprime to $p$. 
If  $\chi \in \irr G$ and $\theta \in \irr N$ are such that $[\chi_N, \theta]\neq 0$, then
$\chi$ is $\tau$-fixed if, and only if, $\theta$ is 
$\tau$-fixed
\end{lem}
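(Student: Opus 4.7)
The plan is to prove both implications separately. The forward direction uses a coprime orbit-size argument, while the converse requires successive reductions via Clifford correspondence and a character triple isomorphism, followed by a projective-representation calculation.

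For $(\Rightarrow)$: suppose $\chi^\tau = \chi$. Since restriction commutes with the $\tau$-action, $\chi_N = (\chi^\tau)_N$, so the set of irreducible constituents of $\chi_N$ is $\langle \tau \rangle$-stable. In particular $\theta^\tau$ is a constituent of $\chi_N$, hence $G$-conjugate to $\theta$ by Clifford's theorem. The $G$-orbit of $\theta$ in $\Irr(N)$ has size dividing $|G/N|$, a $p'$-number, while the $\langle \tau \rangle$-orbit of $\theta$, being contained in that $G$-orbit, has $p$-power size since $\tau$ has $p$-power order. The only positive integer dividing both a $p$-power and a $p'$-number is $1$, so $\theta^\tau = \theta$.

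For $(\Leftarrow)$: suppose $\theta^\tau = \theta$. First I would reduce to the case where $\theta$ is $G$-invariant via Clifford correspondence: with $T = G_\theta$ and $\psi \in \Irr(T|\theta)$ the Clifford correspondent of $\chi$, we have $(\psi^\tau)^G = \chi^\tau$ and $\psi^\tau \in \Irr(T|\theta)$, so $\chi^\tau = \chi$ iff $\psi^\tau = \psi$; moreover $|T/N|$ is still coprime to $p$. So assume $\theta$ is $G$-invariant. Next, applying a Galois-equivariant version of the character triple isomorphism (for instance, as developed in Navarro's monograph on the McKay conjecture), we may replace $(G,N,\theta,\chi)$ by an isomorphic triple in which $N \le \Z(G)$ and $\theta$ is linear and faithful, preserving $\tau$-invariance throughout. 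In this setting, $\chi \in \Irr(G|\theta)$ corresponds to an irreducible projective representation of the $p'$-group $G/N$ with cocycle $\alpha$ coming from $\theta$. The class of $\alpha$ in $H^2(G/N,\CC^\times)$ has order dividing $|G/N|$, and so is coprime to $p$; hence these projective representations lift to ordinary representations of a central extension $\widehat{G/N}$ of $G/N$ by a cyclic $p'$-group, which is itself a $p'$-group. Characters of $\widehat{G/N}$ take values in $p'$-cyclotomic fields, which are fixed pointwise by $\tau$. Combined with the $\tau$-invariance of $\theta$, every value of $\chi$ is $\tau$-fixed, so $\chi^\tau = \chi$.

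The main technical obstacle is confirming that the character triple isomorphism reduction can be carried out $\tau$-equivariantly; this is standard but delicate. An alternative route avoiding this reduction is to work directly with the projective representation of $G/N$ afforded by a chosen representation of $N$ realizing $\theta$, tracking the $p'$-order cocycle explicitly without passing through central character triples.
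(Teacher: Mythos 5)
Your proof takes a genuinely different route from the paper's in both directions, and is essentially sound, modulo two small points worth flagging.

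For the forward direction, the paper passes to the Clifford correspondent $\psi\in\Irr(G_\theta\mid\theta)$ and argues via field theory: $\tau|_{\QQ(\psi)}\in\Gal(\QQ(\psi)/\QQ(\chi))$ has $p$-power order, and \cite[Lemma 2.1(ii)]{Nav-Tie21} forces $\tau|_{\QQ(\psi)}=1$, hence $\psi^\tau=\psi$ and $\theta^\tau=\theta$. Your coprime-orbit argument is more elementary and self-contained, which is a genuine plus. However, as written there is a small gap: being contained in the $G$-orbit $\mathcal{O}$ does not by itself imply that the $\langle\tau\rangle$-orbit size divides $|\mathcal{O}|$. What saves you is that $G$-conjugation and the $\tau$-action on $\Irr(N)$ commute, so every $g\in G$ permutes the $\langle\tau\rangle$-orbits within $\mathcal{O}$; combined with the transitivity of $G$ on $\mathcal{O}$, all $\langle\tau\rangle$-orbits in $\mathcal{O}$ have the same size, which therefore divides $|\mathcal{O}|=[G:G_\theta]$, a $p'$-number. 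Since that size also divides the $p$-power $|\langle\tau\rangle|$, it is $1$. You should spell out this commuting-actions step; the version you wrote reads as if containment alone gave divisibility.

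For the converse, the paper is terse: it simply invokes \cite[Lemma 5.1]{Nav-Tie19}. Your sketch—Clifford reduction to $\theta$ being $G$-invariant, then a Galois-compatible character triple isomorphism to $N$ central and $\theta$ linear faithful, then the cocycle-order argument showing the projective representation of the $p'$-group $G/N$ lifts to a $p'$-central extension whose characters lie in $p'$-cyclotomic fields—is essentially a reconstruction of the argument behind that cited lemma. The reduction to $\theta$ $G$-invariant is fine. The real content, as you yourself note, is that the character triple isomorphism must preserve the $\tau$-action on both levels; this is precisely the technical content one would need to verify (and it is carried out in the Navarro--Tiep reference and in Navarro's McKay monograph). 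So your plan is a plausible route to a self-contained proof, but it is not yet a complete one: the Galois-equivariance of the triple isomorphism is stated as needing confirmation rather than being proved. As written it would be fair to call it a proof by appeal to the same known result, packaged differently. The net comparison: your forward direction is more elementary than the paper's (once the orbit-divisibility step is justified), while your converse is doing more work than the paper only in the sense of unpacking the black box the paper cites.
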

\begin{proof}
Let $G_\theta$ be the inertia group of $\theta$ and $\psi \in \irr{G_\theta | \theta}$ be the Clifford correspondent of $\chi$. 
If $\chi^\tau=\chi$, then $\tau|_{\mathbb Q(\psi)}\in {\rm Gal}(\Q(\psi)/\Q(\chi))$ has $p$-power order. By \cite[Lemma 2.1(ii)]{Nav-Tie21}, $\tau|_{\Q(\psi)}=1$ so $\psi^\tau=\psi$ and consequently
$\theta^\tau=\theta$. 
The reciprocal implication follows by direct application of \cite[Lemma 5.1]{Nav-Tie19}.
\end{proof}

\begin{cor}\label{cor:p'abelianquotsigmafixed}
Let $N \lhd G$ and $p$ be a prime. Assume that $G/N$ is a group of order coprime to $p$. 
\begin{enumerate}
\item If $\chi \in \irr{B_0(G)}$ is $\tau$-fixed, then every constituent $\theta \in \irr{B_0(N)}$ of $\chi_N$ is $\tau$-fixed.
\item If $\theta \in \irr{B_0(N)}$ is $\tau$-fixed then some constituent $\chi \in \irr{B_0(G)}$ of $\theta^G$ is $\tau$-fixed.
 \end{enumerate}
 Moreover, with the above notation, $\theta$ has degree coprime to $p$ if, and only if, $\chi$ has degree coprime to $p$.
\end{cor}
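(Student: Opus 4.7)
The proof should be almost a direct assembly of the preceding results. The plan is as follows.

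For part (i), I would start with a $\tau$-fixed $\chi\in\irr{B_0(G)}$ and take any irreducible constituent $\theta$ of $\chi_N$. By Lemma \ref{lem:princblockabove}(v) applied to the principal block, $\theta$ automatically lies in $\irr{B_0(N)}$. Since $[\chi_N,\theta]\neq 0$ and $G/N$ is a $p'$-group, Lemma \ref{lem:p'abelianquotsigmafixed} then yields $\theta^\tau=\theta$ directly.

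For part (ii), given a $\tau$-fixed $\theta\in\irr{B_0(N)}$, I would invoke Lemma \ref{lem:princblockabove}(iii) to produce some $\chi\in\irr{B_0(G)}$ with $[\chi_N,\theta]\neq 0$. A single application of Lemma \ref{lem:p'abelianquotsigmafixed}, read in the opposite direction, forces $\chi^\tau=\chi$. Thus one obtains the required $\tau$-fixed $\chi$ in the principal block above $\theta$.

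For the final "moreover" assertion, I would rely on the standard Clifford-theoretic fact that whenever $[\chi_N,\theta]\neq 0$, the ratio $\chi(1)/\theta(1)$ divides $|G:N|$. Since $|G:N|$ is coprime to $p$ by hypothesis, the $p$-parts of $\chi(1)$ and $\theta(1)$ coincide, so $p\nmid\chi(1)$ if and only if $p\nmid\theta(1)$. I do not anticipate any genuine obstacle here: the real content is packed into Lemmas \ref{lem:princblockabove} and \ref{lem:p'abelianquotsigmafixed}, and the corollary is essentially the statement that those two results interact nicely for block-theoretic purposes.
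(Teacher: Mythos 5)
Your proof is correct and follows exactly the same route as the paper, which simply observes that the corollary is a direct consequence of Lemma~\ref{lem:p'abelianquotsigmafixed} (the biconditional on $\tau$-fixedness), Lemma~\ref{lem:princblockabove}(iii) and (v) (to keep the characters in the principal blocks), and the fact from Clifford theory that $\chi(1)/\theta(1)$ divides $|G:N|$ when $[\chi_N,\theta]\ne 0$. No gaps; you have just spelled out what the paper compresses into one sentence.
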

\begin{proof}
This follows directly from Lemmas \ref{lem:p'abelianquotsigmafixed} and \ref{lem:princblockabove}, together with the fact that $\chi(1)/\theta(1)$ divides $|G/N|$ for any $\chi$ and $\theta$ such that $[\chi_N, \theta]\neq 0$ by \cite[Theorem 5.12]{Nav18}.
\end{proof}

The following is \cite[Lemma 2.4]{Riz-Sch-Val20}. We give a proof here for completeness. 

\begin{lem}\label{lem:Cargument}
Let $p$ be a prime and let $H\leq G$. Assume that $p\nmid|G:H|$ and $\cent G Q\sbs H$ where $Q\in\Syl_p(H)$. Let $\theta\in\Irr_{p',\sigma}(B_0(H))$. Then there is some $\chi\in\Irr_{p',\sigma}(B_0(G))$ contained in $\theta^G$.
\end{lem}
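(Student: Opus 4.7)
The plan is to induce $\theta$ to $G$ and extract the desired $\chi$ from the constituents of $\theta^G$ by combining a $\sigma$-orbit counting argument modulo $p$ with Brauer's Third Main Theorem. Since $p\nmid|G:H|$, $Q$ is a Sylow $p$-subgroup of $G$, and together with $Q\subseteq H$ the hypothesis gives $QC_G(Q)\subseteq H$. Write $\theta^G=\sum_{\chi\in\Irr(G|\theta)}n_\chi\,\chi$ with $n_\chi=[\chi_H,\theta]$ by Frobenius reciprocity. Because $\theta^\sigma=\theta$ one has $(\theta^G)^\sigma=(\theta^\sigma)^G=\theta^G$, so $\langle\sigma\rangle$ acts on $\Irr(G|\theta)$ preserving the multiplicities $n_\chi$, the degrees $\chi(1)$, and the $p$-block of each constituent (since $\sigma$ fixes values on $p$-regular classes, and blocks are determined by central characters on $p'$-class sums). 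All orbits of $\langle\sigma\rangle$ on this set have $p$-power size, as $\sigma$ has $p$-power order on $\Q_{|G|}$.

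Reading the degree identity $|G:H|\theta(1)=\sum_\chi n_\chi\chi(1)$ modulo $p$: the $\sigma$-orbits of size greater than $1$ contribute $0$, and so do those constituents $\chi$ with $p\mid\chi(1)$. What survives is
\[
\sum_{\chi\in\Irr_{p',\sigma}(G|\theta)} n_\chi\,\chi(1)\;\equiv\;|G:H|\theta(1)\not\equiv 0\pmod p,
\]
which forces the existence of some $\chi\in\Irr_{p',\sigma}(G|\theta)$ with $n_\chi>0$.

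It remains to show that any such $\chi$ lies in $B_0(G)$. A $p'$-degree irreducible character has defect group a Sylow $p$-subgroup of $G$; after conjugating within $G$ (which does not change the block) we may assume the defect group of the block $B$ of $\chi$ is $Q$ itself. The hypothesis $QC_G(Q)\subseteq H$ is exactly what is required for Brauer's Third Main Theorem: block induction $b\mapsto b^G$ is defined on blocks $b$ of $H$ with defect group $Q$, and $b=B_0(H)\iff b^G=B_0(G)$. The character-theoretic companion of this statement (a standard consequence of the Brauer correspondence; see e.g.\ Navarro's monograph on characters and blocks) then says that whenever $\chi\in B$ has defect group $Q$ and $[\chi_H,\theta]>0$ for some $\theta\in\Irr(b)$, one has $B=b^G$. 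Specializing to $b=B_0(H)$ yields $\chi\in B_0(G)$. This last step — the character-level invocation of Brauer's Third Main Theorem — is the main obstacle; the rest of the argument is a clean modular orbit count exploiting that $\theta^G(1)=|G:H|\theta(1)$ is coprime to $p$.
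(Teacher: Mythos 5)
The modular $\sigma$-orbit count in the middle of your argument is sound: using $\theta^G(1)=|G:H|\theta(1)\not\equiv 0\pmod p$, that the orbits of $\langle\sigma\rangle$ on constituents have $p$-power size, and that $\sigma$ preserves multiplicities and degrees, you correctly deduce that some $\sigma$-fixed constituent of $p'$-degree occurs in $\theta^G$. The gap is in the last paragraph, where you try to show that any such $\chi$ automatically lies in $B_0(G)$. The ``character-theoretic companion'' of Brauer's Third Main Theorem that you invoke --- that $[\chi_H,\theta]\neq 0$ with $\theta\in\Irr(b)$ and $\chi$ of $p'$-degree forces $B(\chi)=b^G$ --- is not a theorem. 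Brauer's Third Main Theorem identifies $B_0(H)^G=B_0(G)$, but it says nothing about where an arbitrary constituent of $\theta^G$ lives; in general $\theta^G$ has constituents in several blocks, and $\chi_H$ can have constituents in several blocks of $H$ simultaneously (here $H$ is not normal, so there is no Clifford-theoretic control). Since $G$ can have non-principal blocks of full defect containing $\sigma$-fixed characters of $p'$-degree, your orbit count over all of $\Irr(G\mid\theta)$ can in principle land on such a character, and nothing you have written rules this out.

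The paper's proof reverses the order of the two steps and thereby sidesteps the issue. It first notes $B_0(H)^G=B_0(G)$ (Nav98, Thm.~6.7) and then invokes Nav98, Cor.~6.4: if $b^G$ is defined and $\theta\in\Irr(b)$ has $p\nmid\theta^G(1)$, then the $b^G$-part of $\theta^G$ still has degree coprime to $p$, i.e.\
\[
\sum_{\chi\in\Irr_{p'}(B_0(G))} [\chi_H,\theta]\,\chi(1)\not\equiv 0 \pmod p.
\]
Only then does it run the $\sigma$-orbit argument, restricted to the $B_0(G)$-constituents, which guarantees the resulting $\chi$ lies in the principal block. To repair your proof, replace the global degree identity $\theta^G(1)=\sum_\chi n_\chi\chi(1)$ by the congruence above (which requires citing Cor.~6.4 or equivalent, not a nonexistent converse to BTM3), and run the orbit count on $\Irr_{p',\sigma}(B_0(G)\mid\theta)$ rather than on all of $\Irr_{p',\sigma}(G\mid\theta)$.
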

\begin{proof}
Notice that $B_0(H)$ is admissible in the sense of \cite[p. 213]{Nav98}, so $B_0(H)^G$ is defined. By \cite[Theorem 6.7]{Nav98}, we have that $B_0(H)^G=B_0(G)$. Write $B_0=B_0(G)$  and
$$(\theta^G)_{B_0}=\sum_{\chi\in\Irr_{p'}(B_0)}a_\chi\chi+\sum_{\psi\in\Irr(B_0)\atop p\mid\psi(1)}a_\psi\psi\, ,$$
where $a_\chi=[\chi, \theta^G]$. 
As $p\nmid |G:H|\theta(1)=\theta^G(1)$, \cite[Corollary 6.4]{Nav98} implies that $$\sum_{\chi\in\Irr_{p'}(B_0)}a_\chi\chi(1)\not\equiv 0 \mod p.$$ Now let $\mathcal{O}_1,\dots,\mathcal{O}_t$ be the $\langle \sigma\rangle$-orbits on the set of characters in $\Irr_{p'}(B_0)$ lying over $\theta$. Since $\theta^G$ is $\sigma$-invariant, we have that if two characters $\chi,\eta\in\Irr_{p'}(B_0)$ lying over $\theta$ are $\langle \sigma\rangle$-conjugate then $a_\chi=a_\eta$. Thus if $\chi_i\in\mathcal{O}_i$ we have
$$\sum_{\chi\in\Irr_{p'}(B_0)}a_\chi\chi(1)=\sum_{i=1}^t|\mathcal{O}_i|a_{\chi_i}\chi_i(1)$$
is not divisible by $p$. Since $\langle \sigma\rangle$ is a $p$-group, this forces one of the $\mathcal{O}_i$ to have size $1$, as desired.
\end{proof}

Finally, we give an alternative proof of \cite[Lemma 2.5]{Riz-Sch-Val20}.

\begin{lem}\label{lem:G=PN}
Let $N\lhd G$ and $p$ be a prime. Assume $N\cong S^t$ is a minimal normal subgroup of $G$, where $S$ is a simple nonabelian group, and that $G/N$ is a $p$-group. Assume there is some $X$-invariant $\alpha\in\Irrg{B_0(S)}$ for $X/S\in\Syl_p(\Aut(S)/S)$. Then there is some $\chi\in\Irrg{B_0(G)}$ not containing $N$ in its kernel.
\end{lem}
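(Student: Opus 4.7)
My plan is to construct a $G$-invariant, nontrivial character $\tilde\alpha\in\Irr_{p',\sigma}(B_0(N))$, and then extend it to the desired $\chi\in\Irr_{p',\sigma}(B_0(G))$. Since the conclusion requires $N\not\subseteq\ker\chi$, it is implicit that $\alpha\neq 1_S$, which I assume throughout.

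Write $N=S_1\times\cdots\times S_t$ with $S_i\cong S$, so $\Aut(N)\cong\Aut(S)\wr\sym_t$ and $\Inn(N)\cong N$. Let $\overline G\leq\Aut(N)$ be the image of $G$ under the conjugation action; then $\Inn(N)\leq\overline G$ and $\overline G/\Inn(N)$ is a $p$-group because $G/N$ is. Consider the diagonal character $\alpha^{\otimes t}\in\Irr(N)$, whose stabilizer in $\Aut(N)$ is $H:=\Stab_{\Aut(S)}(\alpha)\wr\sym_t$. Since $X\leq\Stab_{\Aut(S)}(\alpha)$ and $X/\Inn(S)$ is a Sylow $p$-subgroup of $\Out(S)$, the index $[\Out(S):\Stab_{\Aut(S)}(\alpha)/\Inn(S)]$ is coprime to $p$, so
\[ [\Out(N):H/\Inn(N)] = [\Out(S):\Stab_{\Aut(S)}(\alpha)/\Inn(S)]^t \]
is coprime to $p$ as well. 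Thus $H/\Inn(N)$ contains a Sylow $p$-subgroup of $\Out(N)$. Since $\overline G/\Inn(N)$ is a $p$-subgroup of $\Out(N)$, Sylow's theorem in $\Out(N)$ (working modulo $\Inn(N)\leq H$) yields $z\in\Aut(N)$ with $\overline G^z\leq H$. Setting $\tilde\alpha:=(\alpha^{\otimes t})^{z^{-1}}\in\Irr(N)$, we have $\overline G\leq H^{z^{-1}}=\Stab_{\Aut(N)}(\tilde\alpha)$, so $\tilde\alpha$ is $G$-invariant.

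The character $\tilde\alpha$ lies in $B_0(N)$ by Lemma \ref{lem:direct products} (since $\alpha\in B_0(S)$ and the principal block is $\Aut$-stable), has degree $\alpha(1)^t$ coprime to $p$, is $\sigma$-fixed because $\sigma$ commutes with the $\Aut(N)$-action on $\Irr(N)$, and is nontrivial as $\alpha\neq 1_S$. Since $\gcd(\tilde\alpha(1),[G:N])=1$ with $[G:N]$ a $p$-power, \cite[Cors. 6.2 and 6.4]{Nav18} provide a $\sigma$-invariant extension $\chi\in\Irr(G)$ of $\tilde\alpha$. Then $\chi(1)=\tilde\alpha(1)$ is coprime to $p$; by Lemma \ref{lem:princblockabove}(iv), $B_0(G)$ is the only block of $G$ covering $B_0(N)$ as $G/N$ is a $p$-group, so $\chi\in B_0(G)$; and $\ker(\chi)\cap N=\ker(\tilde\alpha)\neq N$ shows $N\not\subseteq\ker(\chi)$, completing the proof.

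The principal technical step is the Sylow-conjugation argument modulo $\Inn(N)$ that produces the $G$-invariant $\tilde\alpha$; once this character is in hand, the extension and block-theoretic steps are routine applications of results already cited in the paper.
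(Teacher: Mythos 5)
Your proof is correct and the overall strategy coincides with the paper's: construct a $G$-invariant $\tilde\alpha\in\Irr_{p',\sigma}(B_0(N))$ of the form $\alpha_1\times\cdots\times\alpha_t$ with each $\alpha_i$ an $\Aut(S)$-conjugate of $\alpha$, then extend to $G$ and land in $B_0(G)$ by Lemma~\ref{lem:princblockabove}(iv). Where you differ is in how the $G$-invariant character is produced: you run a Sylow conjugation in $\Out(N)$, conjugating $\overline G/\Inn(N)$ into $H/\Inn(N)=\Stab_{\Aut(N)}(\alpha^{\otimes t})/\Inn(N)$. The paper instead observes that the $\Aut(N)$-orbit $\mathcal{Y}$ of $\alpha^{\otimes t}$ has cardinality $|\mathcal{X}|^t$ coprime to $p$ (as $X\leq\Stab_{\Aut(S)}(\alpha)$), that $N$ acts trivially on $\mathcal{Y}$, and that the $p$-group $G/N$ acting on a set of $p'$-size has a fixed point. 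The orbit-counting argument is shorter and avoids lifting Sylow conjugacy from $\Out(N)$ back to $\Aut(N)$, but your version is perfectly sound and arrives at the same $\tilde\alpha\in\mathcal{Y}$. One small point: when invoking \cite[Cors.~6.2 and 6.4]{Nav18} you cite only the coprimality of $\tilde\alpha(1)$ with $|G/N|$; the paper additionally records that $o(\tilde\alpha)=1$ because $N$ is perfect, which is the other hypothesis those extension results require. Since $N$ is semisimple this is automatic, so this is a gap in exposition rather than in substance. Your explicit remark that $\alpha\neq 1_S$ must be assumed is also correct and matches the paper's intended use (see Theorem~\ref{thm:simple groups}(i), where $\theta\neq 1_S$).
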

\begin{proof}
Let $\mathcal{X}$ be the $\Aut(S)$-orbit of $\alpha$ and $\mathcal{Y}=\{\alpha_1\times\cdots\times\alpha_t\mid \alpha_i\in\mathcal{X}\}\sbs\Irrg{B_0(N)}$, so that $|\mathcal{Y}|=|\mathcal{X}|^t$ is not divisible by $p$. Since $N$ stabilizes every element of $\mathcal{Y}$ and $G/N$ is a $p$-group, there is some $G$-invariant $\lambda\in\mathcal{Y}$. Since $N$ is perfect, the determinantal order $o(\lambda)$ is $1$. By \cite[Corollaries 6.2 and 6.4]{Nav18} there is an extension $\hat\lambda\in\Irrg{G}$ of $\lambda$. Since $G/N$ is a $p$-group, $\hat\lambda\in\Irrg{B_0(G)}$ by Lemma \ref{lem:princblockabove}(iv).
\end{proof}

\section{Simple groups}\label{sec:simples}

The aim of this section is to prove parts (i) and (ii) from Theorem \ref{thm:simple groups} in the Introduction. Throughout, let $\mathcal{G}$ denote the absolute Galois group ${\rm Gal}(\QQ^{\rm ab}/\QQ)$ and given a prime $p$, let $\sigma\in\mathcal{G}$  be the Galois automorphism defined in the Introduction. 
 We begin with the following observation about sporadic and alternating groups.

\begin{lem}\label{lem:altspor}
Let $p$ be an odd prime, and let $S$ be an alternating group,  sporadic simple group, or the Tits group $\tw{2}\type{F}_4(2)'$. Assume $(p, S)\neq (3, J_3)$. Then every $\chi\in\irr{S}$ is almost $p$-rational.
\end{lem}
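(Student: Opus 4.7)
The approach rests on the characterization from the Introduction: since $p$ is odd, a character $\chi$ is almost $p$-rational if and only if $\QQ(\chi)\subseteq \QQ_{pm}$ for some positive integer $m$ coprime to $p$; equivalently, the $p$-part of the conductor of $\QQ(\chi)$ divides $p$. I will treat the alternating case and the sporadic/Tits case separately.

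For the alternating groups $\mathfrak{A}_n$, the plan is to invoke the classical description of their irreducible characters. Characters restricted from $\sym_n$ (those arising from non-self-conjugate partitions) are rational, so there is nothing to check. The remaining characters come in pairs $\chi^{\pm}$ associated with a self-conjugate partition $\lambda$, and by Frobenius' formula (see, for example, James--Kerber) one has $\QQ(\chi^{\pm})\subseteq \QQ(\sqrt{d})$, where $d = \pm h_1 h_3 \cdots h_{2k-1}$ is, up to sign, the product of the distinct odd principal (diagonal) hook lengths of $\lambda$. The key observation I will use is that the square-free part $d'$ of $d$ is a square-free integer, so for every odd prime $p$ the conductor of $\QQ(\sqrt{d'})$ divides $4|d'|$ and has $p$-part at most $p$. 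Hence $\QQ(\chi^{\pm})\subseteq \QQ_{pm}$ for some $m$ coprime to $p$, as required.

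For the sporadic simple groups and the Tits group, the plan is a finite case check, using the ATLAS (equivalently, GAP's character table library). A convenient reduction is the following: if the Sylow $p$-subgroups of $S$ have exponent $p$, then every $x \in S$ has order of the form $p^{a}q$ with $a \le 1$ and $(q,p)=1$; consequently $\chi(x) \in \QQ_{o(x)} \subseteq \QQ_{p|S|_{p'}}$, and every $\chi \in \Irr(S)$ is automatically almost $p$-rational. Inspecting element orders on the ATLAS list should reduce the verification to the few pairs $(p,S)$ for which $S$ contains an element of order $p^{2}$---notably $McL$ and $J_3$ at $p=3$, along with a handful of others. For each such remaining pair I would read off from the character table the values of each $\chi \in \Irr(S)$ on the classes of order $p^{2}$ and confirm that $\QQ(\chi) \subseteq \QQ_{pm}$ for some $m$ coprime to $p$. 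The single failure is $(3, J_3)$, where certain irreducible characters take values on the classes of order $9$ generating the cubic real subfield of $\QQ_9$, which is not contained in $\QQ_{3m}$ for any $m$ coprime to $3$. The hard part will be this last bookkeeping step for the short list of sporadic groups with elements of order $p^{2}$---a finite but tedious verification, best handled using ATLAS data or a computer algebra system.
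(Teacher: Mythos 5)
Your proof is correct, but it takes a genuinely different route from the paper's, at least on the alternating side. For $S=\mathfrak{A}_n$ you invoke the Frobenius formula giving $\QQ(\chi^{\pm})\subseteq\QQ(\sqrt{d})$ with $d$ built from the diagonal hook lengths, and then compute the $p$-part of the conductor of that quadratic field. The paper instead uses a pure coprimality argument: every $\chi\in\Irr(\mathfrak{A}_n)$ lies under a rational-valued $\mathfrak{S}_n$-character, whose restriction to $\mathfrak{A}_n$ has at most two constituents; since these constituents are permuted by $\sigma$ and $\sigma$ has $p$-power order with $p$ odd, the $\langle\sigma\rangle$-orbit has size dividing both $2$ and a power of $p$, hence is a singleton, so $\chi$ is $\sigma$-fixed (equivalently, almost $p$-rational for $p$ odd). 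The paper's argument avoids the explicit field description and the conductor bookkeeping entirely and gets by with a one-line parity observation; your argument is also valid but requires knowing the Frobenius formula and checking that a quadratic field always sits inside $\QQ_{pm}$ with $(m,p)=1$. For the sporadic and Tits groups the paper simply cites a direct check in GAP, whereas you supply a useful preliminary reduction (Sylow $p$-subgroups of exponent $p$ force almost $p$-rationality, since every element order then has $p$-part at most $p$, so all character values land in $\QQ_{p\,|S|_{p'}}$), cutting the explicit check down to the pairs $(p,S)$ admitting elements of order $p^2$. That reduction is a nice addition, though the end result is the same finite verification. One small caveat: the paper's alternating argument identifies $\Aut(\mathfrak{A}_n)$ with $\mathfrak{S}_n$, which is off for $n=6$; your hook-length argument sidesteps that issue since it works purely inside $\mathfrak{S}_n$ acting on $\mathfrak{A}_n$.
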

\begin{proof}
For the sporadic and Tits groups, we can see this directly from the Character Table Library in GAP \cite{GAP}. Now let $S=\mathfrak{A}_n$ be an alternating group and let $\chi$ be an irreducible character of $\aut{S}=\mathfrak{S}_n$, whose characters take only rational values. Then $\chi|_S$ contains at most two irreducible constituents, which must be acted on by $\sigma$. Since $\sigma$ has $p$-power order (when viewed as an element of $\mathrm{Gal}(\QQ_{|S|}/\QQ)$) and $p$ is odd, it follows that $\sigma$ acts trivially on these constituents, proving the claim.  
\end{proof}

Much of the work will be centered around groups of Lie type, so we begin by fixing some notation in that situation.

\subsection{Notation for groups of Lie type}
Before continuing, we set some notation for the remainder of this section, surrounding simple groups of Lie type.
 So for now, let $S$ be a simple group of Lie type. We will exclude the Tits group $\tw{2}\type{F}_4(2)'$ from this list, instead treating it separately with the sporadic groups. 
 
 We have $A:=\aut{S}=\wt{S}\rtimes D$, where $\wt{S}$ denotes the group of inner-diagonal automorphisms and $D$ is an appropriate group of graph-field automorphisms (see, e.g. \cite[Theorems 2.5.12 and 2.5.14]{GLS}).  Note that $\wt{S}/S$ is abelian, and that $D$ is abelian unless $S=\type{D}_4(q)$, in which case $D=\sym_3\times C$ for a cyclic group $C$. 

Let $\bG$ be a simple, simply connected linear algebraic group over $\bar\FF_{q_0}$ for some prime $q_0$, and let $F\colon \bG\rightarrow\bG$ be a Steinberg endomorphism such that $S=G/\zent{G}$ with $G=\bG^F$. Let $\mathbb{G}:=\bG_{\mathrm{ad}}$ be a simple algebraic group of adjoint type, of the same type as $\bG$.  Then $S\cong [\bbG^F, \bbG^F]$ and $\wt{S}\cong \bbG^F$ (see \cite[Prop. 24.21]{MT11} and discussion after). We will  make these identifications.

The following is an application of the work of Tiep--Zalesski \cite{TZ04} and will be useful in the case of defining characteristic.

\begin{lem}\label{lem:defcharfixed}
Let $S$ be as above, with $p=q_0\geq 3$.  Then every $\chi\in\irr{S}$ is $\sigma$-invariant.
\end{lem}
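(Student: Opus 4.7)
The plan is to combine the explicit description of $\sigma$ with the realization-field bounds established by Tiep and Zalesski \cite{TZ04}. A character $\chi\in\irr{S}$ is $\sigma$-invariant precisely when its values lie in some cyclotomic field $\QQ_{pm}$ with $(p,m)=1$, so the task reduces to verifying this containment.

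First I would pass to the finite reductive group $G=\bG^F$. Since $\bG$ is simply connected and we are in defining characteristic $p\geq 3$, the center $\zent{G}$ has order coprime to $p$, and every irreducible character of $S=G/\zent{G}$ inflates to an irreducible character of $G$. It is therefore enough to prove that every $\psi\in\irr{G}$ has values in $\QQ(\zeta_m,\zeta_p)$ for some integer $m$ coprime to $p$. This is essentially the content of \cite{TZ04}: using the Jordan decomposition $g=su$ together with the structure of Green functions and Deligne-Lusztig characters, one sees that the semisimple part $s$ contributes only roots of unity of $p'$-order (since $s$ is a $p$-regular element), while the unipotent part $u$ contributes at most a single factor of $\zeta_p$.

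To conclude, I would check that $\sigma$ acts trivially on $\QQ(\zeta_m,\zeta_p)$. By definition $\sigma$ fixes $p'$-roots of unity, and $\sigma(\zeta_p)=\zeta_p^{1+p}=\zeta_p$ because $\zeta_p^p=1$. Hence $\sigma$ fixes every value of $\psi$, and therefore of $\chi$, yielding the lemma.

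The main obstacle is extracting from \cite{TZ04} the statement that the $p$-part of the realization field of each $\psi\in\irr{G}$ is at most $\QQ(\zeta_p)$ rather than some higher cyclotomic extension $\QQ(\zeta_{p^k})$ with $k\geq 2$. This is precisely the delicate point handled by Tiep-Zalesski through a careful analysis of unipotent characters and the values of Green functions, and it is also the step that requires the hypothesis $p\geq 3$.
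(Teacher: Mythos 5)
Your proposal is correct and takes essentially the same route as the paper, which simply writes ``This follows from [TZ04, Thm.~1.3 and Prop.~10.12].'' Your elaboration—passing to $G=\bG^F$ via $|\zent{G}|$ being prime to $p$, observing that $\sigma$ fixes $\QQ(\zeta_p,\zeta_m)$ pointwise when $(p,m)=1$, and identifying the realization-field bound (at most one factor of $\zeta_p$) as the substantive input from Tiep--Zalesski, requiring $p\geq 3$—accurately unpacks why that citation suffices.
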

\begin{proof}
This follows from \cite[Thm. 1.3 and Prop. 10.12]{TZ04}.
\end{proof}

Now, let $(\bG^\ast, F)$ and $(\bbG^\ast, F)$ be dual to $(\bG, F)$ and $(\bbG, F)$, respectively. For each conjugacy class $(s)$ of semisimple elements (that is, $q_0'$-elements) in $(\bbG^\ast)^F$, there is a unique \emph{semisimple} character $\chi_s\in\irr{\wt{S}}$, whose degree is $|(\bbG^\ast)^F:\cent{\bbG^\ast}{s}^F|_{q_0'}$ (see \cite[Thm. 2.6.11]{GM20}.  These characters will play an important role in our proof.

The unipotent characters will also play an important role. Thanks to the work of Lusztig \cite{lus88}, the unipotent characters of $G$ are trivial on the center, so can be viewed as characters of $S$; furthermore, these characters extend to unipotent characters of $\wt{S}$. That is, the unipotent characters of $S$  can simultaneously be viewed as the deflation of unipotent characters of $G$ or as the irreducible restriction of unipotent characters of $\wt{S}$. In particular, the principal series unipotent characters of $G$ are those that are constituents of the induced character  $\Ind_B^G(1_B)$, where $B=\bg{B}^F$ is the set of fixed points of an $F$-stable Borel subgroup $\bg{B}$ of $\bG$.
By a slight abuse of terminology, if $\chi\in\irr{S}$ is the deflation of a principal series unipotent character of $G$, we will refer to $\chi$ itself as a principal series unipotent character.

\subsection{Part (i) of Theorem \ref{thm:simple groups}}

Although not completely necessary for our purposes here, we prove a stronger version of part (i) of Theorem \ref{thm:simple groups} in the case of groups of Lie type when $p\neq q_0$. We hope that this can be useful in future work. This statement will utilize the principal series unipotent characters.

\begin{lem}
\label{lem:extfield}
Let $S=G/\zent{G}$ be a simple group of Lie type, where $G=\bG^F$ with $\bG$  of simply connected type and $G\neq \tw{2}\type{F}_4(q^2)$.  Let $\chi\in\Irr(S)$ be  a principal series unipotent character. If $S\leq A\leq\aut{S}$ is such that $A$ is generated by inner-diagonal and field automorphisms, then there is an extension $\hat\chi$ of $\chi$ to $A$ that is $\mathcal{G}_\chi$-invariant.
\end{lem}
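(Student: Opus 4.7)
The plan is to extend $\chi$ in two stages—first to $\wt{S}$, then to $A$—via parabolic induction, and then to verify that the resulting extension is $\mathcal{G}_\chi$-invariant. Since $A/\wt{S}$ is a cyclic group of classes of field automorphisms under the hypothesis, I may reduce to the case $A = \wt{S}\langle F_0\rangle$, where $F_0$ generates the field-automorphism part.

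For the first stage, $\chi$ extends to a principal series unipotent character $\tilde\chi\in\Irr(\wt{S})$: by Lusztig's results, the unipotent characters of $G=\bG^F$ are trivial on $\zent{G}$ and deflate to characters of $S$, and each such character has a unique unipotent extension to $\wt{S}=\bbG^F$. This extension commutes with Galois conjugation, so $\mathcal{G}_\chi = \mathcal{G}_{\tilde\chi}$. For the second stage, I would choose an $F_0$-stable Borel $B=\bg{B}^F$ and form $\pi = \Ind_B^A(1_B)$. The restriction $\pi|_{\wt{S}} = \Ind_B^{\wt{S}}(1_B)$ is the principal series representation, whose constituents are the principal series unipotent characters $\{\tilde\chi_\rho\}_{\rho\in\Irr(W)}$, where $W$ is the Weyl group. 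The endomorphism algebra $\End_{\CC\wt{S}}(\pi|_{\wt{S}})\cong\mathcal{H}(W,q)$ is the Iwahori–Hecke algebra, and $F_0$ acts on $\mathcal{H}(W,q)$ through its (trivial) action on $W$. This yields a canonical extension $\hat\chi_\rho$ of each $\tilde\chi_\rho$ as a constituent of $\pi$.

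To verify $\mathcal{G}_\chi$-invariance, I would argue that the Hecke-algebra construction is Galois-equivariant. The algebra $\mathcal{H}(W,q)$ and its character theory are defined over $\QQ$, and every irreducible character of the Coxeter group $W$ is rational. Consequently, for each $\gamma\in\mathcal{G}$, the canonical extension of $\gamma\tilde\chi_\rho$ is $\gamma\hat\chi_\rho$. If $\gamma$ fixes $\chi$, it fixes $\tilde\chi$, so by uniqueness of the canonical construction it also fixes $\hat\chi$.

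The main obstacle will be making the Hecke-algebra extension genuinely canonical and provably Galois-equivariant. A priori, $\hat\chi$ is determined by $\tilde\chi$ only up to multiplication by a linear character of $A/\wt{S}$, so the content is that the specific extension arising from $\pi$ falls in the $\mathcal{G}_\chi$-fixed orbit. This could be handled either by appealing to Malle's existence and uniqueness theorems for canonical extensions of unipotent characters together with a check of Galois compatibility, or by explicitly computing the values $\hat\chi(F_0^i x)$ via the Howlett–Lehrer decomposition of $\pi$ and observing their polynomial nature in $q$. The exclusion of $\tw{2}\type{F}_4(q^2)$ (and, implicitly, the graph automorphism case) is needed precisely so that $F_0$ acts trivially on $W$, which is what underwrites the canonical bijection between constituents of $\pi$ and of $\pi|_{\wt{S}}$.
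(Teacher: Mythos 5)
Your plan is the right kind of argument and runs parallel to what the cited references actually prove, but it does not close the proof: you explicitly flag the Galois-equivariance of the Hecke-algebra extension as ``the main obstacle'' and leave it unresolved, and this is precisely the nontrivial content of the lemma. The paper handles exactly this point by citing [Malle08, Thms.~2.4 and 2.5] for the existence of an extension to the stabilizer, and then \cite[Prop.~2.6]{RSF22} together with its generalization \cite[Prop.~8.7]{birtethesis} for the statement that the Howlett--Lehrer parametrization of Harish-Chandra series characters is $\mathcal{G}$-equivariant (when $F$ is a Frobenius morphism), which is what your outline would need to reproduce. Your informal appeal to ``rationality of $\mathcal{H}(W,q)$'' is not by itself enough: the extension of $\tilde\chi$ to $A$ is determined only up to a linear character of $A/\wt{S}$, and one must show that the specific extension singled out by the Hecke-algebra construction is compatible with the Galois action, which requires tracking the Howlett--Lehrer cocycle and the parametrization through field automorphisms.

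There is a second gap: your argument silently assumes $F$ is a Frobenius morphism. When $G={}^2\type{B}_2(q^2)$ or ${}^2\type{G}_2(q^2)$, $F$ is only a Steinberg endomorphism, the Frobenius-based Harish-Chandra and Howlett--Lehrer machinery does not apply verbatim, and the references above do not cover it. The paper handles this separately by observing that in these cases the only principal series unipotent characters are the trivial character and the Steinberg character, both of which are rational and admit rational extensions to $\aut{S}$ by \cite{schmid}. Your sketch needs an analogous case split. (As a minor point, your reduction to $A=\wt{S}\langle F_0\rangle$ is fine as a convenience but should be justified by extending to the full group and then restricting; also, $F_0$ ``acting trivially on $W$'' should be phrased for the relative Weyl group $W^F$ in the twisted types.)
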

\begin{proof}
First, by \cite[Thms. 2.4 and 2.5]{Malle08}, every unipotent character extends to its stabilizer in $\aut{S}$, which includes the inner-diagonal and field automorphisms. If $F$ is a Frobenius morphism, then the statement follows directly from \cite[Prop. 2.6]{RSF22} and its generalization \cite[Prop. 8.7]{birtethesis}.  Otherwise, $G=\tw{2}\type{G}_2(q^2)$ or $\tw{2}\type{B}_2(q^2)$, and $\chi$ is either the trivial character or Steinberg character, so the result follows from \cite{schmid} in this case.
\end{proof}

\begin{lem}\label{lem:existsprincseriesunip}
Let $p$ be a prime and let $S=G/\zent{G}$ be a simple group of Lie type, where $G=\bG^F$ with $\bG$ of simply connected type defined in characteristic distinct from $p$. Then $\irrp{B_0(S)}$ contains a nontrivial rational-valued principal series unipotent character $\chi$ such that if  $S\leq A\leq\aut{S}$ is an almost simple group generated by inner-diagonal and field automorphisms, then there is a rational-valued extension of $\chi$ to $A$.
\end{lem}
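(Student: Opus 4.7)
The plan is to produce $\chi$ as a principal series unipotent character via Harish-Chandra theory, which parametrizes these by $\Irr(W)$, where $W$ is the Weyl group of $G$. I would then verify rationality, $p'$-degree, principal-block membership, and rational extendibility in turn.

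By Harish-Chandra theory, the principal series unipotent characters of $G$ are in bijection with $\Irr(W)$, and each is trivial on $\zent{G}$, so descends to a character of $S$. A classical result of Springer states that every complex character of a finite crystallographic Coxeter group is rational-valued; via the Galois-equivariance of the Harish-Chandra parametrization, every principal series unipotent character of $G$ is therefore rational-valued as well. In particular, $\mathcal{G}_\chi = \mathcal{G}$ for any such $\chi$, so the $\mathcal{G}_\chi$-invariant extension of $\chi$ to $A$ provided by Lemma \ref{lem:extfield} is automatically rational-valued. This settles both rationality assertions.

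It remains to exhibit a nontrivial $\phi\in\Irr(W)$ such that the corresponding principal series unipotent character $\chi_\phi$ lies in $\Irr_{p'}(B_0(S))$. The generic degree of $\chi_\phi$ is a polynomial in $q$ with known factorization into cyclotomic polynomials $\Phi_d(q)$, and the $p'$-degree condition is ensured by choosing $\phi$ so that the cyclotomic factor $\Phi_{d_p(q)}(q)$, where $d_p(q)$ is the multiplicative order of $q$ modulo $p$, does not appear in $\chi_\phi(1)$. For the principal-block condition, I would appeal to the theory of $d$-Harish-Chandra series due to Brou\'e--Malle--Michel together with the block-theoretic theorems of Cabanes--Enguehard which, under mild hypotheses on $p$, identify the unipotent characters lying in $B_0(G)$ with those in a principal $d$-Harish-Chandra series. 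Selecting $\phi$ as a suitable nontrivial constituent of the permutation character of $W$ on cosets of the relative Weyl group attached to a principal $d$-split Levi subgroup then produces the desired $\chi_\phi$.

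The main obstacle will be the case analysis needed to handle exceptional situations uniformly: bad primes for $G$, small-rank groups where the principal $d$-split Levi structure may fail to produce any nontrivial choice, and the Suzuki, Ree, and triality groups where Harish-Chandra theory takes modified forms. For these I would use a direct verification based on Lusztig's classification of unipotent characters and the known block distributions (from Fong--Srinivasan in classical type and Enguehard's tables in exceptional type), reading off a suitable $\chi_\phi$ from the explicit parametrizations. I expect this case-by-case verification to be by far the most technical portion of the argument.
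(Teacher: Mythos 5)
The central claim that every principal series unipotent character is rational-valued is incorrect, and this is the main gap. You argue that Springer's rationality of Weyl group characters transfers via ``Galois-equivariance of the Harish-Chandra parametrization'' to give $\mathcal{G}_\chi = \mathcal{G}$ for every principal series unipotent $\chi$. But the specialization from the Iwahori--Hecke algebra to $\End_G(\Ind_B^G 1_B)$ does not preserve rationality in this blanket way: by the Benson--Curtis theorem (and Curtis's correction), the principal series unipotent characters are rational-valued \emph{except} for $\phi_{512,11}, \phi_{512,12}$ of $\type{E}_7(q)$ and four characters $\phi_{4096,\cdot}$ of $\type{E}_8(q)$, which take irrational values. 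So one cannot conclude $\mathcal{G}_\chi = \mathcal{G}$ for free; the character must be chosen to avoid these exceptions, and one must then check that a rational principal-series unipotent character in $\Irr_{p'}(B_0(S))$ can still be found in types $\type{E}_7$ and $\type{E}_8$. The paper does exactly this: it cites Benson--Curtis to isolate the exceptions and then uses the $d$-Harish-Chandra decompositions of Brou\'e--Malle--Michel together with Enguehard's block results to locate a suitable rational one.

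Two smaller remarks. First, the paper opens by dispatching $p\in\{2,3\}$ and a short list of small-rank and Suzuki/Ree families directly via the Steinberg character, after which $F$ may be assumed to be a genuine Frobenius; you flag that the twisted cases ``take modified forms,'' but since Lemma~\ref{lem:extfield} excludes $\tw{2}\type{F}_4(q^2)$ and is itself proved in the non-Frobenius cases by reduction to $1$ or $\mathrm{St}$, any complete argument is forced into a similar reduction, which should be made explicit rather than deferred. Second, for the block and $p'$-degree step, rather than redeveloping generic degrees, $d$-split Levis, and relative Weyl groups, the paper leans on explicit unipotent members of $\Irr_{p'}(B_0(S))$ already produced in earlier work and merely checks, using Carter's combinatorial criterion, that at least one of them lies in the principal series. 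Your route is viable in principle but substantially heavier, and as written it is incomplete until the rationality exceptions are taken into account.
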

\begin{proof}
If $p\in\{2,3\}$, then the Steinberg character $\mathrm{St}_S$ lies in $\irrp{B_0(S)}$. (Indeed, this can be seen using the arguments of \cite[Lem.~3.6]{RSV21} if $S$ is not a Ree group, or from \cite{Malle90, Ward} if $S$ is a Ree group.) Similarly, if $p\geq 5$ and $S$ is $\Sp_4(q)$ with $q$ even, $\PSL_2(q)$, $\PSL_3^\epsilon(q)$, $\type{D}_4(q)$, $\tw{3}\type{D}_4(q)$, or a Suzuki or Ree group, then, as  in the proof of \cite[Props. 4.4 and 4.5]{GRSS}, the Steinberg character $\mathrm{St}_S$ lies in $\irrp{B_0(S)}$.  Further, $\mathrm{St}_S$ is rational-valued and extends to a rational-valued character of $\aut{S}$, by \cite{schmid}.

From here, we may assume that $p\geq 5$ and $F$ is a Frobenius endomorphism, so that the extension property will follow from Lemma \ref{lem:extfield} once we have shown that $\irrp{B_0(S)}$ contains a rational-valued principal series unipotent character.

We next suppose that $S$ is of exceptional type $\type{G}_2(q)$, $\type{F}_4(q)$, $\type{E}_6(q)$, $\tw{2}\type{E}_6(q)$, $\type{E}_7(q)$, or $\type{E}_8(q)$. Here the principal series unipotent characters are rational except for (in the notation of \cite[Sec. 13]{carter}) the characters $\phi_{512, 11}, \phi_{512, 12}$ of $S=\type{E}_7(q)$ and $\phi_{4096,11}, \phi_{4096, 12}, \phi_{4096, }, \phi_{4096, }$ of $\type{E}_8(q)$, by \cite[Thm. 2.9]{bensoncurtis}
 and \cite{BensonCurtisCorrection}. Let $d$ be the order of $q$ modulo $p$. If $d$ is a regular number, then $B_0(S)$ again contains $\mathrm{St}_S$ (see, e.g. \cite[Lem. 3.6]{RSV21}, which follows directly from \cite[Thm. 6.6]{Malle07} and \cite[Theorem A]{Eng00}). Otherwise, we see using the decompositions of $d$-Harish-Chandra series in \cite[Table 2]{BMM} and the fact that all characters of such a series lie in the same block by \cite[Theorem A]{Eng00} to see that there is a rational-valued principal series unipotent character in $\irrp{B_0(S)}$.  

Finally, assume $G$ is one of the remaining cases of classical type. In these cases, every unipotent character is rational-valued (see \cite[Cor. 4.5.6]{GM20}). Here two unipotent members of $\irrp{B_0(S)}$ are illustrated in the proof of  \cite[Prop. 4.4]{GRSS}, given explicitly in Tables 1-4 of loc. cit. It suffices to know that at least one of these characters lies in the principal series. Here, \cite[Sec. 13.8]{carter} describes how to determine, from the combinatorial description  of the unipotent character, whether it lies in the principal series. From this, we see that, indeed, at least one of the two characters described in \cite[Tables 1-4]{GRSS} lies in the principal series in each case. 
\end{proof}

With this, we are prepared to prove part (i) of Theorem \ref{thm:simple groups}. 

\begin{proof}[Proof of Theorem \ref{thm:simple groups}(i)]
If $S$ is a sporadic, Tits, or alternating group, then every character is almost $p$-rational by Lemma \ref{lem:altspor}. Then since  $p\nmid |\Out(S)|$, it suffices in these cases to note that $B_0(S)$ contains a nontrivial $p'$-degree character.

Next, suppose that $S$ is a simple group of Lie type defined in characteristic $p$. We may find a simple, simply connected algebraic group $\bG$ defined over $\overline{\FF}_p$ such that $S=G/\zent{G}$, where $G=\bG^F$ for some Steinberg endomorphism $F$. Here again we have every character is almost $p$-rational by Lemma \ref{lem:defcharfixed}. In \cite[Props. 4.2 and 4.3]{GRSS} and building off of \cite[Prop. 4.3]{GRS}, a nontrivial member of $\irrp{B_0(S)}$ is exhibited that extends to $X$. 

Finally, assume that $S$ is a simple group of Lie type defined in characteristic distinct from $p$. Then since $p\geq 5$ and the graph automorphisms of $S$ have order at most $3$, we have $X$ such that $X/S\in\Syl_p(A/S)$ can be chosen to be generated by inner-diagonal and field automorphisms, so the statement follows from Lemma \ref{lem:existsprincseriesunip}.
\end{proof}

\subsection{Part (ii) of Theorem \ref{thm:simple groups}}
We  now turn our attention toward part (ii) of Theorem \ref{thm:simple groups}. In many cases, we will be able to use the following simplified condition:

\begin{cor}\label{cor:Biisimplified}\
Let $S\leq T$ be as in Theorem \ref{thm:simple groups}(ii). Then Theorem \ref{thm:simple groups}(ii) holds for $T$ if there exist at least $2\sqrt{p-1}+1$ characters in $\Irrg{B_0(S)}$ that are not $T$-conjugate.
\end{cor}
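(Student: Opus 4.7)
My plan is to reduce both parts (a) and (b) of Theorem \ref{thm:simple groups}(ii) to the hypothesis via the standard Clifford-theoretic lift of $\sigma$-fixed, $p'$-degree, principal-block characters across $p'$-extensions, which has already been packaged as Corollary \ref{cor:p'abelianquotsigmafixed}. No deep input about the structure of $S$ will be needed; everything follows from general block-theoretic lemmas of Section \ref{sec:princblockfolklore}.

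For part (b), the conclusion is essentially immediate: any family of pairwise non-$T$-conjugate elements of $\Irrg{B_0(S)}$ represents that many distinct $T$-orbits on $\Irrg{B_0(S)}$, so the hypothesis already yields at least $2\sqrt{p-1}+1$ orbits. To close the gap with the asserted bound $2(p-1)^{1/4}$ I would verify the elementary inequality $2\sqrt{p-1}+1\geq 2(p-1)^{1/4}$. Setting $y:=(p-1)^{1/4}\geq 0$, this rearranges to $2y^2-2y+1\geq 0$, which holds universally because the quadratic has negative discriminant.

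For part (a), recall that $S\normal T$ (since $S$ is identified with $\Inn(S)\normal \Aut(S)$) and $p\nmid |T:S|$ by hypothesis. Let $\theta_1,\ldots,\theta_m\in\Irrg{B_0(S)}$ be pairwise non-$T$-conjugate with $m\geq 2\sqrt{p-1}+1$; each $\theta_i$ is $\sigma$-fixed, of $p'$-degree, and lies in $B_0(S)$. Since $p\geq 5$ the Galois automorphism $\sigma$ has $p$-power order on $\QQ_{|T|}$, so Corollary \ref{cor:p'abelianquotsigmafixed}(ii) (applied with $N=S$, $G=T$, $\tau=\sigma$) produces for each $i$ a constituent $\chi_i\in\Irr(B_0(T))$ of $\theta_i^T$ that is $\sigma$-fixed, and the final clause of that corollary gives $p\nmid\chi_i(1)$. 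Thus $\chi_i\in\Irrg{B_0(T)}$.

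Finally, I would argue that the $\chi_i$ are pairwise distinct. By Frobenius reciprocity, $\theta_i$ is an irreducible constituent of $(\chi_i)|_S$, and by Clifford's theorem all constituents of $(\chi_i)|_S$ lie in a single $T$-orbit. Hence the $T$-orbit of $\theta_i$ is determined by $\chi_i$, so distinct orbits of $\theta_i$'s give distinct $\chi_i$'s. We therefore obtain at least $m\geq 2\sqrt{p-1}+1>\lceil 2\sqrt{p-1}\rceil$ distinct elements of $\Irrg{B_0(T)}$, proving (a). The proof encounters no real obstacle: the transport of Galois invariance, $p'$-degree, and principal-block membership across $p'$-extensions is entirely handled by Corollary \ref{cor:p'abelianquotsigmafixed}, and the only numerical subtlety is the strict inequality $\lceil 2\sqrt{p-1}\rceil<2\sqrt{p-1}+1$, which holds since the ceiling of any real number is strictly less than that number plus $1$.
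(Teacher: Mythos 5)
Your proof is correct and takes essentially the same route as the paper: the paper's one-line proof also invokes Corollary \ref{cor:p'abelianquotsigmafixed} to lift each $\sigma$-fixed, $p'$-degree member of $\Irr(B_0(S))$ to a $\sigma$-fixed, $p'$-degree member of $\Irr(B_0(T))$, and appeals to Clifford theory to see that non-$T$-conjugate characters of $S$ cannot lie under the same character of $T$. You simply make explicit the elementary numerical checks ($2\sqrt{p-1}+1\geq 2(p-1)^{1/4}$ and $\lceil 2\sqrt{p-1}\rceil < 2\sqrt{p-1}+1$) that the paper leaves implicit.
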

\begin{proof}
This follows from Corollary \ref{cor:p'abelianquotsigmafixed}, and Clifford theory, as non-$T$-conjugate members of $\irr{S}$ cannot lie under the same character in $\irr{T}$.
\end{proof}

Lemma \ref{lem:defcharfixed} will continue to be useful in defining characteristic. In the case of non-defining characteristic, we will use unipotent characters and certain semisimple characters to obtain the required bounds.

\begin{lem}\label{lem:semisimplecond}
Let $p$ be a prime and assume that $p\neq q_0$. Let $\wt\chi\in\irr{\wt{S}}$ be semisimple corresponding to a semisimple class containing $s\in\bbG^\ast$ such  $|s|=p$. Then $\wt{\chi}\in\Irr_{\sigma}(B_0(\wt{S}))$.

Further, if $p\geq 5$ and either $S\neq \PSL_n^\epsilon(q)$ or  $p\nmid (q-\epsilon)$, then $\wt\chi$ restricts irreducibly to $S$ and is not $D$-conjugate to $\wt\chi\beta$ for any nontrivial $\beta\in\irr{\wt{S}/S}$. 
\end{lem}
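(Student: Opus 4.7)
The plan is to verify the two assertions in turn.

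\emph{First assertion.} We need to show $\chi_s \in B_0(\wt S)$ and that $\chi_s$ is $\sigma$-invariant. Since $|s|=p$, $s$ is a semisimple $p$-element of $\bbG^{\ast F}$, and the standard block theory of finite reductive groups (Brou\'e--Michel, Cabanes--Enguehard, Bonnaf\'e--Rouquier) guarantees that the Lusztig series $\mathcal{E}(\wt S, (s))$ containing $\chi_s$ lies inside $B_0(\wt S)$: under the Jordan decomposition, $\chi_s$ is paired with the trivial unipotent character of $\cent{\bbG^\ast}{s}^F$, which lies in the principal block of the centralizer, and this transfers to $B_0(\wt S)$ via the block-theoretic Jordan correspondence. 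For the $\sigma$-invariance, we use the Galois-equivariance of the Lusztig parameterization (as developed by Srinivasan--Vinroot and Navarro--Tiep): for every integer $m$ coprime to $|\wt S|$, the automorphism $\sigma_m\colon \zeta \mapsto \zeta^m$ acts on semisimple characters via $\chi_s^{\sigma_m} = \chi_{s^m}$. Choosing $m \equiv 1+p \pmod{|\wt S|_p}$ and $m \equiv 1 \pmod{|\wt S|_{p'}}$ by the Chinese Remainder Theorem, $\sigma$ agrees with $\sigma_m$ on $\QQ_{|\wt S|}$; and $s^m = s^{1+p} = s$ because $|s|=p$, giving $\chi_s^\sigma = \chi_s$.

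\emph{Second assertion.} Under Lusztig's identification $\irr{\wt S/S} \leftrightarrow Z(\bbG^\ast)^F$, multiplication by $\beta \in \irr{\wt S/S}$ on the semisimple character $\chi_s$ translates to $\chi_s\beta = \chi_{sz}$, where $z \in Z(\bbG^\ast)^F$ is dual to $\beta$. Clifford theory then shows that $\chi_s|_S$ is irreducible if and only if $s \not\sim_{\bbG^{\ast F}} sz$ for every nontrivial $z \in Z(\bbG^\ast)^F$. The hypotheses imply $p \nmid |Z(\bbG^\ast)^F|$: indeed, $\bbG^\ast = \bG_{\mathrm{sc}}$, and for $\bG$ not of type $\type{A}$ the center $Z(\bG_{\mathrm{sc}})$ has order at most $4$, which is automatically coprime to $p \geq 5$; in type $\type{A}^\epsilon_{n-1}$, $|Z(\bG_{\mathrm{sc}})^F| = \gcd(n, q-\epsilon)$, coprime to $p$ under the assumption $p \nmid q-\epsilon$. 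Consequently every nontrivial $z$ has $p'$-order, and since $s$ and $z$ commute with coprime orders, $|sz| = p\,|z| > p = |s|$, so $s$ and $sz$ cannot be $\bbG^{\ast F}$-conjugate. This proves the irreducible restriction. For the $D$-non-conjugacy, the action of $D$ on $\irr{\wt S}$ corresponds (under Jordan decomposition) to an order-preserving action on semisimple classes of $\bbG^{\ast F}$; hence every $D$-conjugate of $\chi_s$ is of the form $\chi_{s'}$ with $|s'|=p$, and the same order comparison forces $\chi_s^d \neq \chi_s\beta$ for each nontrivial $\beta$.

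\emph{Main obstacle.} The technical crux is the first step---identifying precisely which block of $\wt S$ contains $\chi_s$ as the principal block---which relies on the full strength of the Brou\'e--Michel/Cabanes--Enguehard/Bonnaf\'e--Rouquier classification of $p$-blocks in finite reductive groups. The remaining steps reduce to elementary order and Clifford-theoretic considerations.
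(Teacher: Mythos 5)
Your proof is correct and takes the same route as the paper's, which cites \cite[Lem.~3.4]{SFT18} for the $\sigma$-invariance, \cite[Cor.~3.4]{hiss90} for block membership, and \cite[Prop.~2.5.20]{GM20} together with \cite[Prop.~7.2]{tay18} for the second assertion. One small imprecision: the opening claim that the entire Lusztig series $\mathcal{E}(\wt{S},(s))$ lies in $B_0(\wt{S})$ is not correct --- Brou\'e--Michel only gives that $\mathcal{E}_p(\wt{S},1)$ is a union of (possibly several) unipotent blocks, so a fixed series need not sit inside a single one; your follow-up via the Jordan decomposition (pairing $\chi_s$ with the trivial unipotent character of $\cent{\bbG^\ast}{s}^F$) is the right reasoning and is exactly what Hiss's result supplies, so no gap results. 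The rest matches the paper: the CRT choice of $m$ with $s^m=s$ is precisely how \cite{SFT18} proves $\sigma$-invariance, and the order comparison ($p\nmid|z|$ for $1\neq z\in\zent{(\bbG^\ast)^F}$ gives $|sz|\neq|s|$, while the dual $D$-action preserves orders) is the paper's argument; the paper just runs the restriction and $D$-non-conjugacy steps together by allowing $\varphi=1$, whereas you separate them.
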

\begin{proof}
Since $|s|=p$, we have that the semisimple character $\chi_s\in\irr{\wt{S}}$ corresponding to $s$ is fixed under $\sigma$, by e.g. \cite[Lem. 3.4]{SFT18}. Further, $\chi_s$ lies in $B_0(\wt{S})$ by \cite[Cor. 3.4]{hiss90}, so $\chi_s\in\Irr_\sigma(B_0(\wt{S}))$.

Now, assume $p\geq 5$ and $S\neq \PSL_n^\epsilon(q)$ or  $p\nmid (q-\epsilon)$ and suppose that $\chi_s^\varphi=\chi_s\beta$ for some $\varphi\in D$ (with the possibility $\varphi=1$) and $1\neq \beta\in\irr{\wt{S}/S}$. Then $\chi_s^\varphi=\chi_{sz}$ for some $1\neq z\in \zent{(\bbG^\ast)^F}$ using \cite[Prop. 2.5.20]{GM20}. Our conditions on $p$ ensure that $p\nmid |z|$ for any such $z$, and hence $|s|\neq |sz|$. But we have $s^{\varphi^\ast}$ is conjugate to $sz$ for some automorphism $\varphi^\ast$ dual to $\varphi$ by \cite[Prop. 7.2]{tay18}, contradicting that $|s|\neq |sz|$. We therefore obtain that no such $\varphi, \beta$ exist, so $\chi_s$ is not $D$-conjugate to $\chi_s\beta$ with $1\neq \beta$. Further, $\chi_s$ is irreducible on restriction to $S$, since the number of constituents in the restriction is the number of $\beta$ such that $\chi_s=\chi_s\beta$ using Clifford theory, since $\wt{S}/S$ is abelian and is multiplicity-free by \cite[Thm. 1.7.15]{GM20}. 
\end{proof}

We will also use the fact that unipotent characters are almost $p$-rational:
\begin{lem}\label{lem:unipcond}
Let $p\geq 3$ be a prime distinct from $q_0$. Then any unipotent character of $S$ or $\wt{S}$ is invariant under $\sigma$.
\end{lem}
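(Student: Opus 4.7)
The plan is to show that every value of a unipotent character lies in a cyclotomic field $\mathbb{Q}_m$ whose conductor $m$ is coprime to $p$. Once this is established, $\sigma$-invariance is automatic, since by definition $\sigma$ fixes every $p'$-root of unity.

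First I would reduce to the case of $\wt{S}=\bbG^F$. As recalled in the discussion preceding Lemma \ref{lem:semisimplecond}, the unipotent characters of $S$ are precisely the irreducible restrictions to $S$ of the unipotent characters of $\wt{S}$, which in turn coincide with the deflations along $G\twoheadrightarrow S$ of the unipotent characters of $G=\bG^F$. Since these operations do not enlarge the field of values, a unipotent character of $S$ is $\sigma$-fixed whenever the corresponding character of $\wt{S}$ is. Hence it suffices to treat the case of $\wt{S}$.

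The main step is then to invoke the classification of character fields of unipotent characters, due to Lusztig and Geck (and which underlies the appeals to \cite[Cor. 4.5.6]{GM20} in the proof of Lemma \ref{lem:existsprincseriesunip}). This says that for any unipotent character $\chi$ of $\wt{S}$, one has $\mathbb{Q}(\chi)=\mathbb{Q}$ except in a short explicit list of cases: two pairs of unipotent characters of $\type{E}_7(q)$ and $\type{E}_8(q)$, whose values lie in $\mathbb{Q}(\sqrt{q})$, and certain unipotent characters of the Suzuki and Ree groups, whose values lie in quadratic extensions involving $\sqrt{\pm 2}$ or $\sqrt{\pm 3}$ (times a power of $q$). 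In every case, $\mathbb{Q}(\chi)$ embeds in $\mathbb{Q}_m$ for some $m$ whose prime divisors lie in $\{2,q_0\}$; one may take $m\mid 8q_0$.

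Finally, because $p\geq 3$ and $p\neq q_0$, such an $m$ is coprime to $p$, so $\sigma$ acts trivially on $\mathbb{Q}_m\supseteq\mathbb{Q}(\chi)$. Thus $\chi^\sigma=\chi$, as required. The main obstacle in this plan is bookkeeping the precise form of $\mathbb{Q}(\chi)$ for the exceptional Suzuki, Ree, $\type{E}_7(q)$ and $\type{E}_8(q)$ unipotent characters and verifying that these irrationalities always live inside $\mathbb{Q}_{8q_0}$; once that case analysis is accepted, the remainder of the argument is immediate from the definition of $\sigma$.
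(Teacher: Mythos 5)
The overall strategy is sound, but your list of exceptional cases is incomplete in a way that actually breaks the argument. Besides the $\sqrt{q}$-valued principal series characters of $\type{E}_7(q)$, $\type{E}_8(q)$ and the Suzuki/Ree irrationalities, the cuspidal and other non--principal-series unipotent characters of the untwisted exceptional groups $\type{G}_2(q)$, $\type{F}_4(q)$, $\type{E}_6(q)$, $\tw{2}\type{E}_6(q)$, $\type{E}_7(q)$, $\type{E}_8(q)$ have character fields involving roots of unity $\zeta_3$, $\zeta_4=i$, $\zeta_5$ (for instance $E_6[\theta]$ with $\theta=\zeta_3$, or $E_8[\zeta_5^j]$); this is precisely what \cite[Table 1 and Prop.~5.6]{Geck03} classifies, and it is where the lemma's content really lies. (The result \cite[Cor. 4.5.6]{GM20} you mention only covers classical types, and Lemma \ref{lem:existsprincseriesunip} only needed principal series characters.)

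For these omitted characters your approach genuinely fails: you argue via ``the conductor $m$ of the character field is coprime to $p$,'' but if $p=3$, $q_0\neq 3$ and $\chi=E_6[\theta]\in\irr{\type{E}_6(q)}$, then $\QQ(\chi)=\QQ(\zeta_3)$ has conductor $3=p$, and similarly $\QQ(\zeta_5)$ has conductor $5$ for $p=5$ and $\type{E}_8(q)$. So ``$m\mid 8q_0$'' is simply not true and the automatic $\sigma$-invariance does not follow. The correct observation, and the one the paper uses, is finer than a conductor bound: $\sigma$ fixes $\zeta_r$ for \emph{every} prime $r$, including $r=p$, since $\sigma(\zeta_p)=\zeta_p^{1+p}=\zeta_p$; combined with $\sqrt{q}\in\QQ(\zeta_{q_0},i)$ (and $p\neq q_0$, $p$ odd) this covers all the irrationalities listed in Geck's table. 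It is essential here that only primitive roots of unity of \emph{prime} order occur --- if a genuine $\zeta_{p^2}$ appeared, $\sigma$ would move it. You should replace the conductor-coprimality claim by this observation and cite the actual classification in Geck03 rather than GM20.
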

\begin{proof}
For a positive integer $m$, let $\zeta_m$ denote a primitive $m$th root of unity. First, note that $\sqrt{q}$ lies in $\QQ(\zeta_{q_0}, i)$, which is fixed under $\sigma$ since $p\neq q_0$ and $p\geq 3$. Further, $\sigma$ fixes $\zeta_r$ for any prime $r$. Then from \cite[Table 1 and Prop. 5.6]{Geck03}, we see that the statement holds.
\end{proof}

\medskip

With this, we are ready to prove Theorem \ref{thm:simple groups}(ii).

\begin{proof}[Proof of Theorem \ref{thm:simple groups}(ii)]
If $S$ is sporadic, alternating $\mathfrak{A}_n$ with $n\leq 8$, Tits, or a group of Lie type with exceptional Schur multiplier (see \cite[Tab. 6.1.3]{GLS}), {this can be checked in GAP \cite{GAP}.}  
If $S=\mathfrak{A}_n$ with $n\geq 9$, then the result follows from \cite[Section 3]{Hun-Sch23} and the ``if" direction of Brauer's Height Zero conjecture \cite{KM13} if a Sylow $p$-subgroup of $S$ is abelian, and by \cite[Proposition 3.1(iii)]{Hun-Sch-Val23} otherwise, taking into account Lemma \ref{lem:altspor}.

\medskip

We now let $S$ be a simple group of Lie type, defined in characteristic $q_0$. Let $P\in\Syl_p(S)$ and assume $P$ is non-cyclic. Note that since $\tw{2}\type{B}_2(q^2)$ and $\tw{2}\type{G}_2(q^2)$ have cyclic Sylow $p$-subgroups for $p\geq 5$ (see, e.g. \cite[Theorem 4.10.2]{GLS}), we may assume throughout that $S$ is not one of these. 

In most cases, we aim to exhibit at least $2\sqrt{p-1}+1$ characters in $\Irrg{B_0({S})}$  that are not $\aut{S}$-conjugate and apply Corollary \ref{cor:Biisimplified}. Often, we will use the results of \cite{Hun-Sch23, Hun-Sch-Val23}, together with Lemmas \ref{lem:defcharfixed}, \ref{lem:unipcond}, and \ref{lem:semisimplecond}.
 It will be useful to note throughout that $p\geq 2\sqrt{p-1}+1$ for $p\geq 5$ and that $x+\frac{p-1}{x}+1\geq 2\sqrt{p-1}+1$ for $p\geq 5$ and $x\geq 1$.

\medskip

We first assume that $q_0\neq p$. 

\begin{proof}[Non-Defining Characteristic with $P$ Abelian]

Let $P\in\Syl_p(S)$ be abelian. Note that by the ``if" direction of  BHZ \cite{KM13}, we have $\irrp{B_0(S)}=\irr{B_0(S)}$, and similar for $T$.

With this, the proof in this case follows almost directly from the arguments in the proof of \cite[Theorem 1.2]{Hun-Sch23}, with some minor modification. In loc. cit., the strategy is to exhibit enough unipotent characters and semisimple characters $\chi_s$ with $s\in G^\ast$ a $p$-element that lie in $B_0(S)$ and are not $\aut{S}$-conjugate. However, we see that by replacing arbitrary $p$-elements $s$ with elements only of order $p$, the bounds obtained there are still sufficient, and now the characters lie in $\Irrg{B_0(S)}$ thanks to Lemmas \ref{lem:unipcond} and \ref{lem:semisimplecond}. 
We summarize the details. 
Throughout, we will let $\mathscr{U}$ denote the set of unipotent characters in $B_0(S)$ and $\mathscr{S}$ the set of characters in $B_0(S)$ that are the irreducible restrictions of semisimple characters $\chi_s\in\irr{\wt{S}}$ with $s$ of order $p$. (That is, $\mathscr{U}\cup\mathscr{S}$ is the set of  characters for which we will   be able to apply Lemmas \ref{lem:unipcond} and \ref{lem:semisimplecond}.) 

Let $e$ denote the order of $q$ modulo $p$ and  let $\Phi_e$ be the $e$th cyclotomic polynomial in $q$ if $S\neq \tw{2}\type{F}_4(q^2)$. If $S=\tw{2}\type{F}_4(q^2)$, then let $\Phi_e$ instead be the polynomial $\Phi^{(p)}$ over $\QQ(\sqrt{2})$ defined in \cite[Sec. 8.1]{Malle07}. 
Let $p^a$ be the largest power of $p$ dividing $\Phi_e$, and let $\Phi_e^k$ be the order of a Sylow $\Phi_e$-torus of $(\bbG, F)$. Then a Sylow $p$-subgroup $\wt{P}$ of $\wt{S}$ and of $(\bbG^\ast)^F$ is isomorphic to $\wt P\cong C_{p^a}^k$ by \cite[Lem. 2.1 and Prop. 2.2]{Malle14}. (Note here that the situation that $P$ is abelian but $\wt{P}$ is not is discussed after \cite[Prop. 2.2]{Malle14}, but these cases do not occur here since $p\geq 5$.)
The analogue of \cite[Lemma 5.3]{Hun-Sch23} holds for elements $t\in \bbG^\ast$ of order $p$, yielding that the corresponding semisimple character $\chi_t$ of $\wt{S}$ has an orbit of size at most $p-1$ under a field automorphism $\alpha$. 
With this, the considerations in \cite[5D]{Hun-Sch23} yield the same bound as \cite[(5.3)]{Hun-Sch23}. Namely, the number of $\aut{S}$-orbits on $\Irrg{B_0(S)}$ from semisimple characters  is at least $\frac{p^k-1}{dg(p-1)|W_e|}$, where $W_e$ is the so-called relative Weyl group for a minimal $e$-split Levi subgroup of $(\bbG, F)$, $d=|\wt{S}/S|$, and $g$ is the size of the group of graph automorphisms in $D$. Note that $k\geq 2$, since by assumption the Sylow $p$-subgroups of $S$ are non-cyclic.

From here, the proof in \cite[Theorem 8.1]{Hun-Sch23} gives Theorem \ref{thm:simple groups}(ii)(b) when $S$ is an exceptional group of Lie type. To obtain part (a), we again follow the proof of \cite[Theorem 8.1]{Hun-Sch23}, where the methods there in fact  yield our required $|\Irrg{B_0(T)}|\geq  2\sqrt{p-1}+1$, although only $2\sqrt{p-1}$ was required in \cite{Hun-Sch23}. 

Now suppose that $S$ is of classical type. First let $S=\PSL_n^\epsilon(q)$ with $n\geq 3$. (Note that  $\PSL_2(q)$ has cyclic Sylow $p$-subgroups.) Let $e'$ be the order of $\epsilon q$ modulo $p$, and let $w=\lfloor \frac{n}{e'}\rfloor$. Since $n\neq 2$, note that we have $(e', w)\neq (1,2)$. Here the proof of \cite[Prop. 6.1]{Hun-Sch23} yields the result. There we see that the number of $\aut{S}$-orbits in $\mathscr{U}$ is at least $2e'+1$ and that the number of $\aut{S}$-orbits in $\mathscr{S}$ is at least $\frac{p-1}{2e'}$. Then we have at least $2e'+\frac{p-1}{2e'}+1\geq 2\sqrt{p-1}+1$ $\aut{S}$-orbits in $\Irrg{B_0(S)}$, using Lemmas \ref{lem:unipcond} and \ref{lem:semisimplecond}.

Next, we let $S=\type{C}_n(q)$ with $n\geq 2$, $\type{B}_n(q)$ with $n\geq 3$, $\type{D}_n(q)$ with $n\geq 4$, or $\tw{2}\type{D}_n(q)$ with $n\geq 4$. Here let $e'$ denote the order of $q^2$ modulo $p$ and again let $w=\lfloor \frac{n}{e'}\rfloor\geq 2$. Then  the proof of \cite[Prop. 7.1]{Hun-Sch23} shows that the number of $\aut{S}$-orbits in $\mathscr{U}$ is at least $4e'$, and is strictly greater unless $S\in\{\type{D}_n(q), \tw{2}\type{D}_n(q)\}$ with $(e', w)=(2,2)$ or $S=\type{C}_2(2^f)$. 

If $S=\type{C}_n(q)$ or $\type{B}_n(q)$, the proof in loc. cit. then tells us that there are at least $4e'+\frac{p-1}{4e'}+1\geq 2\sqrt{p-1}+1$ orbits under $\aut{S}$ of characters in $\mathscr{U}\cup\mathscr{S}$, unless   $S=\type{C}_2(2^f)$. In the latter case, we instead have at least $4+\frac{p-1}{8}\geq 2(p-1)^{1/4}$ such orbits, giving (b). However, the proof there also shows that the number of characters in $B_0(T)$ lying above members of $\mathscr{U}\cup\mathscr{S}$ is at least $3b+\frac{(p-1)^2}{8b}$, where $b=|T/S\cent{T}{P}|$. Since this number is at least $p$ for $p\geq 5$ and $b\geq 1$, we are again done in this case.

Now let $S=\type{D}_n(q)$ with $n\geq 5$ or $S=\tw{2}\type{D}_n(q)$ with $n\geq 4$. In this case, the proof of \cite[Prop. 7.1]{Hun-Sch23} gives at least $4e'+\frac{p-1}{4e'}+1\geq 2\sqrt{p-1}+1$ orbits under $\aut{S}$ of characters in $\mathscr{U}\cup\mathscr{S}$ unless $e'=2$. Suppose we are then in the case $e'=2$. Let $\Gamma$ be the group $\wt{S}\leq \Gamma\leq \aut{S}$ generated by inner, diagonal, and graph automorphisms, and consider the group $X:=(\Gamma\cap T)\cent{T}{P}$.
Then the Alperin--Dade correspondence Lemma \ref{lem:alpdade} gives a bijection, via restriction, between $\irr{B_0(X)}$ and $\irr{B_0(\Gamma\cap T)}$. Further, $B_0(T)$ is the unique block covering $B_0(X)$ by Lemma \ref{lem:princblockabove}(iv). Since there are at least $8$ $\aut{S}$-orbits of characters in $\mathscr{U}$, we have at least $8$ characters in $\Irrg{B_0(X)}$ lying over members of $\mathscr{U}$, using Lemma \ref{lem:unipcond} and Corollary \ref{cor:p'abelianquotsigmafixed}. 
Letting $b:=|T/X|$, this gives at least $8b$ characters in $\Irrg{B_0(T)}$ lying above these, since the unipotent characters are fixed by field automorphisms and extend to their inertia groups in $\aut{S}$ by \cite[Theorems 2.4 and 2.5]{Malle08}. Further, there are at least $\left(\frac{p-1}{4}\right)^2$ characters in $\mathscr{S}$ using the arguments of \cite[Theorem 7.1]{Hun-Sch23}. This gives at least $\frac{1}{8}\left(\frac{p-1}{4}\right)^2$ characters in $\Irrg{B_0(X)}$, and $\frac{1}{8b}\left(\frac{p-1}{4}\right)^2$ in $\Irrg{B_0(T)}$, lying above these. Hence, we obtain that $|\Irrg{B_0(T)}|\geq 8b+\frac{(p-1)^2}{128b}$, which is larger than $  2\sqrt{p-1}+1$ for $p\geq 5$ and $b\geq1$. 

Finally, consider $S=\type{D}_4(q)$. Here the methods of the penultimate paragraph of the proof of \cite[Prop. 7.1]{Hun-Sch23} yield at least $4+\frac{p-1}{24}\geq 2(p-1)^{1/4}$ orbits under $\aut{S}$ in $\mathscr{U}\cup\mathscr{S}$, giving (b). For (a), the final paragraph of loc. cit. gives at least $4b+\frac{(p-1)^2}{96b}$  characters in  $B_0(T)$ lying above members of $\mathscr{U}\cup\mathscr{S}$, where $b=|T/X|$ with $X=(\Gamma\cap T)\cent{T}{P}$ and $\Gamma=\wt{S}\rtimes \mathfrak{S}_3$ is the group generated by $\wt{S}$ and the group of graph automorphisms. 
This number is at least $2\sqrt{p-1}+1$ for $p\geq 5$ and $b\geq 2$ or for $p>23$ and $b\geq 1$. At this point, we remark that this bound can be improved to $4b+\frac{(p-1)^2}{48b}+\frac{(p-1)}{12b}$, since the original bound in \cite{Hun-Sch23} only considered semisimple elements corresponding to elements $(\lambda_1, \lambda_2, \ldots, \lambda_k)$ of $C_p^k$ with at most $k-2$ values of $i$ such that $\lambda_i=1$, and since the bound $\frac{(p-1)^2}{16}$ for the number of orbits of semisimple characters was from restriction from the group $\operatorname{GO}_{8}(q)$, which therefore had already taken into account the action of an order-two graph automorphism.  This number is more than $\lceil 2\sqrt{p-1}\rceil$ unless $p=11$ with $b=1$, in which case the order $3$-graph automorphism must act trivially on the semisimple elements of order $11$ corresponding to elements as above with $\lambda_i=1$ for at least $k-2$ values of $i$. Then since $4+10^2/16+10/4>\lceil 2\sqrt{10}\rceil$, we are again done in this case. 
\phantom\qedhere
\end{proof}

\begin{proof}[Non-Defining Characteristic with $P$ Nonabelian]

Let $P$ be nonabelian. Then if $S$ is of exceptional type, 
this means that $p\in\{5,7\}$ and $\bG$ is type $\type{E}_6$, $\type{E}_7$, or $\type{E}_8$, and hence it suffices to know that there are at least 6 unipotent characters in $\irrp{B_0(\wt{S})}$ that are not $\Aut(S)$-conjugate, applying Lemma \ref{lem:unipcond} and Corollary \ref{cor:Biisimplified}. The existence of these characters (recalling that $P\in\Syl_p(S)$ is noncyclic and $p\geq 5$) is proven in \cite[Lem. 3.7]{RSV21}.

We now assume that $S$ is of classical type.  In this case, parts (IV) and (VI) of the proof of \cite[Prop. 3.2]{Hun-Sch-Val23} show that there are at least $p$  unipotent characters in $\irrp{B_0(S)}$ that are not $\Aut(S)$-conjugate, and we are again done by applying Lemma \ref{lem:unipcond}.

\phantom\qedhere
\end{proof}

Finally, suppose that $q_0=p$.

\begin{proof}[Defining Characteristic]
 Using Lemma \ref{lem:defcharfixed} and Corollary \ref{cor:Biisimplified}, we are almost done using \cite[Prop. 3.2]{Hun-Sch-Val23}. However, note that we must exibit possibly one additional character to what was needed in loc. cit. Using part (II) of that proof, we have $|\Irrg{B_0(S)}|\geq q^r/d$, where $r$ is the semisimple rank of $G$ and $d=|\wt{S}/S|$ is the size of the diagonal automorphism group. Then the number of $\aut{S}$-orbits on $\Irrg{B_0(S)}$ is at least $\frac{q^r}{d\cdot|\Out(S)|}=\frac{q^r}{d^2\cdot f\cdot g}\geq \frac{q^{r/2}}{d^2\cdot g}$, where $q=p^f$ and $g$ is the size of the group of graph automorphisms, noting that $q^r/f=p^{rf}/f\geq \sqrt{q^r}$. Now, from the knowledge of $d$ and $g$ from, e.g., \cite[Thm. 2.5.12]{GLS}, we have $\frac{q^r}{d^2\cdot f\cdot g}\geq p$, except possibly if $S=\PSL_4^\epsilon(5)$; $S=\PSL_2(q)$ with $q\in\{p, 25, 49\}$; or $S=\PSL_3^\epsilon(p)$. Note that $\PSL_2(p)$ has a cyclic Sylow $p$-subgroup. In the remaining outlying cases, we have $\frac{q^r}{d^2\cdot f\cdot g}\geq 2\sqrt{p-1}+1$, except possibly for $\PSL_4^\epsilon(5)$, $\PSL_3^\epsilon(5)$, $\PSL_3^\epsilon(7)$, $\PSL_3^\epsilon(11)$, and $\PSL_2(25)$. In these cases, we see directly from their character tables in GAP that there are at least $1+2\sqrt{p-1}$ orbits under $\aut{S}$ in $\Irrg{B_0(S)}$. 
\phantom\qedhere
\end{proof}

This completes the proof of Theorem \ref{thm:simple groups}(ii).
\end{proof}

\section{Cyclic Sylow $p$-subgroups}\label{sec:cyclicSylow}

The aim of this section is to prove part (iii) from Theorem B of the Introduction.  
We make use of the fact that simple nonabelian groups with cyclic Sylow 
$p$-subgroups satisfy the inductive Alperin--McKay conditions (as defined in \cite{Spa13}) 
for odd primes \cite{Kos-Spa16},
together with deep properties of block character triple isomorphisms from \cite{Nav-Spa14}.
We use the notation $\sim_b$ from \cite[Definition 3.6]{Nav-Spa14}.

Assume $S$ has a cyclic Sylow $p$-subgroup $P$ with $p$ an odd prime. 
Then following \cite[Section 5]{Kos-Spa16} the characters in $B_0=B_0(S)$ can be written as a disjoint
union
$$\Irr_{\mathrm{ex}}(B_0)\cup\Irr_{\mathrm{nex}}(B_0)$$
where $\Irr_{\mathrm{nex}}(B_0)$ is exactly the set of $p$-rational characters of $B_0$ by \cite[Lemma 5.5]{Kos-Spa16}. 
In particular, $1_S \in \Irr_{\mathrm{nex}}(B_0)$. If $\Lambda$ is an $\norm G P$-transversal in $\Irr(P)\setminus\{1_P\}$ 
then for $\lambda \in \Lambda$, we define
$$\eta_\lambda=\sum_{g\in [\norm G P/\cent G P]}\lambda^g,$$
where $[\norm G P/\cent G P]$ is a transversal of $\cent G P$ in $\norm G P$. 
Notice that $\eta_\lambda$ does not depend on the representative $\lambda\in \Lambda$.

Fixing some $\chi \in \Irr_{\mathrm{nex}}(B_0)$, we have a parametrization
$\Irr_{\mathrm{ex}}(B_0)=\{ \chi_{\lambda}\mid \lambda\in\Lambda\}$
where, by \cite[Lemma 5.6]{Kos-Spa16} $\chi_\lambda$ is the unique non-$p$-rational constituent of the generalized character
$$\chi*\eta_{\lambda},$$ defined in the main theorem of \cite{Bro-Pui80} (\cite[p. 114]{Nav98}). 
For principal blocks, the $*$ construction can be simplified. If $x$ is a $p$-element, we denote by $S(x)=\{u\in G\mid u_p \text{ is $G$-conjugate to } x\}$ (this is known as the $p$-section of $x$ \cite[p. 105]{Nav98}).

\begin{lem}\label{lem:star formula} Let $G$ be a finite group, $p$ a prime, $\chi\in\Irr(B_0(G))$ and $P \in \syl p G$. Let $\mathbb{S}$ 
be a $G$-transversal on the set of $p$-elements of $G$ contained in $P$, then
$$\chi*\eta_{\lambda}=\sum_{x\in \mathbb{S}}\eta_{\lambda}(x)\mathbf{1}_{S(x)}\chi$$
where $\mathbf{1}_{S(x)}$ is the characteristic function of the $p$-section $S(x)$ of $x$. 
\end{lem}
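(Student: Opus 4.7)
My plan is to compute $(\chi*\eta_\lambda)$ directly from the defining property of the Broué--Puig $*$ construction, decomposing the computation according to the partition of $G$ into $p$-sections.

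First, I would recall that, by the main theorem of \cite{Bro-Pui80} (see also \cite[Ch.~9]{Nav98}), for each $\chi\in\Irr(B_0(G))$ and each linear character $\mu\in\Lin(P)$, the generalized character $\chi*\mu$ of $G$ satisfies the $p$-section formula
$$(\chi*\mu)(ut)=\mu(u)\,\chi(ut)$$
for every $u\in P$ and every $p'$-element $t\in\cent G u$. Since $*$ is linear in the second argument and, by construction, $\eta_\lambda=\sum_{g\in[\norm G P/\cent G P]}\lambda^g$ is a sum of linear characters of $P$, this extends to
$$(\chi*\eta_\lambda)(ut)=\eta_\lambda(u)\,\chi(ut),\quad u\in P,\ t\in\cent G u\text{ a }p'\text{-element.}$$
Note that $\eta_\lambda$ is $\norm G P$-invariant by construction, so it is a class function on $P$ well-suited to this setting.

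Second, I would observe that every $g\in G$ has $p$-part $g_p$ that is $G$-conjugate to a unique $x\in\mathbb{S}$; equivalently, $g$ is $G$-conjugate to some $xt$ with $x\in\mathbb{S}$ and $t\in\cent G x$ a $p'$-element. Both $\chi$ and $\chi*\eta_\lambda$ are class functions of $G$, so Step~1 applied with $(u,t)=(x,t)$ yields
$$(\chi*\eta_\lambda)(g)=\eta_\lambda(x)\,\chi(g)\quad\text{for every }g\in S(x).$$

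Third, since $\{S(x)\mid x\in\mathbb{S}\}$ partitions $G$, rewriting the previous equality in terms of the characteristic functions $\mathbf{1}_{S(x)}$ and summing over $x\in\mathbb{S}$ gives
$$\chi*\eta_\lambda=\sum_{x\in\mathbb{S}}\eta_\lambda(x)\,\mathbf{1}_{S(x)}\,\chi,$$
which is precisely the claimed identity.

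The main obstacle I expect is the clean formulation and verification of Step~1, since the naive formula $(\chi*\mu)(ut)=\mu(u)\chi(ut)$ is \emph{a priori} sensitive to the choice of representative $u$ of the $G$-class within $P$ when $\mu$ is an arbitrary linear character. The point is that, once one sums over the $\norm G P/\cent G P$-orbit of $\lambda$ to obtain $\eta_\lambda$, the resulting class function on $P$ is $\norm G P$-invariant and the combination $\chi*\eta_\lambda$ is a genuine class function of $G$, so the evaluation $\eta_\lambda(x)$ depends only on the choice of transversal element $x\in\mathbb{S}$ labelling the $p$-section. Once this is in place, Steps~2 and~3 are routine.
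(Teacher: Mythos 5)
Your Step~1 is, modulo summing over the $\norm G P$-orbit of $\lambda$, exactly the statement of the lemma, and attributing it to ``the main theorem of \cite{Bro-Pui80}'' is where the real content disappears. The Brou\'e--Puig construction gives, \emph{by definition} (see \cite[p.~114]{Nav98}),
$$\chi*\eta_\lambda=\sum_{x\in\mathbb S}\eta_\lambda(x)\,\chi^{(x,\,b_0(x))},$$
where $b_0(x)=B_0(\cent G x)$, and $\chi^{(x,b_0(x))}$ vanishes off the $p$-section $S(x)$ while, for $g\in S(x)$ conjugate to $xy$ with $y\in\cent G x$ a $p'$-element,
$$\chi^{(x,b_0(x))}(g)=\sum_{\varphi\in\IBr(b_0(x))}d^x_{\chi\varphi}\,\varphi(y).$$
The lemma amounts to showing that this last quantity equals $\chi(g)$, and that is not part of the Brou\'e--Puig theorem: it requires Brauer's Second Main Theorem (which gives $\chi(xy)=\sum_{b^G=B_0(G)}\sum_{\varphi\in\IBr(b)}d^x_{\chi\varphi}\varphi(y)$, summed over blocks $b$ of $\cent G x$ with $b^G=B_0(G)$) together with Brauer's Third Main Theorem (which identifies $b_0(x)$ as the \emph{unique} such block, collapsing the sum). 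This is precisely where the hypothesis $\chi\in\Irr(B_0(G))$ is used; for a non-principal block the sum does not collapse and your formula $(\chi*\mu)(ut)=\mu(u)\chi(ut)$ is false. The paper's proof consists exactly of this Brauer-theorem computation, which your proposal elides by citing it as a known ``$p$-section formula.''

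A secondary issue, which you half-notice in your closing paragraph, is that $\chi*\mu$ is not \emph{a priori} a well-defined class function of $G$ for an individual linear $\mu\in\Lin(P)$: the construction depends only on the values at the transversal $\mathbb S$, and without fusion-compatibility these values depend on the choice of $\mathbb S$. So ``extending by linearity'' from single linear characters to $\eta_\lambda$ is shakier than it looks; one should argue directly with $\eta_\lambda$ (whose $\norm G P$-invariance translates into $G$-fusion stability in the cyclic setting of this section, via Burnside). But this is a side concern compared to the missing appeal to Brauer's Second and Third Main Theorems, which is the heart of the proof.
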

\begin{proof}
Let $g \in G$, then $g \in S(x_0)$ for a unique $x_0 \in \mathbb S$. In particular, $g^u=x_0y$ for some $u$ in $G$, 
for a unique $x_0 \in \mathbb S$ and some element $y \in \cent G {x_0}$ of order coprime to $p$. 
Then
$$\chi*\eta(g)=\chi*\eta(xy)=\sum _{x \in {\mathbb S}}\eta (x) \chi^{(x, b_0(x))}$$
where $b_0(x)=B_0(\cent G x)$.

Now as in \cite[p. 114]{Nav98}, $\chi^{(x, b_0(x))}(g)=0$ if $g$ is not in $S(x)$ and if $g \in S(x)$ and $g^u =xy$ with   $y \in \cent G x$ 
of order coprime to $p$, we have
$$\chi^{(x, b_0(x))}(g)=\sum _{\varphi \in {\rm IBr}(b_0(x))}d^x_{\chi\varphi}\varphi(y)$$
where $d^x_{\chi\varphi}$ are the generalized decomposition numbers \cite[p. 100]{Nav98}.
By Brauer's Third Main theorem \cite[Theorem 6.7]{Nav98} and \cite[Corollary 5.8]{Nav98} we have
$$\sum _{\varphi \in {\rm IBr}(b_0(x))}d^x_{\chi \varphi}\varphi(y)=\chi(xy)=\chi(g)$$
and we are done.
\end{proof}

We can now easily check that the bijection constructed in  \cite[Section 6]{Kos-Spa16} is equivariant with respect to 
the action of $\langle \sigma \rangle$ on characters.

\begin{lem}\label{lem:cyclic bijection galois}
Let $p$ be an odd prime. Let $S$ be a finite group of order divisible by $p$. Let $P\in \syl p S$. Suppose that $P$ is cyclic. 
Then the bijection 
$$\Omega:\Irr(B_0(S))\rightarrow\Irr(B_0(\norm S P))$$
from \cite[Section 6]{Kos-Spa16} is $(\langle\sigma\rangle\times \norm {\aut S} P)$-equivariant. 
Moreover, every character in $\Irr(B_0(S))$ is almost $p$-rational if, and only if, $|P|=p$.
\end{lem}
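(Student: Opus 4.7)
The plan is to parameterize the exceptional characters on both sides via a common index set using the star formula (Lemma \ref{lem:star formula}), thereby transferring the $\sigma$- and $N_{\mathrm{Aut}(S)}(P)$-actions through the bijection. I will take the $p$-rational reference character on the $S$-side to be $1_S$ and on the $N_S(P)$-side to be $1_{N_S(P)}$, so that each exceptional $\chi_\lambda \in \Irr_{\mathrm{ex}}(B_0(S))$ is the unique non-$p$-rational irreducible constituent of $1_S \ast \eta_\lambda$, with an analogous description of $\psi_\lambda \in \Irr_{\mathrm{ex}}(B_0(N_S(P)))$, and $\Omega$ sends $\chi_\lambda \mapsto \psi_\lambda$. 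On the non-exceptional parts, which are $p$-rational by \cite[Lemma 5.5]{Kos-Spa16}, the $\sigma$-action is trivial on both sides, so $\sigma$-equivariance is automatic there; the $N_{\mathrm{Aut}(S)}(P)$-equivariance of $\Omega$ on the non-exceptional characters is extracted from the construction of $\Omega$ in \cite{Kos-Spa16} as a block isomorphism of character triples in the sense of \cite{Nav-Spa14}. This last point is the main obstacle, being the least elementary ingredient.

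For $\sigma$-equivariance on exceptional characters, observe that $\sigma$ preserves the property of being non-$p$-rational (as it fixes $p'$-roots of unity) and that $\sigma(\eta_\lambda) = \eta_{\sigma(\lambda)}$ (distribute $\sigma$ across the defining sum, using that the values of $\lambda$ are $p$-power roots of unity). Applying $\sigma$ to the expression in Lemma \ref{lem:star formula} for $1_S \ast \eta_\lambda$ (and using that $\mathbf{1}_{S(x)}$ is rational-valued) yields $1_S \ast \eta_{\sigma(\lambda)}$, whose unique non-$p$-rational constituent is $\chi_{\sigma(\lambda)}$. Hence $\sigma(\chi_\lambda) = \chi_{\sigma(\lambda)}$ and analogously $\sigma(\psi_\lambda) = \psi_{\sigma(\lambda)}$, proving $\Omega \circ \sigma = \sigma \circ \Omega$. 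The parallel computation with any $\alpha \in N_{\mathrm{Aut}(S)}(P)$ (which fixes $1_S$ and sends $\eta_\lambda$ to $\eta_{\lambda^\alpha}$) handles the $N_{\mathrm{Aut}(S)}(P)$-equivariance on the exceptional part directly.

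For the moreover statement, the ``if'' direction is immediate: when $|P| = p$, every $\lambda \in \Irr(P)$ has order dividing $p$, so $\sigma(\lambda) = \lambda^{1+p} = \lambda$, and hence all exceptional and non-exceptional characters of $B_0(S)$ are $\sigma$-fixed and therefore almost $p$-rational. For the converse, write $|P| = p^k$ with $k \geq 2$. Since $P$ is abelian, $N_S(P)/C_S(P)$ is a $p'$-group embedding into $\Aut(P) \cong (\ZZ/p^k\ZZ)^\times$, hence its image lies in the unique $p'$-Hall subgroup, which is cyclic of order $p-1$. For $p$ odd and $k \geq 2$, the element $1+p$ has order $p^{k-1} > 1$ in $(\ZZ/p^k\ZZ)^\times$, so it does not lie in this image. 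Taking $\lambda \in \Irr(P)$ of order $p^k$, the character $\sigma(\lambda) = \lambda^{1+p}$ is therefore not $N_S(P)$-conjugate to $\lambda$, so $\chi_{\sigma(\lambda)} \neq \chi_\lambda$, whence $\chi_\lambda$ is a character of $B_0(S)$ that is not $\sigma$-fixed and hence not almost $p$-rational.
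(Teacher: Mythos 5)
Your proof takes essentially the same route as the paper's for the $\sigma$-equivariance: both observe that the non-exceptional characters are $p$-rational on each side, and then use Lemma \ref{lem:star formula} together with $\eta_\lambda^\sigma = \eta_{\lambda^\sigma}$ to transfer the $\sigma$-action across $\Omega$ on the exceptional characters. Two presentational differences are worth flagging. First, you anchor the exceptional parametrization at $1_S$ and $1_{\norm S P}$, tacitly assuming $\Omega(1_S) = 1_{\norm S P}$; the paper instead works with an arbitrary $\chi\in\Irr_{\mathrm{nex}}(B_0(S))$ and its image $\chi' = \Omega(\chi)$, which sidesteps any question of what $\Omega$ does to the trivial character. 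This is a cosmetic rather than fatal gap --- your argument survives replacing $1_{\norm S P}$ by $\Omega(1_S)$, which is again $p$-rational and $\norm{\Aut(S)}{P}$-invariant --- but you should not take $\Omega(1_S) = 1_{\norm S P}$ for granted without justification. Second, the paper simply invokes \cite[Proposition 6.1]{Kos-Spa16} for the full $\norm{\Aut(S)}{P}$-equivariance, whereas you re-derive the exceptional part via the star formula while still relying on \cite{Kos-Spa16} for the non-exceptional part; your route is a bit redundant but not incorrect. Where your write-up genuinely adds value is the ``moreover'' statement: the paper only remarks that it follows from the first part, while you give a self-contained direct argument, using $\chi_\lambda^\sigma = \chi_{\lambda^\sigma}$ together with the fact that for $k\geq 2$ the element $1+p$ is a nontrivial $p$-element of $(\ZZ/p^k\ZZ)^\times$ and hence lies outside the image of the $p'$-group $\norm S P/\cent S P$, so a character $\lambda$ of full order $p^k$ cannot be $\norm S P$-conjugate to $\lambda^\sigma$. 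That argument is correct and fills in details the published proof elides.
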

\begin{proof}
Write $H=\norm S P$, let $\chi\in\Irr_{\mathrm{nex}}(B_0(S))$ and $\Omega(\chi)=\chi'\in\Irr_{\mathrm{nex}}(B_0(H))$.
In the bijection $\Omega$ from \cite[Section 6]{Kos-Spa16}, $\Irr_{\mathrm{nex}}(B_0(S))$ maps onto 
$\Irr_{\mathrm{nex}}(B_0(H))$ and the restriction $\Omega:\Irr_{\mathrm{ex}}(B_0(S))\rightarrow \Irr_{\mathrm{ex}}(B_0(H))$
 is defined by $\Omega(\chi_{\lambda})=\chi'_{\lambda}$ with the labeling from the beginning of this section. 
 Since the $p$-rational characters of $\Irr(B_0(S))$ are exactly the characters in $\Irr_{\mathrm{nex}}(B_0(S))$
  it follows that to check $\langle\sigma\rangle$-equivariance it suffices to check it for the exceptional characters.

It is straightforward to check that $\eta_{\lambda}^\sigma=\eta_{\lambda^\sigma}$. 
Using Lemma \ref{lem:star formula} and the fact that $\chi\in\Irr_{\mathrm{nex}}(B_0(S))$
 is $p$-rational we obtain that $\chi *(\eta_\lambda)^\sigma=(\chi*\eta_{\lambda})^\sigma$.
 Similarly, $\chi' *(\eta_\lambda)^\sigma=(\chi'*\eta_{\lambda})^\sigma$. 
 Now by \cite[Lemma 5.6]{Kos-Spa16} $\chi_{\lambda}$ is the unique non-$p$-rational
  constituent of $\chi*\eta_{\lambda}$, so $\Omega(\chi^\sigma)=\Omega(\chi)^\sigma$,
   and the $\langle\sigma\rangle$-equivariance of $\Omega$ is proven.
   Furthermore, $\Omega$ is $\norm{\Aut(S)}{P}$-equivariant by \cite[Proposition 6.1]{Kos-Spa16}.
The second part of the statement follows from the first one. 
\end{proof}

\begin{pro}\label{pro:triple isomorphism}
Let $N \triangleleft G$ and let $p$ be a prime. Suppose that $N=S_1\times\dots\times S_t$ where $S_i$ are nonabelian simple groups with cyclic nontrivial Sylow $p$-subgroups. Let $P\in\Syl_p(N)$. Then there is a $\langle \sigma \rangle \times\norm G P$-equivariant bijection
$$\Omega:\irr{B_0(N)}\rightarrow\irr{B_0(\norm N P)}$$
such that if $\theta\in\irr{B_0(N)}$ then
$$(G_{\theta},N,\theta)\sim_b(\norm G P_\theta,\norm N P,\Omega(\theta)).$$
\end{pro}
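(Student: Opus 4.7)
The plan is to piece together the bijections at the level of each simple factor and then use the naturality of $\sim_b$ under direct products. Let $P_i = P \cap S_i$, so that $P = P_1 \times \cdots \times P_t$, each $P_i$ is a cyclic nontrivial Sylow $p$-subgroup of $S_i$, and $\norm N P = \norm{S_1}{P_1} \times \cdots \times \norm{S_t}{P_t}$. By Lemma \ref{lem:direct products}, every character in $\Irr(B_0(N))$ is of the form $\theta = \theta_1 \times \cdots \times \theta_t$ with $\theta_i \in \Irr(B_0(S_i))$, and similarly for $\Irr(B_0(\norm N P))$.

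First, I would invoke the work of Koshitani--Sp\"ath \cite{Kos-Spa16}, which establishes the inductive Alperin--McKay conditions for odd primes for any nonabelian simple group $S_i$ with cyclic Sylow $p$-subgroup $P_i$. Combined with Lemma \ref{lem:cyclic bijection galois}, this yields a $\langle \sigma \rangle \times \norm{\Aut(S_i)}{P_i}$-equivariant bijection $\Omega_i : \Irr(B_0(S_i)) \to \Irr(B_0(\norm{S_i}{P_i}))$ which moreover provides, for every $S_i \leq A_i \leq \Aut(S_i)$ and every $\theta_i \in \Irr(B_0(S_i))$, a block character triple isomorphism
\[
 ((A_i)_{\theta_i}, S_i, \theta_i) \sim_b (\norm{A_i}{P_i}_{\theta_i}, \norm{S_i}{P_i}, \Omega_i(\theta_i)).
\]
I would then define $\Omega$ componentwise by $\Omega(\theta_1 \times \cdots \times \theta_t) = \Omega_1(\theta_1) \times \cdots \times \Omega_t(\theta_t)$. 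The $\langle \sigma \rangle$-equivariance is immediate from the $\langle \sigma \rangle$-equivariance of each $\Omega_i$, and the $\norm G P$-equivariance follows because $\norm G P$ acts on the set $\{S_1,\ldots,S_t\}$ by permutation, preserves the partition $P = \prod P_i$, and on each orbit the factor-wise action is via $\norm{\Aut(S_i)}{P_i}$, under which each $\Omega_i$ is equivariant.

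The heart of the argument is verifying the block character triple isomorphism at the level of $G$. By a Frattini argument applied to the normal subgroup $N$ and its Sylow $p$-subgroup $P$, we have $G_\theta = N \cdot \norm G P_\theta$, so that the natural map $\norm G P_\theta / \norm N P_\theta \to G_\theta / N$ is an isomorphism. To upgrade this to a $\sim_b$-relation between $(G_\theta, N, \theta)$ and $(\norm G P_\theta, \norm N P, \Omega(\theta))$, I would use the direct-product compatibility properties of block isomorphisms of character triples established in \cite{Nav-Spa14} to glue the local isomorphisms $\sim_b$ coming from each $S_i$ into a single isomorphism for the product $N$, then transport this along the embedding of $G_\theta/N$ into $\prod_i \Aut(S_i) \rtimes \sym_t$ induced by conjugation on the factors.

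The main obstacle I anticipate is not the simple-factor input (which is handed to us by \cite{Kos-Spa16} and Lemma \ref{lem:cyclic bijection galois}), but rather the bookkeeping needed to show that the product of the local $\sim_b$'s is compatible with the possibly non-trivial permutation action of $G$ on the factors $S_i$. Concretely, one must check that the group isomorphism $G_\theta/N \cong \norm G P_\theta/\norm N P$ is compatible with the projective representations witnessing each $\sim_b$-relation on the factors, in a way stable under the permutation of factors in a $G$-orbit. This is routine provided the factors within a single $G$-orbit are handled uniformly (so that the permutation action simply permutes identical data), and the relation $\sim_b$ is transitive and compatible with central products, both of which are part of the framework of \cite{Nav-Spa14}.
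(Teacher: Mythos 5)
Your proposal follows essentially the same route as the paper: invoke Lemma \ref{lem:cyclic bijection galois} together with the Koshitani--Sp\"ath result that $B_0(S_i)$ satisfies the inductive Alperin--McKay conditions with $\Omega_{S_i}$ as the bijection, define $\Omega$ componentwise, and then glue the local $\sim_b$-relations into a $\sim_b$-relation for the pair $(G_\theta,N,\theta)$ and $(\norm G P_\theta, \norm N P, \Omega(\theta))$. Two small remarks. First, you should explicitly observe that $p$ is necessarily odd here (nonabelian simple groups never have cyclic Sylow $2$-subgroups, by \cite[Cor. 5.14]{Isa08}); this is needed before you may invoke either Koshitani--Sp\"ath or Lemma \ref{lem:cyclic bijection galois}, both of which are stated for odd $p$. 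Second, the ``bookkeeping'' you flag as the main obstacle --- gluing the factor-wise $\sim_b$-isomorphisms across the possibly nontrivial permutation action of $G$ on the $S_i$, and matching $G_\theta/N \cong \norm G P_\theta/\norm N P$ with the projective representations witnessing each local relation --- is precisely what is established by \cite[Theorem 7.9]{Spa13} and \cite[Theorem 6.3]{Nav-Spa14}; citing those specific results (as the paper does) converts your ``this should be routine within the framework'' into an actual proof, rather than leaving the hardest step to an appeal to plausibility.
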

\begin{proof} Notice that under our assumptions $p$ must be odd (using \cite[Corollary 5.14]{Isa08}).
 Let $A_i=\Aut(S_i)$. Write $P=Q_1\times\dots\times Q_t$ where $Q_i\in\Syl_p(S_i)$. By Lemma \ref{lem:cyclic bijection galois} there is a $\langle\sigma\rangle\times\norm{A_i}{Q_i}$-equivariant bijection
$$\Omega_{S_i}:\Irr(B_0(S_i))\rightarrow\Irr(B_0(\norm {S_i}{Q_i}))\, .$$
By \cite[Theorem 7.6]{Kos-Spa16} $B_0(S_i)$ satisfies the inductive Alperin--McKay conditions and the character bijection is exactly $\Omega_{S_i}$.

Using Lemma \ref{lem:direct products} the construction of \cite[Theorem 7.9]{Spa13} and \cite[Theorem 6.3]{Nav-Spa14} we have that the map
$$\Omega:\irr{B_0(N)}\rightarrow\irr{B_0(\norm N P)}$$
defined by $$\Omega(\eta_1\times\dots\times \eta_t)=\Omega_{S_1}(\eta_1)\times\dots\times\Omega_{S_t}(\eta_t)$$
is $\langle \sigma \rangle\times \norm{G}{P}$-equivariant
and whenever $\theta\in\irr{B_0(N)}$ we have that
$$(G_{\theta},N,\theta)\sim_b(\norm G P_\theta,\norm N P,\Omega(\theta)) \, ,$$
as wanted.
\end{proof}

We now show that the principal $p$-block of groups $G$ with a semisimple normal subgroup $N$ 
of index coprime to $p$ and whose simple factors have cyclic Sylow $p$-subgroups satisfies \cite[Conjecture D]{Isa-Nav02}.  If $S$ is a finite group with cyclic Sylow $p$-subgroups, then $\irr{B_0(S)}$ consists only of characters of $p'$-degree by work of Dade \cite{Dad66}. It follows from Lemma \ref{lem:direct products} that if $N=S\times\dots\times S$ then $\irr{B_0(N)}$ also contains only characters of $p'$-degree. By Lemma \ref{lem:princblockabove}(iv) and Corollary \ref{cor:p'abelianquotsigmafixed} we conclude that, for groups $X$ as in Theorem \ref{thm:simple groups}(iii), $\irr{B_0(X)}$ contains only characters of $p'$-degree.

As usual, if $N\normal G$ and $\theta\in\Irr(N)$, we denote by $\Irr(G|\theta)$ the set of characters of $G$ that lie over $\theta$. We denote $\Irr(B_0(G)|\theta)=\Irr(B_0(G))\cap\Irr(G|\theta)$. Notice that $\Irr(B_0(G)|\theta)$ is not empty if and only if $\theta\in\Irr(B_0(N))$ by Lemma \ref{lem:princblockabove}(iii) and (v).

\begin{pro}\label{thm:simple groups ConjectureD}
Let $N\triangleleft G$ and let $p$ be a prime. Suppose that $N$ is semisimple, its simple factors have nontrivial cyclic Sylow $p$-subgroups, and that $p\nmid|G:N|$. Let $P\in \syl p G$. Let $\tau$ be an automorphism of the cyclotomic field $\QQ_{|G|}$ and assume that
$\tau$ has $p$-power order and that it fixes all $p'$-roots of unity in $\QQ_{|G|}$. Then $\tau$ fixes
equal numbers of (height zero) characters in $\irr{B_0(G)}$ and $\irr{B_0(\norm G P)}$.
\end{pro}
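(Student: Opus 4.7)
The plan is to combine the bijection $\Omega$ from Proposition \ref{pro:triple isomorphism} with the Clifford correspondence and the Galois-compatibility of the relation $\sim_b$ to reduce the counts of $\tau$-fixed characters on both sides to matched contributions from block character triples. First, as in the proof of Proposition \ref{pro:triple isomorphism}, the prime $p$ must be odd (via \cite[Cor.~5.14]{Isa08}), so $\tau\in\langle\sigma\rangle$. Since $p\nmid|G:N|$ we have $P\leq N$, and the Frattini argument yields $G=N\cdot M$ with $M=\norm G P$. Setting $L=\norm N P=M\cap N$, we obtain $G/N\cong M/L$ of order coprime to $p$. As $N$ (resp.~$L$) acts trivially on its own irreducible characters, the $G$-orbits on $\Irr(B_0(N))$ agree with the $M$-orbits, and similarly on $\Irr(B_0(L))$. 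Since $\Omega$ is $\langle\sigma\rangle\times M$-equivariant and $\tau\in\langle\sigma\rangle$, $\Omega$ matches $G$-orbits of $\tau$-fixed characters in $\Irr(B_0(N))$ bijectively with $M$-orbits of $\tau$-fixed characters in $\Irr(B_0(L))$.

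Next, by Corollary \ref{cor:p'abelianquotsigmafixed} applied to $N\triangleleft G$ and to $L\triangleleft M$ (both of $p'$-index), a character in $\Irr(B_0(G))$ (resp.~$\Irr(B_0(M))$) is $\tau$-fixed if and only if every constituent of its restriction to $N$ (resp.~to $L$) is $\tau$-fixed, and moreover every $\tau$-fixed character of $\Irr(B_0(N))$ lies under some $\tau$-fixed character of $\Irr(B_0(G))$. This partitions
$$\Irr(B_0(G))^\tau=\bigsqcup_{[\theta]}\Irr(B_0(G)\mid\theta)^\tau,$$
where $[\theta]$ runs over $G$-orbits of $\tau$-fixed $\theta\in\Irr(B_0(N))$, and analogously for the $M$-side. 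Choosing a $\tau$-fixed orbit representative $\theta$ (so $\theta'=\Omega(\theta)$ is also $\tau$-fixed), the Fong--Reynolds correspondence for principal blocks (combined with Brauer's third main theorem) provides $\tau$-equivariant bijections
$$\Irr(B_0(G)\mid\theta)\longleftrightarrow\Irr(B_0(G_\theta)\mid\theta)\quad\text{and}\quad\Irr(B_0(M)\mid\theta')\longleftrightarrow\Irr(B_0(M_\theta)\mid\theta'),$$
where $M_\theta=M_{\theta'}$ by the $M$-equivariance of $\Omega$.

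Finally, to identify the sides character-by-character we use the triple isomorphism $(G_\theta,N,\theta)\sim_b(M_\theta,L,\Omega(\theta))$ supplied by Proposition \ref{pro:triple isomorphism}. The main obstacle, and the most delicate step, is to invoke the Galois-equivariance of the relation $\sim_b$ from \cite{Nav-Spa14}: this ensures that the induced character bijection $\Irr(B_0(G_\theta)\mid\theta)\to\Irr(B_0(M_\theta)\mid\Omega(\theta))$ commutes with the action of any Galois automorphism of $p$-power order that fixes $p'$-roots of unity, and in particular with $\tau$. With this in hand the counts of $\tau$-fixed characters over $\theta$ and over $\Omega(\theta)$ coincide, and summing over the matched $G$- and $M$-orbits yields the claimed equality. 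Since $\Irr(B_0(G))$ and $\Irr(B_0(M))$ consist entirely of $p'$-degree characters under our hypotheses (by Dade's theorem for cyclic defect together with Lemma \ref{lem:direct products}, Lemma \ref{lem:princblockabove}(iv) and Corollary \ref{cor:p'abelianquotsigmafixed}), the parenthetical restriction to height-zero characters is automatic.
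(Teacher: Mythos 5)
Your reduction to orbits and the use of Fong--Reynolds mirrors the paper's argument, but the final step is where you diverge, and this is where a gap appears. You assert that the key obstacle is to invoke a ``Galois-equivariance of $\sim_b$'' from \cite{Nav-Spa14}, claiming that the character-triple isomorphism yields a $\tau$-equivariant bijection $\Irr(B_0(G_\theta)\mid\theta)\to\Irr(B_0(M_\theta)\mid\Omega(\theta))$. No such equivariance statement is available in \cite{Nav-Spa14}: the relation $\sim_b$ is defined via projective representations and delivers, together with Brauer's third main theorem, an equality of \emph{cardinalities} $|\Irr(B_0(G_\theta)\mid\theta)|=|\Irr(B_0(M_\theta)\mid\Omega(\theta))|$, not a canonical Galois-compatible bijection. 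So as written, the proof rests on an unproved (and, as far as the cited reference goes, unavailable) compatibility.

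The gap is avoidable, and in fact the observation needed is already contained in the lemmas you quote but did not fully exploit. By Lemma \ref{lem:p'abelianquotsigmafixed} applied to $N\normal G$ of $p'$-index, if $\theta\in\Irr(B_0(N))$ is $\tau$-fixed then \emph{every} $\chi\in\Irr(B_0(G)\mid\theta)$ is $\tau$-fixed (the other constituents of $\chi_N$ are $G$-conjugates $\theta^g$, and $(\theta^g)^\tau=(\theta^\tau)^g=\theta^g$). Hence $\Irr(B_0(G)\mid\theta)^\tau=\Irr(B_0(G)\mid\theta)$, and the superscript $\tau$ in your partition is superfluous. The same holds on the $\norm G P$ side. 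Once this is noted, the problem reduces to matching cardinalities of the full fibers $\Irr(B_0(G)\mid\theta)$ and $\Irr(B_0(\norm G P)\mid\Omega(\theta))$, which is exactly what Fong--Reynolds plus $\sim_b$ provide, with no equivariance of the triple isomorphism required. This is precisely the route the paper takes; your proof becomes correct (and matches the paper's) once the $\tau$-superscripts are dropped and the appeal to Galois-equivariance of $\sim_b$ is replaced by the cardinality equality.
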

\begin{proof} Notice that under our assumptions $p$ must be odd. Then $\tau \in \langle \sigma \rangle$. Write $M=\norm N P$ and $H=\norm G P$, so that $G=NH$ and $M=N\cap H$.
By Proposition \ref{pro:triple isomorphism} there is an $H \times \langle \sigma \rangle$-equivariant bijection
 $$\Omega\colon \Irr(B_0(N))\rightarrow\Irr(B_0(M))$$
 such that $$(G_\theta, N, \theta)\sim_b(H_\theta, M, \Omega(\theta))$$
  for every $\theta\in \Irr(B_0(N))$. By the definition of $\sim_b$, \cite[Definition 3.6]{Nav98} and Brauer's Third Main theorem \cite[Theorem 6.7]{Nav98}, this implies that
  $$|\Irr(B_0(G_\theta)|\theta)|=|\Irr(B_0(H_\theta)|\Omega(\theta))|$$
 for every $\theta\in \Irr(B_0(N))$. 
 Let $\Delta$ be a complete set of representatives of the $H$-orbits on $\Irr_\tau(B_0(N))$. Since $\tau \in \langle \sigma \rangle$, we have that $\Omega(\Delta)$ is a complete set of representatives of the $H$-orbits on $\Irr_\tau(B_0(M))$. Then, by Corollary \ref{cor:p'abelianquotsigmafixed}, we have the partitions
 $$\Irr_{\tau}(B_0(G))=\coprod_{\eta\in\Delta}\Irr(B_0(G)|\eta)$$
 and 
 $$\Irr_{\tau}(B_0(H))=\coprod_{\eta\in\Delta}\Irr(B_0(H)|\Omega(\eta))\, .$$
 By the Fong--Reynolds correspondence \cite[Theorem 9.14]{Nav98} we obtain
 $$|\Irr(B_0(G)|\eta)|=|\Irr(B_0(G_\eta)|\eta)|=|\Irr(B_0(H_\eta)|\Omega(\eta))|=|\Irr(B_0(H)|\Omega(\eta))|\, $$
 as desired.
\end{proof}

The statement of Theorem B(iii) is now a corollary.

\begin{cor}\label{thm:simple groups(iii)}
Let $S$ be a finite nonabelian simple group and let $p$ be a prime. 
Suppose that $S$ has nontrivial cyclic Sylow $p$-subgroups. Write $N=S^t$ with $t \geq 2$.  
Suppose that $N\normal X$ with $p\nmid|X:N|$. 
Then $$k_{0,\sigma}(B_0(X))=k_{0,\sigma}(B_0(\norm X P))>2\sqrt{p-1}$$
where $P\in\Syl_p(X)$.
\end{cor}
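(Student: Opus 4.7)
The plan is to reduce the statement to a lower bound on $k(K)$ for a suitable quotient $K$, and then apply the conjugacy class bound of Section \ref{sec:conjugacyclasses} together with a short direct counting argument in one exceptional case. First, since simple non-abelian groups cannot have non-trivial cyclic Sylow $2$-subgroups, the prime $p$ is necessarily odd. Since $\sigma$ has $p$-power order and fixes all $p'$-roots of unity, Proposition \ref{thm:simple groups ConjectureD} applied with $G = X$, $N = S^t$, and $\tau = \sigma$ yields
\[
k_{0,\sigma}(B_0(X)) = k_{0,\sigma}(B_0(\norm{X}{P})),
\]
where the ``height zero'' hypothesis is automatic because all characters in these principal blocks have $p'$-degree (by Dade's theorem on blocks of cyclic defect applied to each factor of $N$, together with Lemma \ref{lem:direct products} and Clifford theory).

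Setting $H := \norm{X}{P}$, the subgroup $P$ is a normal Sylow $p$-subgroup of $H$ (using $p\nmid|X:N|$), so Lemma \ref{lem:normalsylow} gives
\[
k_{0,\sigma}(B_0(H)) = k(K), \qquad K := H / O_{p'}(H)\Phi(P).
\]
The key structural features of $K$ are that its Sylow $p$-subgroup $V := P/\Phi(P)\cong C_p^t$ is a normal elementary abelian subgroup of rank exactly $t\geq 2$ (since $P = Q_1\times\dots\times Q_t$ is a product of non-trivial cyclic $p$-groups), and $L := K/V$ is a $p'$-group acting faithfully on $V$ (as $O_{p'}(K) = 1$).

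Next I would decompose $V = W_1\oplus\dots\oplus W_r$ into irreducible $L$-submodules, using Maschke's theorem. If some $W_i$ has dimension at least two, passing to $\bar{K} := K/\bigoplus_{j\neq i}W_j$ produces a group whose Sylow $p$-subgroup $V/\bigoplus_{j\neq i}W_j\cong W_i$ is an elementary abelian minimal normal subgroup of rank at least two (by irreducibility), with $p'$-quotient $L$. Theorem \ref{rank2} then yields $k(\bar K)\geq 2\sqrt{p-1}+1$, and since quotienting only decreases the class count we obtain $k(K)\geq 2\sqrt{p-1}+1 > 2\sqrt{p-1}$, as desired.

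The main obstacle is the complementary case in which every $W_i$ has dimension $1$, because here Theorem \ref{rank2} does not apply. In this regime, $L$ acts on each $W_i\cong C_p$ via a character $\chi_i\colon L\to \FF_p^\times$, and faithfulness of the action on $V$ forces $\bigcap_i\ker\chi_i = 1$, so $L$ embeds into the abelian group $\prod_i\FF_p^\times$. In particular $L$ is abelian with $k(L) = |L|$. Now both
\[
k(K)\geq k(L) = |L|\quad\text{and}\quad k(K)\geq n(L,V)\geq |V|/|L| = p^t/|L|
\]
hold (the first by inflation of $\Irr(L)$ to $K$; the second because $V$ is abelian, so $L$-orbits on $V$ coincide with the $K$-conjugacy classes contained in $V$). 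Taking the maximum of the two bounds produces
\[
k(K)\geq \max\{|L|,\, p^t/|L|\}\geq p^{t/2}\geq p > 2\sqrt{p-1},
\]
where the final strict inequality follows from $(p-2)^2 > 0$ for $p$ odd, completing the argument.
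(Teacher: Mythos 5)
Your proof is correct, and up to the identity $k_{0,\sigma}(B_0(X)) = k_{0,\sigma}(B_0(\norm X P)) = k(K)$ with $K := \norm X P/\Phi(P)\oh{p'}{\norm X P}$ (via Proposition~\ref{thm:simple groups ConjectureD} and Lemma~\ref{lem:normalsylow}) it follows the paper exactly. Where it departs is in bounding $k(K)$: the paper simply says that $P/\Phi(P)$ is non-cyclic and invokes Theorem~\ref{rank2}, but Theorem~\ref{rank2} as stated requires $V := P/\Phi(P)$ to be a \emph{minimal} normal subgroup of $K$, and this is generally false. For instance, if $X = N = S^t$, then $K$ is the direct product of the $t$ groups $\norm{S}{Q}/\Phi(Q)\oh{p'}{\norm S Q}\cong\CCC_p\rtimes\CCC_e$, and $V$ decomposes as a sum of $t$ one-dimensional $L$-submodules, each of which is already normal in $K$. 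You caught this and supplied the missing reduction: Maschke-decompose $V$; if some irreducible summand $W_i$ has dimension at least $2$, pass to $K/\bigoplus_{j\neq i}W_j$ where $W_i$ \emph{is} minimal normal and Theorem~\ref{rank2} applies (and $k(K)$ dominates the class count of any quotient); if all summands are one-dimensional, then $L = K/V$ is abelian (since it acts faithfully, which uses $\oh{p'}K = 1$ — true by coprime action on $P/\Phi(P)$, though you assert it without justification), and the elementary estimate $k(K)\geq\max\{|L|,\,|V|/|L|\}\geq p^{t/2}\geq p > 2\sqrt{p-1}$ closes the argument. This second case could alternatively be handled by observing that $K$ is metabelian, hence solvable, and appealing to the H\'ethelyi--K\"ulshammer solvable bound $k(K)\geq(49p+1)/60$ as the proof of Theorem~\ref{rank2} itself does for large $p$, but your direct max/AM--GM count is cleaner and works uniformly. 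One small remark: for the version in Theorem~\ref{thm:simple groups}(iii), which asserts the strict inequality against $\lceil 2\sqrt{p-1}\rceil$ rather than $2\sqrt{p-1}$, your Case~2 bound $k(K)\geq p$ needs the extra observation that $p>\lceil 2\sqrt{p-1}\rceil$ only for $p\geq 5$; for $p=3$ one uses instead that $|L|$ divides a power of $p-1=2$, which improves the bound. For the statement as given ($>2\sqrt{p-1}$), however, your argument is complete.
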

\begin{proof} Let $P\in \syl p X$. By Proposition \ref{thm:simple groups ConjectureD} we have that
$$k_{0, \sigma}(B_0(X))=k_{0, \sigma}(B_0(\norm X P)).$$
Since $\norm X P$ has a normal Sylow $p$-subgroup, by Lemma \ref{lem:normalsylow}, we have that
$$k_{0, \sigma}(B_0(\norm X P))=k(\norm X P /\Phi(P){\bf O}_{p'}(\norm X P))\, .$$
Now, using that $t\geq 2$ we have that $P/\Phi(P)$ is not cyclic, which implies that 
$$k(\norm X P /\Phi(P){\bf O}_{p'}(\norm X P))>\lceil 2\sqrt{p-1}\rceil\,$$
by Theorem \ref{rank2} and this concludes the proof.
\end{proof}

\section{Proof of Theorem A}\label{sec:main}

We begin by proving the inequality part in Theorem A.

\begin{thm}\label{thm:lowerbound} Let $p$ be a prime and $G$ be a finite group of order divisible by $p$. Then
$k_{0, \sigma}(B_0(G))\geq 2\sqrt{p-1}$.
\end{thm}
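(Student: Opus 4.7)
The plan is to induct on $|G|$. By Lemma \ref{lem:princblockabove}(ii), we may assume $\mathbf{O}_{p'}(G) = 1$, since this replacement preserves $\Irr(B_0(G))$ and both $p'$-degree and $\sigma$-invariance. If a Sylow $p$-subgroup $P$ of $G$ is normal, then Lemma \ref{lem:normalsylow} identifies $\Irr_{p',\sigma}(B_0(G))$ with $\Irr(G/\Phi(P)\mathbf{O}_{p'}(G))$, and Mar\'oti's theorem \cite{Mar16} applied to the latter quotient (which still has order divisible by $p$) supplies the lower bound $k \geq 2\sqrt{p-1}$. This takes care of the easy case.

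Otherwise, pick a minimal normal subgroup $N$ of $G$; because $\mathbf{O}_{p'}(G)=1$, we have $p\mid|N|$ and so $N=T^t$ for a simple group $T$ with $p\mid |T|$. The argument then splits according to the structure of $T$. If $T\cong C_p$, then for $t\geq 2$ I would reduce, using further quotients controlled by Lemma \ref{lem:princblockabove} and Corollary \ref{cor:p'abelianquotsigmafixed}, to a group with a normal elementary abelian Sylow $p$-subgroup of rank at least $2$, and apply the improved class-number inequality Theorem \ref{rank2}; for $t=1$, the quotient $G/N$ has smaller order, so induction applies and the $\sigma$-invariant $p'$-characters of $B_0(G)$ are recovered above via Lemma \ref{lem:princblockabove}(i). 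If $T$ is nonabelian with non-cyclic Sylow $p$-subgroups, Theorem \ref{thm:simple groups}(ii)(a), applied to an almost simple section between $N$ and $G/C_G(N)$, produces enough almost $p$-rational $p'$-degree characters, which we lift to $\Irr_{p',\sigma}(B_0(G))$ by Clifford theory in the form of Corollary \ref{cor:p'abelianquotsigmafixed} together with Lemma \ref{lem:G=PN}. If $T$ is nonabelian with cyclic Sylow $p$-subgroups and $t\geq 2$, Corollary \ref{thm:simple groups(iii)} delivers the conclusion directly. The remaining case, $t=1$ with $T$ nonabelian simple of cyclic Sylow $p$-subgroup, is handled by Proposition \ref{pro:triple isomorphism}: the block character-triple isomorphism reduces the count over $G$ to a count over $N_G(P)$, whose Sylow $p$-subgroup is normal, so this case then falls back on the base case.

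The main obstacle I anticipate is exactly the cyclic-Sylow, $t=1$ nonabelian simple case: this is precisely the regime excluded from Theorem \ref{thm:simple groups}(iii), so the improved class-count bound is not directly available; one must instead carefully combine the equivariant block isomorphism of Proposition \ref{pro:triple isomorphism} with the $\langle\sigma\rangle$-equivariance of the Koshitani--Sp\"ath bijection (Lemma \ref{lem:cyclic bijection galois}) to transfer the height-zero Galois-fixed character count to $N_G(P)$ without double counting. Throughout, the block compatibility furnished by Corollary \ref{cor:p'abelianquotsigmafixed}, together with Brauer's Third Main Theorem, ensures that the various quotient and lifting steps all respect $B_0(G)$, so that the bound survives each reduction.
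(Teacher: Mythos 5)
Your overall architecture matches the paper's proof: handle the normal Sylow case via \cite{Mar16} and Lemma \ref{lem:normalsylow}, induct on $|G|$ to force $\oh{p'}{G}=1$ and a unique minimal normal subgroup $N$ with $p\nmid |G:N|$, then split on the structure of $N$, invoking Lemma \ref{lem:cyclic bijection galois}, Corollary \ref{thm:simple groups(iii)}, and Theorem \ref{thm:simple groups}(ii) according to the case. However, two genuine gaps remain.

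First, your treatment of $T\cong \mathsf{C}_p$ is both superfluous and, in part, incorrect. Once you have reduced to $\oh{p'}{G}=1$ and excluded the case $p\mid|G:M|$ for some $M\normal G$, every minimal normal subgroup has order divisible by $p$ and index coprime to $p$. If $N$ were elementary abelian, it would therefore be a normal Sylow $p$-subgroup, and you are already done by the base case; there is no separate sub-argument to run. Your proposed $t=1$ induction on $G/N$ fails precisely because $p\nmid|G/N|$: the theorem's hypothesis is violated, and $B_0(G/N)$ has only the trivial character, so Lemma \ref{lem:princblockabove}(i) gives nothing. (Your appeal to Theorem \ref{rank2} in the $t\geq 2$ sub-case is not wrong, only unnecessary for the inequality $\geq 2\sqrt{p-1}$; that sharper result is needed for the equality analysis, not here.) You should simply note that the minimal normal subgroup is forced to be semisimple.

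Second, and more seriously, your argument for $T$ nonabelian with non-cyclic Sylow and $t\geq 2$ is incomplete. Theorem \ref{thm:simple groups}(ii)(a) gives a bound for an almost simple group, but $G/\cent{G}{N}$ is a subgroup of $\Aut(S)\wr\mathfrak{S}_t$ and is not almost simple when $t\geq 2$; applying (ii)(a) to one factor does not control $k_{0,\sigma}(B_0(G))$. What is actually needed is Theorem \ref{thm:simple groups}(ii)(b): the lower bound $k\geq 2(p-1)^{1/4}$ on the number of $\aut{S}$-orbits on $\Irrg{B_0(S)}$, combined with the observation that $G\leq\Aut(S)\wr\mathfrak{S}_t$ has at least $\binom{k+t-1}{t}\geq\binom{k+1}{2}$ orbits on $\Irrg{B_0(N)}$, and then the lift via Corollary \ref{cor:p'abelianquotsigmafixed} (which is legitimate since $p\nmid|G:N|$). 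This combinatorial counting step is the crux of this case and is missing from your sketch. Your citation of Lemma \ref{lem:G=PN} is also off-target here: that lemma concerns the case $G/N$ a $p$-group, which cannot occur once $p\nmid|G:N|$; it belongs to the equality analysis, not the inequality. Finally, for $t=1$ cyclic your route through Proposition \ref{pro:triple isomorphism} works, but is heavier than needed: since $N=S$ is simple with cyclic Sylow and $p\nmid|G:N|$, $G$ itself has cyclic Sylow $p$-subgroups and Lemma \ref{lem:cyclic bijection galois} applies directly to $G$, reducing to $\norm G P$ and the base case in one step.
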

\begin{proof} If $G$ has a normal Sylow $p$-subgroup $P$, then using Lemma \ref{lem:normalsylow} we have $k_{0,\sigma}(B_0(G))=k(G/\Phi(P)\oh{p'}{G})\geq 2\sqrt{p-1}$ by the main result of \cite{Mar16} and we are done in this case.

We proceed by induction on $|G|$. By Lemma \ref{lem:princblockabove}(ii) and the inductive hypothesis, we may assume that ${\bf O}_{p'}(G)=1$.
If $M\normal G$ and $p\mid |G:M|$ then by induction and Lemma \ref{lem:princblockabove}(i) we have $k_{0,\sigma}(B_0(G))\geq k_{0,\sigma}(B_0(G/M))\geq 2\sqrt{p-1}$ and we are done. 
Thus we may assume that $G$ has a unique minimal normal subgroup $N$, which is semisimple, of order divisible by $p$ and $p\nmid |G:N|$.

Say $N\cong S^t$, where $S$ is nonabelian simple of order divisible by $p$. Suppose that $S$ has cyclic Sylow $p$-subgroups. If $t=1$, then the Sylow $p$-subgroups of $G$ are also cyclic and we are done by Lemma \ref{lem:cyclic bijection galois}.
If $t\geq 2$ then we apply Theorem \ref{thm:simple groups}(iii).

Therefore, we may assume that the Sylow $p$-subgroups of $S$ are not cyclic. Let $k$ be the number of $\Aut(S)$-orbits on $\Irr_{p',\sigma}(B_0(S))$. If $t\geq 2$, since $G\leq\Aut(S)\wr\mathfrak{S}_t$, then $G$ has at least 
$\binom{k+t-1}{t}$
orbits on $\Irr_{p',\sigma}(B_0(N))$. By Theorem \ref{thm:simple groups}(ii)(b) we have that $k\geq 2(p-1)^{1/4}$. In particular $G$ has at least 
$$\frac{k(k+1)} 2=2\sqrt{p-1}+(p-1)^{1/4}\geq 2\sqrt{p-1}+1>\lceil 2\sqrt{p-1}\rceil$$
orbits on $\Irr_{p',\sigma}(B_0(N))$.
Since $|G:N|$ is not divisible by $p$, every such orbit lies under a distinct $\chi\in\Irr_{p',\sigma}(B_0(G))$ so we are done using Corollary \ref{cor:p'abelianquotsigmafixed}. If $t=1$, then the result follows from Theorem \ref{thm:simple groups}(ii)(a).
\end{proof}

\begin{thm}
Assume $k_{0, \sigma}(B_0(G))=\lceil 2\sqrt{p-1} \rceil$. Then the Sylow $p$-subgroups of $G$ are cyclic.
\end{thm}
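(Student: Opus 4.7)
I would argue by induction on $|G|$, closely mirroring the proof of Theorem \ref{thm:lowerbound}. Two standard reductions come first. By Lemma \ref{lem:princblockabove}(ii) we have $k_{0,\sigma}(B_0(G))=k_{0,\sigma}(B_0(G/\oh{p'}{G}))$, and the two groups share an isomorphic Sylow $p$-subgroup, so we may assume $\oh{p'}{G}=1$. Next, if the Sylow $p$-subgroup $P$ of $G$ is normal, then Lemma \ref{lem:normalsylow} identifies $k_{0,\sigma}(B_0(G))$ with $k(G/\Phi(P))$, and since $G/\Phi(P)$ has nontrivial elementary abelian normal Sylow $P/\Phi(P)$, the main result of Section \ref{sec:conjugacyclasses}---namely, that $k(H)=\lceil 2\sqrt{p-1}\rceil$ for a group $H$ with nontrivial elementary abelian Sylow forces $|H|_p=p$---immediately yields $|P/\Phi(P)|=p$, so that $P$ is cyclic.

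Assuming $P$ is not normal in $G$, I would carry through the remaining reductions of the proof of Theorem \ref{thm:lowerbound} to arrive at the case in which $G$ has a unique minimal normal subgroup $N\cong S^t$ with $p\mid|N|$ and $p\nmid|G:N|$; the latter condition forces the Sylow $p$-subgroup of $G$ to coincide with that of $N$. The subcase where $S$ is abelian would give $N=P$ normal in $G$, contradicting the present assumption. For $S$ nonabelian, the simple-group estimates provided by Theorem \ref{thm:simple groups}(ii)(a),(b) and (iii), combined with the orbit bound $\binom{k+t-1}{t}\geq\frac{k(k+1)}{2}\geq 2\sqrt{p-1}+(p-1)^{1/4}$ from the last paragraph of the proof of Theorem \ref{thm:lowerbound}, yield the \emph{strict} inequality $k_{0,\sigma}(B_0(G))>\lceil 2\sqrt{p-1}\rceil$ in every subcase except $t=1$ with $S$ having cyclic Sylow; in that remaining scenario $P$ equals the cyclic Sylow of $S$, completing the argument.

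The main obstacle lies in the reduction to ``unique minimal normal subgroup $N$ with $p\nmid|G:N|$''. In the proof of Theorem \ref{thm:lowerbound} a proper nontrivial $M\normal G$ with $p\mid|G:M|$ is dispatched by invoking induction on $G/M$ to deduce the inequality for $G$, but here induction on $G/M$ merely gives cyclic Sylow for $G/M$, which does not immediately give cyclic Sylow of $G$. I would close this gap by ruling out such $M$ under the equality hypothesis: using Theorem \ref{thm:simple groups}(i) together with a suitable adaptation of Lemma \ref{lem:G=PN}, one produces a $\sigma$-invariant $p'$-degree character in $B_0(G)$ whose kernel does not contain $M$, contributing an extra member to $\Irr_{p',\sigma}(B_0(G))$ beyond the $\lceil 2\sqrt{p-1}\rceil$ characters inflated from $B_0(G/M)$ via Lemma \ref{lem:princblockabove}(i), hence forcing the contradiction $k_{0,\sigma}(B_0(G))>\lceil 2\sqrt{p-1}\rceil$. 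The analogous obstruction posed by multiple minimal normal subgroups is resolved by a direct-product bound via Lemma \ref{lem:direct products} and Corollary \ref{cor:p'abelianquotsigmafixed}, which together with Theorem \ref{thm:lowerbound} applied to each factor produce at least $\lceil 2\sqrt{p-1}\rceil^2$ characters in $\Irr_{p',\sigma}(B_0(G))$. Carrying out this character-theoretic accounting while tracking $\sigma$-invariance and $p'$-degrees through Clifford theory---especially when $p\mid|G:M|$---is expected to be the bulk of the technical work.
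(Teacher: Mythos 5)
Your strategy mirrors the paper's, and you correctly identify the crux: when a minimal normal subgroup $N$ satisfies $p\mid|G:N|$, induction on $G/N$ gives only cyclic Sylow for $G/N$, so one must instead exhibit a member of $\Irr_{p',\sigma}(B_0(G))$ whose kernel does not contain $N$. However, your proposed construction via Theorem~\ref{thm:simple groups}(i) and Lemma~\ref{lem:G=PN} applies only when $N$ is a direct power of a nonabelian simple group; a minimal normal subgroup can instead be an elementary abelian $p$-group (the reduction to $\oh{p'}{G}=1$ does not rule this out), and there your machinery has nothing to say. This subcase requires a genuinely different argument: since $P/N$ is cyclic and nontrivial while $P$ is not cyclic, $N\not\subseteq\Phi(P)$, so there is a $\sigma$-invariant linear $\lambda\in\Irr(P/\Phi(P))$ with $N\not\subseteq\ker\lambda$; writing $P\cent{G}{P}=P\times X$ one takes $\hat\lambda=\lambda\times 1_X$ and pushes this up to $\Irr_{p',\sigma}(B_0(G))$ via Lemma~\ref{lem:Cargument}, giving the contradiction. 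Your proof needs this branch. And even in the semisimple case, passing from the character of $PN$ produced by Lemma~\ref{lem:G=PN} to one of $G$ requires the Alperin--Dade correspondence (Lemma~\ref{lem:alpdade}) followed by Lemma~\ref{lem:Cargument}; ``a suitable adaptation of Lemma~\ref{lem:G=PN}'' quietly elides these steps.

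Two smaller points. Theorem~\ref{thm:simple groups} is stated only for $p\geq 5$, so you must dispose of $p\leq 3$ separately before invoking it; for $p\leq 3$ one has $\lceil 2\sqrt{p-1}\rceil=p$, and $k_{0,\sigma}(B_0(G))=p$ forces cyclic Sylow by the main result of \cite{Riz-Sch-Val20}. Also, your final paragraph on multiple minimal normal subgroups is unnecessary: once you know every minimal normal $N$ satisfies $p\mid|N|$ and $p\nmid|G:N|$, uniqueness is automatic, since two distinct minimal normals $N_1,N_2$ would give $|N_1N_2|_p=|N_1|_p|N_2|_p>|N_1|_p=|G|_p$, impossible. (The claimed $\lceil 2\sqrt{p-1}\rceil^2$ bound is in any case not justified as stated, since $G$ may permute the $N_i$ and each factor need not contribute that many $\sigma$-fixed $p'$-degree characters.)
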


\begin{proof} Let $G$ be a minimal counterexample to the statement. In particular $P \in \syl p G$ is not cyclic. 

\medskip

\textit{Step 1: We have $p\geq 5$.}

\medskip

If $p\leq 3$, then $\lceil 2\sqrt{p-1} \rceil=p$, so $k_{0, \sigma}(B_0(G))=\lceil 2\sqrt{p-1}\rceil$ implies that $P$ is cyclic by the main result of \cite{Riz-Sch-Val20}, contradicting the choice of $G$ as a minimal counterexample.

\medskip

\textit{Step 2: If $N$ is a minimal normal subgroup of $G$, then $p$ divides $|N|$ and $p$ does not divide $|G:N|$.}

\medskip

Assume otherwise. If $p\nmid |N|$ then $\Irr(B_0(G))=\Irr(B_0(G/N))$ by Lemma \ref{lem:princblockabove}(ii). By the minimality of $G$ we have that $G/N$ has cyclic Sylow $p$-subgroups, then so does $G$, a contradiction.

If $p$ divides $|G:N|$ then by Theorem \ref{thm:lowerbound} we have $$\lceil 2\sqrt{p-1}\rceil \leq \Irrg{B_0(G/N)}\leq \Irrg{B_0(G)}=\lceil 2\sqrt{p-1}\rceil.$$ Thus $G/N$ has cyclic Sylow $p$-subgroups and every $p'$-degree almost $p$-rational character of $B_0(G)$ lies over $1_N$ (and in the principal block of $G/N$).

Suppose that $N$ is a $p$-group. Let $P\in\Syl_p(G)$ and notice that $P$ is not cyclic but $P/N$ is cyclic and nontrivial. We have $N\cap\Phi(P)<\Phi(P)$. Let $\lambda\in\Irr(P/\Phi(P))$ be such that $N$ is not contained in $\ker\lambda$. Then $\lambda$ is $\sigma$-invariant and linear. Write $P\cent G P=P\times X$ and $\hat\lambda=\lambda\times 1_X\in\Irrg{B_0(P\cent G P)}$. By Lemma \ref{lem:Cargument} there is some $\chi\in\Irr _{p',\sigma}(B_0(G))$ contained in $\hat\lambda^G$. By Frobenius reciprocity, $\chi_{P\cent G P}$ contains $\hat\lambda$ so $N$ is not contained in $\ker\chi$, a contradiction. 

Therefore, we may assume that $N$ is a direct product of copies of a nonabelian simple group $S$ of order divisible by $p$. Write $N=S_1\times\dots\times S_t$, where $S_i\cong S$ for every $i$. By Theorem \ref{thm:lowerbound} and the hypothesis that $k_{0,\sigma}(B_0(G))=\lceil 2\sqrt{p-1}\rceil$, we have that $$k_{0, \sigma}(B_0(G/N))=\lceil 2\sqrt{p-1} \rceil$$ so it suffices to show that there is some $\chi\in\Irrg{B_0(G)}$ such that $N$ is not contained in $\ker\chi$. 

Write $M=PN$ and $H=M\cent G P$. By Theorem \ref{thm:simple groups}(i) and Lemma \ref{lem:G=PN} there is some $\psi\in\Irrg{B_0(M)}$ not containing $N$ in its kernel. By Alperin--Dade (Lemma \ref{lem:alpdade}), there is some extension $\tilde\psi\in\Irr(B_0(H))$ of $\psi$, and by Lemma \ref{lem:p'abelianquotsigmafixed} we have $\tilde\psi\in\Irrg{B_0(H)}$. By Lemma \ref{lem:Cargument}, the induced character $(\tilde\psi)^G$ contains some $\chi\in\Irrg{B_0(G)}$, and since $\chi_N$ contains $\theta$, $N$ is not contained in $\ker{\chi}$. This contradicts our assumption, so we have $p\nmid|G:N|$.

\medskip

\textit{Final step.} Let $N$ be a minimal normal subgroup of $G$.
By Step 2 we have $|G:N|$ is not divisible by $p$. If $N$ is a $p$-group, then $G$ has a normal Sylow $p$-subgroup and by Lemma \ref{lem:normalsylow} $k_{0,\sigma}(B_0(G))=k(G/\Phi(N)\oh{p'}G)$ and it follows from Theorem \ref{rank2} that $G$ is not a counterexample. 
Thus, we have that $N$ is semisimple, and $N$ is the only minimal normal subgroup of $G$. Furthermore, by using Lemma \ref{lem:p'abelianquotsigmafixed}, whenever $\theta\in\Irrg{B_0(N)}$ there is some $\chi\in\Irrg{B_0(G)}$ lying over $\theta$.
 If $N=S_1\times\dots\times S_t$ with $t\geq 2$, then arguing as in Theorem \ref{thm:lowerbound} we obtain that $k_{0, \sigma}(B_0(G))>\lceil2\sqrt{p-1}\rceil$, so we may assume $N$ is simple and $G$ is almost simple. If $N$ has cyclic Sylow $p$-subgroups, then so does $G$, contradicting the choice of $G$ as a counterexample. 
 Otherwise, Theorem \ref{thm:simple groups}(ii) shows $G$ is not a counterexample.
\end{proof}

\section{Final comments}\label{sec:final}

\subsection{Related problems}

Fix a prime number $p$ and write $$\mathcal S_p =\{ e+ \frac {p-1} e \ | \ e \text{ divides } p-1 \}.$$ In \cite{Hun-Mal-Mar22}, Hung, Malle and the first-named author of this article ask the following question. 
\begin{que}[Question 1.5 of \cite{Hun-Mal-Mar22}]
Is it true that the Sylow $p$-subgroups of a group $G$ of order divisible by $p$ are cyclic if 
$k_{0, \sigma}(B_0(G))\in \mathcal S _p$?
\end{que}

Notice that when $p-1$ is a perfect square, then $\lceil 2 \sqrt{p-1}\rceil=2\sqrt{p-1}= {\rm min}(\mathcal S_p)$. Then our main theorem provides a partial positive answer to the above question. It is still unknown whether there are finitely many primes $p$ satisfying  that $p-1$ is a perfect square. In fact, this is one of the 4 problems on prime numbers that Landau presented in his talk at the International Congress of Mathematicians in 1912.

In \cite{Cin-Kel23} the authors put forward the following conjecture.

\begin{conj}[C{\i}narc{\i}--Keller] Let $p$ be a prime. Let $a$ and $b$ be positive integers such
that $p -1 = ab$ and such that $|a - b|$ is minimal.
Then, for any finite group $G$ of order divisible by $p$, we have $k(G)\geq a+b$ and
 $k(G)=a+b$ if, and only if, $G\cong \mathsf{C}_p\rtimes \mathsf{C}_a$ or $\mathsf{C}_p\rtimes \mathsf{C}_b$.
 \end{conj}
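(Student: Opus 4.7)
The plan is to extend the argument pattern of Theorem A in Section \ref{sec:main} and the equality analysis in Section \ref{sec:conjugacyclasses} from almost $p$-rational $p'$-degree characters in $B_0(G)$ to all irreducible characters of $G$, now targeting the strictly stronger bound $a+b=\min_{e\mid p-1}(e+(p-1)/e)$ rather than $\lceil 2\sqrt{p-1}\rceil$. I would induct on $|G|$: using the inequality $k(G)\geq k(G/M)$ for any normal subgroup $M$ together with the standard minimal normal subgroup analysis, one reduces to the case in which $\bfO_{p'}(G)=1$, $G$ has a unique minimal normal subgroup $N$ with $p\mid|N|$, and $p\nmid|G:N|$. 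Thus $N$ is either an elementary abelian $p$-group $C_p^d$ or a direct product $S^t$ of isomorphic nonabelian simple groups of order divisible by $p$.

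In the nonabelian semisimple case, the techniques used to prove Theorem \ref{thm:simple groups}(ii) and (iii) (which already produce many orbits of almost $p$-rational $p'$-degree characters in $B_0(G)$), combined with standard lower bounds for $k(S)$ when $S$ is simple (e.g.\ Liebeck--Pyber-type bounds for groups of Lie type), should easily yield $k(G)>a+b$, ruling out equality. So assume $N\cong C_p^d$ and set $H:=G/N$, which acts faithfully and irreducibly on $N$ with $p\nmid|H|$. When $d=1$, $H$ embeds into $\Aut(C_p)\cong C_{p-1}$, forcing $H$ to be cyclic of some order $e\mid p-1$ and $G\cong C_p\rtimes C_e$, whose class number is exactly $e+(p-1)/e\geq a+b$, with equality precisely when $e\in\{a,b\}$. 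This simultaneously produces the Frobenius groups in the conclusion and shows that any equality case must arise from $d=1$ with $e\in\{a,b\}$.

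When $d\geq 2$ the target is the strict inequality $k(G)>a+b$. The Clifford--Gallagher inequality $k(G)\geq k(H)+n(H,N)-1$ reduces this to producing enough orbits of $H$ on $N$ together with enough classes in $H$. Theorem \ref{orbit} and Theorem \ref{main3} of Section \ref{sec:conjugacyclasses}, together with the H\'ethelyi--K\"ulshammer bound $k(G)\geq(49p+1)/60$ for solvable $G$ with $p^2\mid|G|$, should cover all but a short list of tightly controlled configurations (small $p$, or $H$ contained in $\Gamma\mathrm{L}(1,p^d)$, or one of the finite exceptional $H$ appearing in Theorem \ref{orbit}). These remaining configurations would have to be dispatched by sharpening the orbit estimates of Seager and P\'alfy--Pyber (Propositions \ref{Seager} and \ref{PP}) and the $|G|$ versus $|V|$ bound of Lemma \ref{abelian}, so as to rule out $k(G)=a+b$.

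The main obstacle is expected to be the $d\geq 2$ regime when $p-1$ has very few divisors, so that $a+b$ is of order $p$ (for instance when $p=2q+1$ with $q$ prime, giving $a+b=2+(p-1)/2$): here Theorem \ref{rank2} only yields $k(G)\geq 2\sqrt{p-1}+1$, which is far from $a+b$. One route is to refine the $k(GV)$-theorem type analysis to force at least $a+b+1$ classes on $H\ltimes N$ under the assumption that $H$ avoids the cyclic Frobenius-like structure of $C_p\rtimes C_a$. A cleaner but conditional alternative is to assume the Alperin--McKay--Navarro conjecture: that would transfer the counting to $\norm GP$, where $\norm GP/\Phi(P)\bfO_{p'}(\norm GP)$ is a semidirect product of an elementary abelian $p$-group by a faithful $p'$-group, reducing the whole problem to Mar\'oti's inequality \cite{Mar16} and the equality analysis already carried out in Section \ref{sec:conjugacyclasses}.
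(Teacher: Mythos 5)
This statement is not a theorem of the paper: it is an \emph{open conjecture} of C{\i}narc{\i} and Keller, cited from \cite{Cin-Kel23} and stated in Section \ref{sec:final} merely to contextualize Theorem \ref{rank2}. The paper explicitly says only that Theorem \ref{rank2} ``provides evidence for the C{\i}narc{\i}--Keller conjecture in the case where $\lceil2\sqrt{p-1}\rceil = \min \mathcal S_p$,'' i.e.\ precisely in the (rare) regime where the ceiling bound actually coincides with $a+b$. There is no proof of this conjecture in the paper, nor does the paper claim there is one.

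Turning to the substance of your attempt: you have, to your credit, put your finger on the genuine obstruction, but neither of your two proposed ways around it actually works. The problem is that $a+b=\min\mathcal S_p$ can be of order $p$ (e.g.\ $p=2q+1$ with $q$ prime gives $a+b=2+(p-1)/2$), while every quantitative input available from Section \ref{sec:conjugacyclasses} --- Mar\'oti's original inequality $k(G)\geq 2\sqrt{p-1}$, its refinement $k(G)\geq 2\sqrt{p-1}+1$ in Theorem \ref{rank2}, the Seager bound in Proposition \ref{Seager}, the P\'alfy--Pyber bound in Proposition \ref{PP}, and the H\'ethelyi--K\"ulshammer bound $(49p+1)/60$ --- is of order $\sqrt p$ or at best a constant times $p$ with a constant strictly less than $1/2$. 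None of these can be ``sharpened'' by the kind of bookkeeping you describe to reach $a+b$ in the rank $d\geq 2$ case; what is needed is an argument of a completely different strength, and the paper offers none. Your ``cleaner but conditional alternative'' via Alperin--McKay--Navarro fares no better: assuming AMN for $B_0(G)$ transfers the count to $k(\norm GP/\Phi(P)\bfO_{p'}(\norm GP))$, which is a group with a normal elementary abelian Sylow $p$-subgroup, and at that point you still need the C{\i}narc{\i}--Keller lower bound $a+b$ \emph{for that group}. You cannot then invoke \cite{Mar16} or Theorem \ref{rank2}, because they only give $2\sqrt{p-1}$ (resp.\ $2\sqrt{p-1}+1$), not $a+b$; you would be reducing the conjecture to a special case of itself that is still open. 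In short, there is a real gap in the $d\geq 2$ regime, you have identified it correctly, but both proposed remedies are circular or quantitatively insufficient, and the paper itself does not claim to resolve it.
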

 
 It is straightforward to see that $a+b$ is precisely the minimum of $\mathcal{S}_p$, where $\mathcal S_p$ is defined as above. Notice that $\lceil2\sqrt{p-1}\rceil \leq \min \mathcal S_p$, so our Theorem  \ref{rank2} provides evidence for the C{\i}narc{\i}--Keller conjecture in the case where $\lceil2\sqrt{p-1}\rceil = \min \mathcal S_p$ or, equivalently, when $\lceil2\sqrt{p-1}\rceil \in \mathcal S_p$. This happens significantly more often than $\sqrt{p-1}\in\mathbb{Z}$ (a quick check in \cite{GAP} shows that $\lceil2\sqrt{p-1}\rceil\in\mathcal{S}_p$ for $77$ out of the $168$ primes smaller than $1000$, whereas there are only $10$ primes smaller than $1000$ with $\sqrt{p-1}\in\mathbb{Z}$).
 
\subsection{Arbitrary blocks}\label{sec:AMN}

Recall that the H\'ethelyi-K\"ulshammer conjecture predicts that 
$$k(B)\geq 2\sqrt{p-1}$$
whenever $B$ is a $p$-block with nontrivial defect of some finite group $G$.
The inequality holds for $p\leq 3$ by the weak block orthogonality relation. As already mentioned in the Introduction, it was recently settled for principal blocks in \cite{Hun-Sch23}. 
However, outside these cases, little is known about this conjecture. In particular, it is not known whether it follows from the Alperin-McKay conjecture nor whether it holds for solvable groups.

 The following problem appeared recently in \cite{Nav23}.

\begin{prob}[Problem 2.5 of \cite{Nav23}]\label{prob:am implies hk} 
Let $V$ be a finite dimensional $\mathbb{F}_pH$-module, where $H$ is a finite $p'$-group and $\mathbb{F}_p$ is the field of $p$ elements. Let $G=VH$ be the semidirect product. Let $Z=\oh{p'}G=\cent H V$ and assume $Z\sbs \zent G$. Let $\lambda\in\Irr(Z)$. Then $|\Irr(G|\lambda)|\geq2\sqrt{p-1}$.
\end{prob}

As mentioned before its statement in \cite{Nav23}, if Problem \ref{prob:am implies hk} has a positive answer, then the blocks $B$ satisfying the Alperin--McKay conjecture also satisfy the H\' ethelyi--K\"ulshammer conjecture. Recall that the Alperin-McKay conjecture holds for a block $B$ if $k_0(B)=k_0(b)$, where $b$ is the Brauer first main correspondent of $B$.
If instead we assume that the block $B$ satisfies the Alperin--McKay--Navarro conjecture \cite[Conjecture]{Nav04}, then $B$ would satisfy the blockwise version of our Theorem \ref{thm:maintheorem}. We provide a proof of this fact in Proposition \ref{prop:AMN} below, but let us first recall the statement of the Alperin--McKay--Navarro conjecture.

For a fixed prime $p$, let $\mathcal{H}\leq \Gal(\mathbb{Q}_{|G|}/\mathbb{Q})$ be the subgroup generated by the field automorphisms $\tau$ which send $p'$-roots of unity $\xi$ to $\xi^{p^e}$ for some integer $e$. For a $p$-block $B$ we let $\Irr_0(B)$ denote the set of characters of $B$ with height zero. The group $\mathcal H$ acts on the set $\{ \irr B \ | \ B \text{ $p$-block of $G$}\}$, and we denote by $\mathcal H_B$ the stabilizer of $\irr B$ under the action of $\mathcal H$.

\begin{conj}[Alperin--McKay--Navarro]\label{conj:AMN}
Let $B$ be a $p$-block of $G$ with defect group $D$ and let $b$ be its Brauer correspondent block in $\norm G D$. Then there is an $\mathcal{H}_B$-equivariant bijection
$\Irr_0(B)\rightarrow\Irr_0(b)$.
\end{conj}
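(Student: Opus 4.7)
Conjecture \ref{conj:AMN} is the Alperin--McKay--Navarro conjecture of \cite{Nav04}, and as emphasized in the paragraph preceding its statement it has not been reduced to a problem on finite simple groups; accordingly, I cannot realistically propose a complete proof. What I will instead sketch is a plan of attack that mirrors the strategy that has led to (partial) progress on the ordinary Alperin--McKay conjecture, with the additional book-keeping needed to keep the Galois group $\mathcal{H}$ in the picture.

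The first step would be to formulate an \emph{inductive AMN condition} for finite quasi-simple groups, in the spirit of Sp\"ath's inductive AM condition in \cite{Spa13}. Namely, for a quasi-simple group $X$ with $A=\Aut(X)$, and for a block $B$ of $X$ with defect group $D$ and Brauer correspondent $b$ in $\norm X D$, one should demand the existence of an $\bigl(\mathcal{H}_B\times\norm A D\bigr)$-equivariant bijection
$$\Omega\colon\Irr_0(B)\longrightarrow\Irr_0(b),$$
together with block isomorphisms of character triples $(A_\chi,X,\chi)\sim_b(\norm A D_\chi,\norm X D,\Omega(\chi))$ in the sense of \cite{Nav-Spa14}, enhanced so that they commute with $\mathcal{H}_B$. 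The second step is a reduction theorem: if every finite simple group involved in a minimal counterexample $G$ satisfies this inductive condition, then AMN holds for every $p$-block of $G$. This should follow Sp\"ath's blueprint via Clifford theory for blocks, the Fong--Reynolds correspondence (which is automatically $\mathcal{H}$-equivariant), and the behaviour of $\sim_b$ under central products and wreath products; the novelty with respect to the classical AM reduction is the need to preserve $\mathcal{H}$-equivariance through each Clifford-theoretic step, using devices in the spirit of our Lemmas \ref{lem:p'abelianquotsigmafixed} and \ref{lem:cyclic bijection galois}.

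The final step is to verify the inductive AMN condition for every finite simple group by means of the Classification. For simple groups with cyclic defect the Koshitani--Sp\"ath bijection is already $\langle\sigma\rangle$-equivariant by our Lemma \ref{lem:cyclic bijection galois}, and the same star-product identity from Lemma \ref{lem:star formula} should extend this to full $\mathcal{H}_B$-equivariance essentially verbatim; sporadic, Tits, and alternating groups can then be treated by direct character-theoretic computation together with standard lifts to covers. The main obstacle, and the reason AMN is genuinely harder than plain AM, is the case of simple groups of Lie type with non-cyclic defect in non-defining characteristic: here one has to follow the action of $\mathcal{H}$ through Jordan decomposition, Lusztig series, and $d$-Harish-Chandra theory, and show that the Alperin--McKay-style bijections on unipotent and semisimple blocks can be arranged to be simultaneously Galois-equivariant. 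It is precisely this technical point that the present article bypasses, by restricting to the principal block where the defect group is a full Sylow $p$-subgroup and the Galois analysis collapses to the more tractable computations of Sections \ref{sec:simples}--\ref{sec:main}.
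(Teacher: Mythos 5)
You have correctly identified that Conjecture \ref{conj:AMN} is stated in the paper precisely as a \emph{conjecture}: the article offers no proof and explicitly notes (in the Introduction and in Section \ref{sec:AMN}) that AMN has not even been reduced to a problem on finite simple groups. There is therefore no ``paper's own proof'' to compare yours against, and declining to fabricate one is the right move. Your sketch of an inductive-AMN condition, a Sp\"ath-style reduction, and a CFSG-by-CFSG verification is a reasonable summary of the expected strategy and is consistent with how the paper uses partial AMN-type input (the Koshitani--Sp\"ath bijection, $\sim_b$ triple isomorphisms, and the $\langle\sigma\rangle$-equivariance established in Lemma \ref{lem:cyclic bijection galois} and Proposition \ref{pro:triple isomorphism}). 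One small caution: your assertion that the Fong--Reynolds correspondence is ``automatically $\mathcal{H}$-equivariant'' is not entirely free of charge; the paper is careful to establish Galois-equivariance of each ingredient rather than assume it, and in the general AMN reduction the compatibility of Clifford-theoretic correspondences with the full group $\mathcal{H}$ (not just $\langle\sigma\rangle$) is one of the genuine technical obstacles, as you yourself flag at the end.
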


Notice that our field automorphism $\sigma$ from the introduction is an element of $\mathcal{H}$. 
Moreover the action of $\langle \sigma \rangle$ stabilizes $\irr B$ for every $p$-block $B$ (because Brauer characters are $\sigma$-invariant).
In particular, Conjecture \ref{conj:AMN} predicts that the action of $\langle \sigma\rangle$ on characters fixes the same number of height zero characters in $B$ as it does in its Brauer correspondent block $b$, that is
$$k_{0, \sigma}(B)=k_{0, \sigma}(b)\, .$$

\begin{pro}\label{prop:AMN}
Let $G$ be a finite group of order divisible by $p$, $B$ a $p$-block of $G$ with nontrivial defect group $D$. Assume Problem \ref{prob:am implies hk} has a positive answer for all finite groups and that the Alperin--McKay--Navarro conjecture holds for $B$. Then $$k_{0,\sigma}(B)\geq 2\sqrt{p-1}\, .$$
\end{pro}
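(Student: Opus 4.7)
The plan is to combine the Alperin--McKay--Navarro conjecture for $B$ with a block-theoretic reduction to the hypotheses of Problem \ref{prob:am implies hk}. The argument will proceed in three stages.

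First, I would apply the hypothesized AMN conjecture to $B$. Because $\sigma \in \mathcal{H}$ and Brauer characters are $\sigma$-invariant, $\sigma \in \mathcal{H}_B$. Conjecture \ref{conj:AMN} then yields a $\langle \sigma \rangle$-equivariant bijection $\Irr_0(B) \to \Irr_0(b)$, where $b$ denotes the Brauer correspondent of $B$ in $N := N_G(D)$. Hence $k_{0,\sigma}(B) = k_{0,\sigma}(b)$, so it is enough to bound $k_{0,\sigma}(b)$, with the advantage that $b$ has $D$ as a normal defect group in $N$.

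Second, I would reduce the computation of $k_{0,\sigma}(b)$ to the setting of Problem \ref{prob:am implies hk}. Applying the $\sigma$-equivariant Fong--Reynolds correspondence \cite[Theorem 9.14]{Nav98}, $b$ is replaced by its Fong--Reynolds correspondent $b_0$ in the inertia subgroup $T$ of some character $\mu \in \Irr(O_{p'}(N))$ covered by $b$; this preserves $k_0$ and the $\sigma$-action. A $\sigma$-equivariant block character triple isomorphism of type $\sim_b$, in the spirit of \cite{Nav-Spa14}, then replaces $(T, O_{p'}(T), \mu)$ by $(T^*, L^*, \lambda)$ where $L^* \subseteq Z(T^*)$ is a cyclic central $p'$-subgroup and $\lambda$ is faithful. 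The resulting block $b^*$ of $T^*$ still has defect group $D$. Combined with a further passage to the quotient $G' := T^*/\Phi(D)$ in the spirit of Lemma \ref{lem:normalsylow}, we arrive at a group $G' = V \rtimes H$ with $V := D/\Phi(D)$ elementary abelian, $H$ a $p'$-complement, and $L^* = C_H(V) \subseteq Z(G')$, placing us in the configuration of Problem \ref{prob:am implies hk} for the pair $(G', \lambda)$. Tracking $\sigma$-equivariance throughout, one obtains
\[
k_{0,\sigma}(b) \;=\; |\{\chi \in \Irr(G' \mid \lambda) : \chi^\sigma = \chi\}|.
\]

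Third, I would show that every $\chi \in \Irr(G' \mid \lambda)$ is automatically $\sigma$-fixed, so the right-hand side above equals $|\Irr(G' \mid \lambda)|$. Since $L^*$ is a $p'$-group, $\lambda^\sigma = \lambda$; since $V$ is elementary abelian of exponent $p$, every $\mu \in \Irr(V)$ satisfies $\mu^\sigma = \mu^{1+p} = \mu$. By Clifford theory applied to $V \triangleleft G'$ and the canonical extension theorem for coprime index (\cite[Corollaries 6.2 and 6.4]{Nav18}), $\chi$ has the form $(\hat\mu\tau)^{G'}$ where $\hat\mu$ is the canonical (hence $\sigma$-invariant) extension of $\mu$ to its inertia subgroup $G'_\mu$ and $\tau \in \Irr(G'_\mu/V)$ takes $p'$-values, as $G'_\mu/V$ is a $p'$-group. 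Thus $\chi^\sigma = \chi$, and Problem \ref{prob:am implies hk} (assumed resolved) yields $|\Irr(G' \mid \lambda)| \geq 2\sqrt{p-1}$. Combined with the first stage, this gives $k_{0,\sigma}(B) \geq 2\sqrt{p-1}$.

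The main obstacle is the second stage: assembling the reduction chain from the Brauer correspondent $b$ to a block of a group in the Problem \ref{prob:am implies hk} configuration while carefully tracking the $\sigma$-action through Fong--Reynolds, the block character triple isomorphism of \cite{Nav-Spa14}, and the passage to $T^*/\Phi(D)$. In particular, verifying the central condition $C_H(V) \subseteq Z(G')$ in the final quotient is delicate and is likely to require the full $\sim_b$ framework, in the same spirit as the cyclic-defect bookkeeping carried out in Section \ref{sec:cyclicSylow}.
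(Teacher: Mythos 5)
Your first stage matches the paper exactly, and your overall strategy (AMN reduces to the Brauer correspondent $b$ with normal defect group, then reduce to the Problem~\ref{prob:am implies hk} configuration) is the right one. However, the paper takes a significantly cleaner route through the middle of the argument, one that dissolves precisely the obstacle you flag at the end.

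The key move in the paper's proof is to invoke \cite[Lemma 2.2]{Val23} \emph{immediately after} the AMN reduction: that lemma states that for a block $b$ with normal defect group $D$, one has $k_{0,\sigma}(b) = k(\overline{b})$, where $\overline{b}$ is the appropriate block of $\norm{G}{D}/\Phi(D)$ with defect group $D/\Phi(D)$. This single step converts the count of $\sigma$-fixed height-zero characters into a count of \emph{all} irreducible characters of a related block, after which $\sigma$ disappears entirely from the argument. The remaining reductions — Reynolds' theorem \cite[Theorem 6]{Rey63} to pass to a group $K$ with normal Sylow $p$-subgroup $V \cong D/\Phi(D)$, Fong's theorem \cite[Theorem 10.20]{Nav98} to identify $\Irr(b')$ with $\Irr(K\mid\lambda)$, and the character triple isomorphism \cite[Problem 8.13]{Nav98} to centralize $\oh{p'}{K}$ — are then applied without any need for $\sigma$-equivariance, which is why they go through so easily. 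Your proposal inverts this order: you attempt the Fong--Reynolds, character-triple, and Frattini-quotient steps first, insisting on $\sigma$-equivariance at each stage, and only at the very end observe that all characters over $\lambda$ in the final configuration are automatically $\sigma$-fixed (your stage three, which is essentially a re-derivation of the content packaged inside \cite[Lemma 2.2]{Val23}). This is why you correctly identify stage two as the main obstacle: tracking $\sigma$-equivariance through a $\sim_b$ chain in the style of \cite{Nav-Spa14} is genuinely delicate, and the citation you offer for the Frattini-quotient step, Lemma~\ref{lem:normalsylow}, applies only to principal blocks, so it does not cover the general case you need. The lesson is that the $\sigma$-bookkeeping should be discharged \emph{once}, up front, via \cite[Lemma 2.2]{Val23}, rather than carried through the entire reduction chain.
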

\begin{proof}
Let $b$ be the Brauer correspondent block of $B$ in $\norm G D$. We have that $k_{0,\sigma}(B)=k_{0,\sigma}(b)$ by the Alperin--McKay--Navarro conjecture. By \cite[Lemma 2.2]{Val23} we have that
$$k_{0, \sigma}(b)=k(\overline b)$$
where $\overline b$ is a block of $\norm G D/\Phi(D)$ with defect group $D/\Phi(D)$. 
By \cite[Theorem 6]{Rey63} applied to $\overline{b}$ there is a block $b'$ of a finite group $K$ with normal defect group $V\in\Syl_p(K)$, 
$V\cong D/\Phi(D)$ and with a height-preserving bijection $\Irr(\overline{b})\rightarrow\Irr(b')$. It follows that
$k_{0,\sigma}(B)=k(b')$. By Schur--Zassenhaus, the group $V$ has a complement $H$ in $K$, so $K=VH$ with $V$ an $\mathbb{F}_pH$-module.
By Fong's theorem \cite[Theorem 10.20]{Nav98}, there is some $K$-invariant $\lambda\in\Irr(\oh{p'}{K})$ with $\Irr(b')=\Irr(K|\lambda)$. 
By character triple isomorphisms \cite[Problem 8.13]{Nav98} we may assume $\oh{p'}K\sbs\zent K$. Using the assumption that Problem \ref{prob:am implies hk} has a positive answer, we have
$$k_{0,\sigma}(B)=k(b')=|\Irr(K|\lambda)|\geq2\sqrt{p-1}$$
as desired.
\end{proof}

The version for principal blocks of Problem \ref{prob:am implies hk} is the case where $Z=1$, and a positive answer in this case is guaranteed by \cite{Mar16}. Arguing as above, we get that if the Alperin--McKay--Navarro conjecture holds for $B_0(G)$ then $$k_{0,\sigma}(B_0(G))=k(\norm G P/\Phi(P)\oh{p'}{\norm G P})$$ and the equality part of Theorem \ref{thm:maintheorem} follows from Theorem \ref{rank2}. 
 
 \smallskip
 
We close our comments with a last observation. Notice that for every group $G$ and every prime $p$ we have that $k_{0,\sigma}(B_0(G))\geq 1$ because the principal $p$-block contains the trivial character $1_G$. However, the existence of almost $p$-rational characters of height zero in arbitrary $p$-blocks had not been observed until quite recently (see \cite{Nav-Riz}). Now that we know that $k_{0,\sigma}(B)\geq 1$ for arbitrary blocks and primes, we can guarantee that the lower bound
$$k_{0, \sigma}(B)\geq 2\sqrt{p-1}$$
holds whenever $p\leq 3$ by a direct application of \cite[Lemma 1.4]{Riz-Sch-Val20}.

\bibliographystyle{alpha}

\begin{thebibliography}{GRSS20}

\bibitem[Alp76]{alperin76}
J.~L. Alperin.
\newblock Isomorphic blocks.
\newblock {\em J. Algebra}, 43(2):694--698, 1976.

\bibitem[BC72]{bensoncurtis}
C.~T. Benson and C.~W. Curtis.
\newblock On the degrees and rationality of certain characters of finite
  {C}hevalley groups.
\newblock {\em Trans. Amer. Math. Soc.}, 165:251--273, 1972.



\bibitem[BMM93]{BMM}
M.~Brou{\'e}, G.~Malle, and J.~Michel.
\newblock Generic blocks of finite reductive groups.
\newblock {\em Ast{\'e}risque}, 212:7--92, 1993.

\bibitem[BP80]{Bro-Pui80}
M.~Brou{\'e} and L.~Puig.
\newblock Characters and local structure in {$G$}-algebras.
\newblock {\em J. Algebra}, 63(2):306--317, 1980.



\bibitem[Car93]{carter}
R.~W. Carter.
\newblock {\em Finite groups of {L}ie type: conjugacy classes and complex
  characters}.
\newblock Wiley Classics Library. John Wiley \& Sons Ltd., Chichester, 1993.

\bibitem[CK23]{Cin-Kel23}
B.~C{\i}narc{\i} and T.~M. Keller.
\newblock A new lower bound for the number of conjugacy classes.
\newblock{\em preprint}, 2023. \href{https://arxiv.org/abs/2310.09459}{\texttt{https://arxiv.org/abs/2310.09459}}

\bibitem[Cur75]{BensonCurtisCorrection}
C.~W. Curtis.
\newblock Corrections and additions to: ``{O}n the degrees and rationality of
  certain characters of finite {C}hevalley groups'' ({T}rans. {A}mer. {M}ath.
  {S}oc. {\bf 165}\ (1972), 251--273) by {C}. {T}. {B}enson and {C}urtis.
\newblock {\em Trans. Amer. Math. Soc.}, 202:405--406, 1975.

\bibitem[Dad66]{Dad66}
E.~C. Dade.
\newblock Blocks with cyclic defect groups.
\newblock {\em Ann. of Math. (2)}, 84:20--48, 1966.

\bibitem[Dad77]{dade77}
E.~C. Dade.
\newblock Remarks on isomorphic blocks.
\newblock {\em J. Algebra}, 45(1):254--258, 1977.

\bibitem[Eng00]{Eng00}
M.~Enguehard.
\newblock Sur les $l$-blocs unipotents des groupes réductifs finis quand $l$ est
  mauvais.
\newblock {\em J. Algebra}, 230(2):334--377, 2000.

\bibitem[Fou69]{Fou69}
D.~A. Foulser.
\newblock Solvable primitive permutation groups of low rank.
\newblock {\em Trans. Amer. Math. Soc.}, 143:1--54, 1969.

\bibitem[GAP22]{GAP}
The GAP~Group.
\newblock {\em {GAP -- Groups, Algorithms, and Programming, Version 4.12.2}},
  2022.

\bibitem[Gec03]{Geck03}
M.~Geck.
\newblock Character values, {S}chur indices and character sheaves.
\newblock {\em Represent. Theory}, 7:19--55, February 2003.

\bibitem[GM20]{GM20}
M.~Geck and G.~Malle. 
\emph{The Character Theory of Finite Groups of Lie
  Type: A Guided Tour}. Cambridge University Press, Cambridge, 2020.

\bibitem[GRSS20]{GRSS}
E.~Giannelli, N.~Rizo, B.~Sambale, and A.~A. {Schaeffer~Fry}.
\newblock Groups with few $p'$-character degrees in the principal block.
\newblock {\em Proc. Amer. Math. Soc.}, 148(11):4597–4614, 2020.


\bibitem[GRS20]{GRS}
E.~Giannelli, N.~Rizo, and A.~A. {Schaeffer Fry}.
\newblock Groups with few $p'$-character degrees.
\newblock {\em J. Pure Appl. Algebra}, 224(8):106338, 2020.


\bibitem[GLS98]{GLS}
D.~Gorenstein, R.~Lyons, and R.~Solomon.
\newblock {\em The classification of the finite simple groups. {N}umber 3.
  {P}art {I}. {C}hapter {A}}, volume~40 of {\em Mathematical Surveys and
  Monographs}.
\newblock American Mathematical Society, Providence, RI, 1998.


\bibitem[HK00]{Het-Kul00}
L.~H\'{e}thelyi and B.~K\"{u}lshammer.
\newblock On the number of conjugacy classes of a finite solvable group.
\newblock {\em Bull. London Math. Soc.}, 32(6):668--672, 2000.

\bibitem[HK03]{Het-Kul03}
L.~H\'ethelyi and B.~K\"{u}lshammer.
\newblock On the number of conjugacy classes of a finite solvable group. {II}.
\newblock {\em J. Algebra}, 270(2):660--669, 2003.


\bibitem[His90]{hiss90}
G.~Hiss.
\newblock Regular and semisimple blocks of finite reductive groups.
\newblock {\em J. London Math. Soc. (2)}, 41(1):63--68, 1990.

\bibitem[HMM22]{Hun-Mal-Mar22}
N.~N. Hung, G.~Malle, and A.~Mar\'{o}ti.
\newblock On almost {$p$}-rational characters of {$p'$}-degree.
\newblock {\em Forum Math.}, 34(6):1475--1496, 2022.

\bibitem[HS23]{Hun-Sch23}
N.~N. Hung and A.~A. {Schaeffer Fry}.
\newblock On {H}\'{e}thelyi-{K}\"{u}lshammer's conjecture for principal blocks.
\newblock {\em Algebra Number Theory}, 17(6):1127--1151, 2023.

\bibitem[HSV23]{Hun-Sch-Val23}
N.~N. Hung, A.~A. {Schaeffer Fry}, and C.~Vallejo.
\newblock Height zero characters in principal blocks.
\newblock {\em J. Algebra}, 622:197--219, 2023.

\bibitem[HB82]{Hup-Bla82}
B.~Huppert and N.~Blackburn.
\newblock {\em Finite groups. {II}}, volume 242 of {\em Grundlehren der
  Mathematischen Wissenschaften}.
\newblock Springer-Verlag, Berlin-New York, 1982.
\newblock AMD, 44.


\bibitem[Isa08]{Isa08}
I.~M. Isaacs.
\newblock {\em Finite Group Theory}, volume 92 of {\em Graduate Studies in Mathematics}
\newblock American Mathematical Society, Providence, RI, 2008.
\newblock

\bibitem[IN02]{Isa-Nav02}
I.~M. Isaacs and G.~Navarro.
\newblock New refinements of the {M}c{K}ay conjecture for arbitrary finite
  groups.
\newblock {\em Ann. of Math. (2)}, 156(1):333--344, 2002.

\bibitem[Joh22]{birtethesis}
B.~Johansson.
\newblock {\em The inductive {M}c{K}ay--{N}avarro condition for finite groups
  of {L}ie type}.
\newblock Dissertation, TU Kaiserslautern, 2022.

\bibitem[KM13]{KM13}
R.~Kessar and G.~Malle.
\newblock Quasi-isolated blocks and {B}rauer's height zero conjecture.
\newblock {\em Ann. of Math. (2)}, 178(1):321--384, 2013.

\bibitem[KS16]{Kos-Spa16}
S.~Koshitani and B.~Sp{\"a}th.
\newblock The inductive {A}lperin-{M}c{K}ay and blockwise {A}lperin weight
  conditions for blocks with cyclic defect groups and odd primes.
\newblock {\em J. Group Theory}, 19(5):777--813, 2016.

\bibitem[Lus88]{lus88}
G.~Lusztig.
\newblock On the representations of reductive groups with disconnected centre.
\newblock {\em Orbites unipotentes et rep{\'e}sentations, I. Ast{\'e}risque},
  168:157--166, 1988.

\bibitem[Mal90]{Malle90}
G.~Malle.
\newblock Die unipotenten Charaktere von $\tw{2}\type{F}_4(q^2)$.
\newblock {\em Comm. Algebra}, 18:
2361--2381, 1990.

\bibitem[Mal07]{Malle07}
G.~Malle.
\newblock Height 0 characters of finite groups of {L}ie type.
\newblock {\em Represent. Theory}, 11:192--220, 2007.

\bibitem[Mal08]{Malle08}
G.~Malle.
\newblock Extensions of unipotent characters and the inductive {M}c{K}ay
  condition.
\newblock {\em J. Algebra}, 320(7):2963--2980, 2008.

\bibitem[Mal14]{Malle14}
G.~Malle.
\newblock On the inductive Alperin--McKay and Alperin weight conjecture for groups with abelian Sylow subgroups.
\newblock {\em J. Algebra}, 397:190--208, 2014.


\bibitem[MT11]{MT11}
G.~Malle and D.~Testerman.
\newblock {\em Linear Algebraic Groups and Finite Groups of Lie Type}.
\newblock Cambridge Studies in Advanced Mathematics. Cambridge University
  Press, Cambridge, 2011.
  
\bibitem[Mar16]{Mar16}
A.~Mar\'{o}ti.
\newblock A lower bound for the number of conjugacy classes of a finite group.
\newblock {\em Adv. Math.}, 290:1062--1078, 2016.

\bibitem[Nav98]{Nav98}
G.~Navarro.
\newblock {\em Characters and blocks of finite groups}, volume 250 of {\em
  London Mathematical Society Lecture Note Series}.
\newblock Cambridge University Press, Cambridge, 1998.

\bibitem[Nav04]{Nav04}
G.~Navarro.
\newblock The {M}c{K}ay conjecture and {G}alois automorphisms.
\newblock {\em Ann. of Math. (2)}, 160(3):1129--1140, 2004.

\bibitem[Nav18]{Nav18}
G.~Navarro.
\newblock {\em Character theory and the {M}c{K}ay conjecture}, volume 175 of
  {\em Cambridge Studies in Advanced Mathematics}.
\newblock Cambridge University Press, Cambridge, 2018.

\bibitem[Nav23]{Nav23}
G.~Navarro.
\newblock Problems on characters: solvable groups.
\newblock {\em Publ. Mat.}, 67(1):173--198, 2023.

\bibitem[NR24]{Nav-Riz}
G.~Navarro, N.~Rizo.
\newblock  $p$-rational height zero characters in blocks.
\newblock  {\em in preparation}, 2024.

\bibitem[NS14]{Nav-Spa14}
G.~Navarro and B.~Sp{\"a}th.
\newblock On {B}rauer's height zero conjecture.
\newblock {\em J. Eur. Math. Soc. (JEMS)}, 16(4):695--747, 2014.

\bibitem[NT19]{Nav-Tie19}
G.~Navarro and P.~H. Tiep.
\newblock Sylow subgroups, exponents, and character values.
\newblock {\em Trans. Amer. Math. Soc.}, 372(6):4263--4291, 2019.

\bibitem[NT21]{Nav-Tie21}
G.~Navarro and P.~H. Tiep.
\newblock The fields of values of characters of degree not divisible by {$p$}.
\newblock {\em Forum Math. Pi}, 9:Paper No. e2, 28, 2021.

\bibitem[PP98]{Pal-Pyb98}
P.~P. P\'{a}lfy and L.~Pyber.
\newblock Small groups of automorphisms.
\newblock {\em Bull. London Math. Soc.}, 30(4):386--390, 1998.

\bibitem[Rey63]{Rey63}
W.~F. Reynolds.
\newblock Blocks and normal subgroups of finite groups.
\newblock {\em Nagoya Math. J.}, 22:15--32, 1963.

\bibitem[RSV20]{Riz-Sch-Val20}
N.~Rizo, A.~A. {Schaeffer Fry}, and C.~Vallejo.
\newblock Galois action on the principal block and cyclic {S}ylow subgroups.
\newblock {\em Algebra Number Theory}, 14(7):1953--1979, 2020.

\bibitem[RSV21]{RSV21}
N.~Rizo, A.~A. {Schaeffer~Fry}, and C.~Vallejo.
\newblock Principal blocks with 5 irreducible characters.
\newblock {\em J. Algebra}, 585:316--337, 2021.


\bibitem[RS22]{RSF22}
L.~Ruhstorfer and A.~A. {Schaeffer~Fry}.
\newblock The inductive {M}c{K}ay--{N}avarro conditions for the prime $2$ and
  some groups of {L}ie type.
\newblock {\em Proc. Amer. Math. Soc. Ser. B}, 9:204--220, 2022.

\bibitem[ST18]{SFT18}
A.~A. {Schaeffer~Fry} and J.~Taylor.
\newblock On self-normalising {S}ylow $2$-subgroups in type ${A}$.
\newblock {\em J. Lie Theory}, 28(1):139--168, 2018.

\bibitem[Sch92]{schmid}
P.~Schmid.
\newblock Extending the {S}teinberg representation.
\newblock {\em J. Algebra}, 150:254--256, 1992.

\bibitem[Sch07]{Sch07}
P.~Schmid.
\newblock {\em The solution of the {$k(GV)$} problem}, volume~4 of {\em ICP
  Advanced Texts in Mathematics}.
\newblock Imperial College Press, London, 2007.


\bibitem[Sea87]{Sea87}
S.~M. Seager.
\newblock The rank of a finite primitive solvable permutation group.
\newblock {\em J. Algebra}, 105(2):389--394, 1987.

\bibitem[Sp{\"a}13]{Spa13}
B.~Sp{\"a}th.
\newblock A reduction theorem for the {A}lperin-{M}c{K}ay conjecture.
\newblock {\em J. Reine Angew. Math.}, 680:153--189, 2013.

\bibitem[Tay18]{tay18}
J.~Taylor.
\newblock Action of automorphisms on irreducible characters of symplectic
  groups.
\newblock {\em J. Algebra}, 505:211--246, 2018.

\bibitem[TZ04]{TZ04}
P.~H. Tiep and A.~E. Zalesskii.
\newblock Unipotent elements of finite groups of {L}ie type and realization
  fields of their complex representations.
\newblock {\em J. Algebra}, 271:327--390, 2004.

\bibitem[VR23]{Val23}
C.~Vallejo~Rodr{\'i}guez.
\newblock A lower bound on the number of generators of a defect group.
\newblock {\em Vietnam J. Math.}, 51(3):571--576, 2023.

\bibitem[VLS07]{Ver-San07}
A.~Vera~L\'{o}pez and J.~Sangroniz.
\newblock The finite groups with thirteen and fourteen conjugacy classes.
\newblock {\em Math. Nachr.}, 280(5-6):676--694, 2007.

\bibitem[War66]{Ward}
H.~N.~Ward.
\newblock On Ree’s series of simple groups.
\newblock {\em Trans. Amer. Math. Soc.} 121:62--89, 1966.

\end{thebibliography}

\small{
\vspace{1cm}

(A. Mar\'oti) {\sc{Alfr\'ed R\'enyi Institute of Mathematics, H-1053 Budapest, Hungary}}

\textit{Email address:} \href{mailto:maroti.attila@renyi.hu}{\texttt{maroti.attila@renyi.hu}}

\medskip

(J. M. Mart\'inez) {\sc{Dipartimento di Matematica e Informatica ‘Ulisse Dini’, 50134 Firenze, Italy}}

\textit{Email address:} \href{mailto:josepmiquel.martinezmarin@unifi.it}{\texttt{josepmiquel.martinezmarin@unifi.it}}

\medskip

(A. A. Schaeffer Fry) {\sc{ (primary) Dept. Mathematics - University of Denver, Denver, CO 80210, USA; and 
Dept. Mathematics and Statistics - MSU Denver, Denver, CO 80217, USA}}

\textit{Email address:} \href{mailto:mandi.schaefferfry@du.edu}{\texttt{mandi.schaefferfry@du.edu}}

\medskip

(C. Vallejo) {\sc{Dipartimento di Matematica e Informatica ‘Ulisse Dini’, 50134 Firenze, Italy}}

\textit{Email address:} \href{mailto:arolina.vallejorodriguez@unifi.it}{\texttt{carolina.vallejorodriguez@unifi.it}}

\end{document}